              \def\version{13 August 2018}		        	%
\numberwithin{equation}{section}
\newcommand{\ProofEnde}{\hfill {$\square$}
}%{$\square$} 
\DeclareFontFamily{U}{mathx}{\hyphenchar\font45}
\DeclareFontShape{U}{mathx}{m}{n}{
      <5> <6> <7> <8> <9> <10>
      <10.95> <12> <14.4> <17.28> <20.74> <24.88>
      mathx10
      }{}
\DeclareSymbolFont{mathx}{U}{mathx}{m}{n}
\DeclareMathSymbol{\bigtimes}{1}{mathx}{"91}
\def\emptyset{\varnothing} 
\def\d{{\rm d}} 
\def\e{\varepsilon} 
\font\tenBbb=msbm10 
\font\sevenBbb=msbm7 
\font\fiveBbb=msbm5 
\def\mmax{m_{\max}}
\newcommand{\R}     {\mathbb{R}} 
\newcommand{\Z}     {\mathbb{Z}} 
\newcommand{\N}     {\mathbb{N}} 
\renewcommand{\P}   {\mathbb{P}}
\newcommand{\smfrac}[2]{\textstyle{\frac {#1}{#2}}}
\def\1{{\mathchoice {1\mskip-4mu\mathrm l}      % Blackboard bold 1 
{1\mskip-4mu\mathrm l} 
{1\mskip-4.5mu\mathrm l} {1\mskip-5mu\mathrm l}}} 
\newcommand{\ssup}[1] {{{\scriptscriptstyle{({#1}})}}} 
\def\comment#1{} 
\newtheoremstyle{thm}{2ex}{2ex}{\itshape\rmfamily}{} 
{\bfseries\rmfamily}{}{1.7ex}{} 
\newtheoremstyle{rem}{1.3ex}{1.3ex}{\rmfamily}{} 
{\itshape\rmfamily}{}{1.5ex}{}
\renewcommand{\theequation}{\thesection.\arabic{equation}} 
\newtheorem{theorem}{Theorem}[section] 
\newtheorem{lemma}[theorem]{Lemma} 
\newtheorem{prop}[theorem] {Proposition} 
\newtheorem{remark}[theorem]  {Remark} 
\newtheorem{defn}[theorem] {Definition}
\theoremstyle{definition}
\renewcommand{\d}{{\rm d}} 
\newcommand{\eps}{\varepsilon} 
\newcommand{\Leb}{{\rm Leb}}
\newcommand{\supp}{{\operatorname {supp}}}
\newcommand{\SIR}{\mathrm{SIR}}
\newcommand{\tensor}{\otimes}
\def\supp{\mathrm{supp}}
\def\kmax{k_{\max}}
\newcommand{\Acal}  {{\mathcal A}}
\newcommand{\Fcal}   {{\mathcal F }} 
\newcommand{\Hcal}   {{\mathcal H }}
\newcommand{\Mcal}   {{\mathcal M }} 
\newcommand{\Ocal}   {{\mathcal O }}
\newcommand{\Scal}   {{\mathcal S }}
\newcommand\numberthis{\addtocounter{equation}{1}\tag{\theequation}}
\renewcommand{\e}   {{\operatorname e }}
\definecolor{Red}{rgb}{1,0,0}
\begin{document} 
 
\title[A Gibbsian model for message routeing]{A Gibbsian model for message routeing \\ in highly dense multihop networks}
\author[Wolfgang König and András Tóbiás]{}
\maketitle
\thispagestyle{empty}
\vspace{-0.5cm}

\centerline{{\sc Wolfgang K\"onig\footnote{TU Berlin, Straße des 17. Juni 136, 10623 Berlin, and WIAS Berlin, Mohrenstra{\ss}e 39, 10117 Berlin, {\tt koenig@wias-berlin.de}}} and {\sc András Tóbiás\footnote{Berlin Mathematical School, TU Berlin, Straße des 17. Juni 136, 10623 Berlin, {\tt tobias@math.tu-berlin.de}}}}
\renewcommand{\thefootnote}{}
\vspace{0.5cm}
\centerline{\textit{WIAS Berlin and TU Berlin, and TU Berlin}}

\bigskip
%\centerline{27 October 2017}
\centerline{\small(\version)} 
\vspace{.5cm} 
 
\begin{quote} 
{\small {\bf Abstract:}} 
We investigate a probabilistic model for routeing of messages in relay-augmented multihop ad-hoc networks, where each transmitter sends one message to the origin. Given the (random) transmitter locations, we weight the family of random, uniformly distributed message trajectories by an exponential probability weight, favouring trajectories with low interference (measured in terms of signal-to-interference ratio) and trajectory families with little congestion (measured in terms of the number of pairs of hops using the same relay). Under the resulting Gibbs measure, the system targets the best compromise between entropy, interference and congestion for a common welfare, instead of an optimization of the individual trajectories. 

In the limit of high spatial density of users, we describe the totality of all the message trajectories in terms of empirical measures. Employing large deviations arguments, we derive a characteristic variational formula for the limiting free energy and analyse the  minimizer(s) of the formula, which describe the most likely shapes of the trajectory flow. The empirical measures of the message trajectories well describe the interference, but not the congestion; the latter requires introducing an additional empirical measure. Our results remain valid under replacing the two penalization terms by more general functionals of these two empirical measures.

\end{quote}

%\vfill

\bigskip\noindent 
{\it MSC 2010.} 60F10, 60G55, 60K30; 82B21; 90B15

\medskip\noindent
{\it Keywords and phrases.} Gibbs distribution of trajectories, high-density limit, large deviations, empirical measure, number of incoming hops, weak topology, vari\-ational formula, minimizer, multihop ad-hoc network, message trajectories, signal-to-interference ratio,  congestion
%\eject 

\setcounter{tocdepth}{3}

%\tableofcontents

\setcounter{section}{0}

%%%thankstext:
%\thankstext{a}{The second author acknowledges support by the Berlin Mathematical School (BMS) and the DFG RTG 1845 \lq Stochastic Analysis with Applications in Biology, Finance and Physics\rq.}
 %%%

\section{Introduction and main results}\label{sec-Intro} 

\subsection{Background and main goals}\label{sec-background}
In random networks, one of the prominent problems is the question how to conduct a message through the system in an optimal way. Optimality is often measured in terms of determining the shortest path from the transmitter to the recipient, or, if interference is considered, determining the path that yields the least interference. If many messages are considered at the same time, an additional aspect of optimality may be to achieve a minimal amount of congestion. %These are problems of optimal routeing through a network, a subject of mathematical traffic theory or optimization that is currently very popular and under demand. 

Many investigations concern the question just for one single transmitter/recipient pair, which is a question that every single participant faces. However, a strategy found in such a setting may lead to a selfish routeing, and it is quite likely that the totality of all these routeings 
for all the individuals is by far not optimal for the community of all the users \cite[Section 1]{CCS16}. %Furthermore, the combinatorial or algorithmic efforts required to find all these optimal routeings may be huge. 
Instead, the entire system may work even better if an optimal {\em compromise} is realized, by which we mean a joint strategy that leads to an optimum for the entire system, though possibly not for every participant. %An additional benefit might be that it follows simple rules that are easy to implement and computationally little costly. 

In this paper, we present a probabilistic ansatz for describing a jointly optimal routeing for an unbounded number of transmitter/recipient pairs, which takes into account the following three crucial properties of the family of message trajectories: entropy (i.e., counting complexity of the number of trajectory families satisfying certain properties), interference and congestion. That is, we consider a situation in which all the messages are directed through the system in a random way, such that each hop prefers a low interference, and such that the total amount of congestion is preferred to be low. Parameters control the strengths of influence of the three effects.

Let us describe our model in words. Let the users be located randomly as the sites of a Poisson point process, which we fix. Each user sends out precisely one message, which arrives at the (unique) base station, which is located at the origin. We consider the entire collection of possible trajectories of the messages through the system. We employ an ad-hoc relaying system with multiple hops, such that all the users act as relays for the handoffs of the messages. The maximal number of hops is $\kmax\in\N$ for each message. Each $k$-hop message trajectory (with $k\in\{1,\dots,\kmax\}$ itself random) is random and {\em a priori} uniformly distributed. The family of all trajectories is {\em a priori} independent. 

Now, the probability distribution of this family is given in terms of a Gibbs ansatz by introducing two exponential weight terms. (That is, we define a quenched measure on trajectories given the locations of the users.) The first one weights the total amount of interference, measured in terms of the signal-to-interference ratio for each hop. The second one weights the total congestion, i.e., the number of times that any two trajectories use the same relay. Under the arising measure,  there is a competition between all the three decisive effects of the trajectory family: entropy, interference and congestion. Furthermore, the users form a random environment for the family, which not only determines the starting sites of all the trajectories, but also has a decisive effect on interference and congestion. While the latter has a smoothing effect on the fine details of the spatial distribution of all the trajectories, the effect of the former is not so clear to predict, as the superposition of signals have a 
very non-local influence. %Formally, the interference term and the congestion term may be replaced by a large class of other functionals exhibiting similar mathematical properties, and this is the general framework in which we will derive our results.
%The Gibbs distribution describes an idealized situation in which the operator distributes all the message trajectories uniformly randomly and jointly optimizes the interference and the congestion of the entire system at the same time. 

Our main interest is in understanding the spatial distribution of the totality of all the message trajectories under the Gibbs distribution.
The measure under consideration is a highly complex object, as it depends on all the user locations and on many detailed properties and quantities. However, we make a substantial step towards a thorough understanding by deriving an asymptotic formula for the logarithmic behaviour of the normalization constant in the limit of a high spatial density of the users. The limiting situation is then described in terms of a large deviations rate function and a variational formula, whose minimizers describe the optimal joint choices of the trajectories. This formula is deterministic and depends only on general spatial considerations, not on the individual users. These are our main results in this paper.

The main objects in terms of which we achieve this description are the empirical measures of the  trajectories of the messages sent out by the users, disintegrated with respect to the lengths and rescaled to finite asymptotic size. These measures turn out to converge in the weak topology in the high-density limit that we consider in this paper. The counting complexity of the statistics of the message trajectories can be written in terms of multinomial expressions and afterwards, in the limit of finer and finer decompositions of the space, approximated in terms of relative entropies, using  Stirling's formula. The interference term can also be handled in a standard way \cite{HJKP15}, since it is a continuous function of the collection of empirical measures of message trajectories.

However, a key finding of our work is that the congestion term is a highly discontinuous function of these empirical measures. Indeed, one cannot express  its limiting behaviour in terms of these measures. Instead, one needs to substantially enlarge the probability space of trajectories and to introduce another collection of empirical measures, the ones of the locations of users (relays) who receive given numbers of incoming messages (counted with multiplicity if a message trajectory hits the same relay multiple times). The congestion expression then turns out to  be a lower semicontinuous function of these empirical measures, and hence the limiting congestion term is still expressible in terms of the weak limits of these measures. Again, using explicit combinatorics and Stirling's formula, we arrive at explicit entropic terms describing the statistics of these measures. These two families of empirical measures together enable us to describe all the properties of the message trajectories that we are 
interested in. We establish a full large deviation principle for the tuple of all 
these measures with an 
explicit rate function and obtain in particular their convergence towards the minimizer(s) of a characteristic variational formula. We also derive their positivity properties and characterize them in terms of Euler-Lagrange equations. Unfortunately, due to the complexity of the congestion term, we are not able to decide about the uniqueness of the minimizer.

Nevertheless, in the special case when congestion is not penalized, the minimizer turns out to be unique,  and we obtain an explicit expression that is amenable to further investigation.  In certain limiting regimes, we can derive a good understanding of decisive quantities of the system, like the typical number of hops, the typical length of a hop and the typical shape of a trajectory as a function of the distance between the transmission site and the origin. We expect that such properties of the system are similar if congestion is also penalized, as the effect of congestion is {\em a priori} not spatial, but combinatorial. We decided to analyse such questions in a separate work \cite{KT17b}, as they have a strongly analytic, rather than probabilistic, nature. The present paper includes a short summary of the results of \cite{KT17b}.

The main purpose of the present paper is to provide the mathematical framework of large deviations for the quenched trajectory distribution, given the user locations, in the high-density limit. We also provide a discussion of the relevance of the model for telecommunication theory. A connection of our work to traffic theory is outlined in \cite{KT17b}, which paper also includes numerical examples for the case when congestion is not penalized. Hence, in the present paper, we will formulate the model in a more general, slightly abstract, way in order to bring its mathematical essence to the surface. That is, we consider a random complete geometric graph in a compact subset of $\R^d$ (where the vertices are the users and the edges are the straight line segments between any two users), and a distribution of trajectories that has an interaction (the interference) with all the locations of the nodes and suppresses local clumping (the congestion).

\subsubsection{The high-density limit for multihop networks}\label{sec-Discussion-highdensity}
The quality of service in large multihop ad-hoc networks has received particular interest in the last years. In order to be able to derive a clear picture, one has to make a certain approximation in limiting settings. Two mathematical settings are frequently used: the high-density limit (sometimes called also a hydrodynamic limit or a mean-field limit), where the number of users in a compact fixed area diverges, and the thermodynamic limit, where the area diverges as well, such that the number of users per space unit remains fixed. The former models a situation like at concerts, demonstrations or sports events, while the latter one models large-area networks with moderate user density.

A number of papers on this subject use large deviations methods. This has several advantages. Indeed, the corresponding large deviation principles often come with a law of large numbers for certain empirical measures, and with exponential bounds on the probabilities of deviations from the limit. This suggests that the qualitative behaviour of the network is close to the limit already for relatively moderate values of the diverging parameter. These methods lead in the high-density setting to much more handy formulas (see e.g.~\cite{HJKP15,HJP16,HJ17}) than in the thermodynamic limit (see e.g.~\cite{HJKP15a}). This is why we decided to analyse our Gibbsian model in the high-density setting.

\subsubsection{Related literature}
Apart from the potential value for understanding a new type of message routeing models in telecommunication, the present paper provides also some interesting mathematical research on topological fine properties of random paths in random a environment in a high-density setting, a subject that received a lot of interest for various types of such processes over the last decades. We remind the reader on a number of investigations of the intersection properties of random walks and Brownian motions (both self-intersections and mutual intersections) in highly dense settings, see the monograph \cite{Ch09} and some particular investigations in \cite{KM02,KM13}; in all these works, one is interested in large deviations properties of suitable empirical measures, and the lack of continuity of the path properties is the main difficulty. Let us mention that the main aspect of the approach in \cite{KM02} is the same as in the present paper: an approximation of combinatorics in 
finer and finer decompositions of the space by entropic terms. Another line of research in which similar questions arise is a mean-field variant of a spatial version of Bose-Einstein statistics, like in \cite{AK08}, where the statistics of the empirical measures of  a diverging number of Brownian bridges with symmetrized initial-terminal condition is analysed in terms of a large deviation principle in the weak topology. While \cite{AK08} works with the same method as we in the present paper (spatial discretization with limiting fineness), \cite{T08} showed that a method based entirely on the notion of entropy is able to derive such results in a more general framework.

\subsubsection{Organization of the remainder of this paper.} We introduce the model and necessary notation in Section~\ref{sec-modeldef}, present our main results in Sections~\ref{sec-freeenergy} (the limiting free energy of the model), \ref{sec-minimizerstatement} (the description of the minimizer(s)), \ref{sec-LDP} (the large deviation principle and the convergence of the empirical measures), and~\ref{sec-beta=0description} (results in case congestion is not penalized), we interpret the minimizer(s) and summarize the results of \cite{KT17b} about their qualitative properties in Section~\ref{Discussion-minimizers}, and we discuss and comment our findings in Section~\ref{sec-Discussion}. The remaining sections are devoted to the proofs: in Section~\ref{sec-proofingredients} we prepare for the proofs by introducing our methods and deriving formulas for the probability terms, in Section~\ref{sec-prooflargelambda} we put all this together to a proof of the limiting free energy, the large deviation principle and 
the convergence of the empirical measures, in Section~\ref{sec-minimizers} we analyse the minimizer(s) of the characteristic 
variational formula, 
and in Section~\ref{sec-beta=0proof} we extend the proofs to the case when congestion is not penalized. 

\subsection{The Gibbsian model}\label{sec-modeldef}

We introduce now the mathematical setting. For any topological space $V$, let $\mathcal{M}(V)$ denote the set of all finite nonnegative Borel measures on $V$, which we equip with the weak topology. We are working in $\R^d$ with some fixed $d\in\N$. %, $M(V)$ the set of finite signed Borel measures on $V$, and for $\alpha \geq 0$, $\mathcal{M}_\alpha(V) \subseteq \mathcal{M}(V)$ the set of Borel measures with total mass equal to $\alpha$. In particular, $\mathcal{M}_1(W)$ is the set of probability measures on $W$, and for $\alpha \geq 0$, $\mathcal{M}_\alpha(W)=\lbrace \alpha P \colon P \in \mathcal{M}_1(W) \rbrace$. 
Our model is defined as follows. Let $W\subset \mathbb R^d$ be compact, the territory of our telecommunication system, containing the origin $o$ of $\R^d$.  

\subsubsection{Users}
Let $\mu \in \mathcal{M}(W)$ be an absolutely continuous measure on $W$ with $\mu(W)>0$. Note that we do not require that $\supp(\mu)=W$. For $\lambda>0$, we denote by $X^\lambda$ a Poisson point process in $W$ with intensity measure $\lambda\mu$. The points $X_i \in X^\lambda$ are interpreted 
as the {\em locations of the users} in the system, while the origin $o$ of $\mathbb R^d$ is the single \emph{base station}. We assume that $X^\lambda=\lbrace X_i \colon i \in I^\lambda \rbrace$ with $I^\lambda=\{1,\dots,N(\lambda)\}$ and $(N(\lambda))_{\lambda>0}$ a homogeneous Poisson process on $\N_0$ with intensity $\mathbb E[N(1)]=\mu(W)$, and $(X_i)_{i \in \mathbb N}$ is an i.i.d.~sequence of $W$-distributed random variables with distribution ${\mu(\cdot)}/{\mu(W)}$ defined on one probability space $(\Omega,\mathcal F,\mathbb P)$. Since $\mu$ has a density, all points $X_i$ are mutually different with probability one. Further, $X^\lambda$ is increasing in $\lambda$, and its {\em empirical measure}, normalized by $1/\lambda$,
\begin{equation}\label{userempirical}
L_\lambda=\frac{1}{\lambda} \sum_{i \in I^\lambda} \delta_{X_i} ,
\end{equation}
converges towards $\mu$ almost surely as $\lambda \to \infty$.

These assumptions on the users can be relaxed, see Section \ref{sec-poisson}.

\subsubsection{Message trajectories}\label{modeldef-messagetraj}

We now introduce the collection of trajectories sent out from the users to $o$, i.e., for uplink communication. For any $i \in I^\lambda$, we call a vector of the form
\[ S^i= (S_{-1}^i=K_i,S_0^i=X_i, S_1^i \in X^\lambda,\ldots,S_{K_i-1}^i \in X^\lambda, S_{K_i}^i =o)\in \bigcup_{k\in\N} \Big( \{ k \} \times \{ X_i \} \times W^{k-1} \times\{ o\} \Big) , \numberthis\label{trajectoryindices} \]
a {\em message trajectory} from $X_i$ to $o$ with $K_i$ hops. That is, $S^i$ starts from $X_i$ and ends in $o$ after $K_i$ hops from user to user in $X^\lambda$. Hence, each user sends exactly one message to $o$, and each user has the function of a relay. We fix a number $\kmax \in\N$ and write $\mathcal{S}_{\kmax}^i(X^\lambda)$ for the set of all possible realizations of  the random variable $S^i$ with $K_i\leq \kmax$, i.e., with no more than $\kmax$ hops. Hence, elements $s^i=(s^i_{-1},s^i_0,s^i_1,\ldots,s^i_{s^i_{-1}-1},s^i_{s^i_{-1}})$ of $\mathcal{S}_{\kmax}^i(X^\lambda)$ satisfy $s^i_{-1}\in\{1,\dots,\kmax\}$, $s^i_0=X_i$ and $s^i_{s^i_{-1}}=o$. We write $\mathcal{S}_{\kmax}(X^\lambda)=\bigtimes_{i\in I^\lambda} \mathcal{S}_{\kmax}^i(X^\lambda)$ for the set of all possible realizations of the families $S= (S^i)_{i\in I^\lambda}$. We use the notation $[k]=\lbrace 1,\ldots,k \rbrace$ for $k\in\N$. The assumption that we choose a finite upper bound $\kmax$ on the number of hops will be discussed 
in Section~\ref{sec-kmaxdiscussion}.

Given $i\in I^\lambda$, we consider each trajectory $S^i$ in \eqref{trajectoryindices} as an $\Scal_{\kmax}^i(X^\lambda)$-valued random variable. Its {\em a priori} measure is defined by the formula
\begin{equation}\label{referencemeasure}
s^i\mapsto \frac1{N(\lambda)^{s_{-1}^i-1}},\qquad s^i\in \Scal_{\kmax}^i(X^\lambda).
\end{equation}
That is, its restriction to $\{ s^i \in \mathcal{S}^i_{\kmax}(X^\lambda) \colon s^i_{-1}=k\}$ is the uniform distribution on the set of all $k$-hop trajectories from $X_i$ to $o$ for any $k \in [\kmax]$, and its total mass is equal to $\kmax$. Recall that it fixes the starting point $X_i$  and the terminal point $o$.

Under our joint {\em a priori} measure, all the trajectories are independent; indeed, it gives the value
\begin{equation}\label{referenceprod}
s= (s^i)_{i\in I^\lambda}\mapsto \prod_{i\in I^\lambda}  \frac1{N(\lambda)^{s_{-1}^i-1}}
\end{equation}
to the configuration $s \in \mathcal{S}_{\kmax}(X^\lambda)$. Thus, it gives a total mass of $\kmax^{N(\lambda)}$ to $\mathcal{S}_{\kmax}(X^\lambda)$.

\subsubsection{Gibbsian trajectory distribution}\label{sec-Gibbsdistribution}

In this section, we define the central object of this study: a Gibbs distribution on the set $\mathcal{S}_{\kmax}(X^\lambda)$ of collections of trajectories. After providing the abstract definitions, in Section \ref{sec-interference&congestion} we sketch the key example that is relevant for application in telecommunication. The general conditions on the ingredients of the Gibbs distribution in this section arise naturally from the properties of this example.

We introduce the following notation. For $k\in\N$, elements of the product space $W^k=W^{\{0,1,\dots,k-1\}}$ are denoted as $(x_0,\ldots,x_{k-1})$. For $l=0,\ldots,k-1$, the $l$-th marginal of a measure $\nu_k \in \mathcal{M}(W^k)$ is denoted by $\pi_l\nu_k\in \mathcal{M}(W)$, i.e., $\pi_l \nu_k(A)=\nu_k(W^{\{0,\dots,l-1\}}\times A\times W^{\{l+1,\dots,k-1\}})$ for any Borel set $A \subseteq W$.

For fixed $k \in [\kmax]$ and for a collection of trajectories $s \in \mathcal{S}_{\kmax}(X^\lambda)$, we define
\begin{equation}\label{Rdef}
R_{\lambda,k}(s)=\frac{1}{\lambda} \sum_{i \in I^\lambda\colon s^i_{-1}=k} \delta_{(s_0^i,\ldots,s_{k-1}^i)},
\end{equation}
the empirical measures of all the $k$-hop trajectories, which is an element of $\mathcal{M}(W^k)$. By the assumption that each user sends out exactly one message, we have \[ \sum_{k=1}^{\kmax} \pi_0 R_{\lambda,k}(s) =L_\lambda. \numberthis\label{idiscrete} \]
For $k \in [\kmax]$, we choose a continuous function $f_k \colon \mathcal M(W) \times W^{k} \to \R$ that is bounded from below. Using \eqref{idiscrete}, we put
\[ \mathfrak S(s)= %\lambda \sum_{k=1}^{\kmax} \Big\langle f_k(\sum_{\widetilde k=1}^{\kmax} \pi_0 R_{\lambda,\widetilde k},\cdot), R_{\lambda,k}(s)(\cdot))\Big\rangle = 
\lambda \sum_{k=1}^{\kmax} \Big\langle R_{\lambda,k}(s)(\cdot),  f_k(L_\lambda,\cdot)\Big\rangle
=\sum_{k=1}^{\kmax} \sum_{i\in I^\lambda\colon s^i_{-1}=k}f_k(L_\lambda,s^i_0,\dots,s^i_{k-1}), \numberthis\label{SIRenergy} \] 
where we write $\langle \nu, f \rangle$ for the integral of the function $f$ against the measure $\nu$.
Moreover, we define
\begin{equation}\label{midef}
m_i(s)=\sum_{j \in I^\lambda} \sum_{l=1}^{s^j_{-1}-1} \mathds 1 \lbrace s^j_l=s^i_0 \rbrace, \quad i\in I^\lambda,
\end{equation}
as the number of incoming hops into the user (relay) $s^i_0=X_i$ of any of the trajectories. 

We pick a function $\eta \colon \N_0 \to \R$, bounded from below such that $\lim_{m \to \infty} \eta(m)/m = \infty$. Then we put
\[ \mathfrak M (s) = \sum_{i \in I^\lambda} \eta(m_i(s)). 
\numberthis\label{Menergy} \]
Now, we define
\begin{equation}\label{Gibbsdistribution}
\mathrm P^{\gamma,\beta}_{\lambda,X^\lambda}(s) := \frac1{Z^{\gamma,\beta}_\lambda(X^\lambda)}
\Big(\prod_{i\in I^\lambda}  \frac1{N(\lambda)^{s_{-1}^i-1}}\Big) \exp\Big\{-\gamma \mathfrak S(s)-\beta\mathfrak M(s)\Big\},
\end{equation}
where $\gamma>0$ and $\beta>0$ are parameters. This is the Gibbs distribution with independent reference measure given in \eqref{referenceprod}, subject to two exponential weights with the terms \eqref{SIRenergy} and \eqref{Menergy}. Here
\[ Z^{\gamma,\beta}_\lambda(X^\lambda)=\sum_{r \in \mathcal{S}_{\kmax}(X^\lambda)}  \Big(\prod_{i\in I^\lambda}\frac1{N(\lambda)^{ r^i_{-1}-1}}\Big) \exp\Big\{-\gamma \mathfrak S(r)-\beta\mathfrak M(r)\Big\} \numberthis\label{zed} \]
is the normalizing constant, which we will refer to as \emph{partition function}. Note that $\mathrm P^{\gamma,\beta}_{\lambda, X^\lambda}(\cdot)$ is random conditional on $X^\lambda$, and it is a probability measure on $\mathcal{S}_{\kmax}(X^\lambda)$. In the jargon of statistical mechanics, it is a {\em quenched} measure, which we will consider almost surely with respect to the process $(X^\lambda)_{\lambda>0}$. In the {\em annealed} setting, one would average out over $(X^\lambda)_{\lambda>0}$, see Section \ref{sec-annealed}.

\subsubsection{The key example: penalization of interference and congestion} \label{sec-interference&congestion}
In this section, we sketch the most important example for the exponents $\mathfrak S$ and $\mathfrak M$ in \eqref{Gibbsdistribution}, where $\mathfrak S$ registers interference and $\mathfrak M$ expresses congestion in a telecommunication network. Analysing the qualitative properties of the network with these choices of $\mathfrak M$ and $\mathfrak S$ is the main topic of our accompanying paper \cite{KT17b}, see Section~\ref{sec-furtherresults}.

Now we introduce interference. We choose a \emph{path-loss function}, which describes the propagation of signal strength over distance. This is a monotone decreasing, continuous function $\ell\colon [0,\infty) \to (0,\infty)$. %Since $\ell$ is continuous and $W$ compact, $\ellmax=\max_{x,y \in W} \ell(|x-y|)$ is finite and $\ellmin=\min_{x,y \in W} \ell(|x-y|)$ is positive. 
An example used in practice is $\ell(r)=\min \lbrace 1, r^{-\alpha} \rbrace$, for some $\alpha>0$, see e.g.~\cite[Section II.]{GT08}, for further examples see \cite[Section 2.3.1]{BB09}. The \emph{signal-to-interference ratio (SIR)} of a transmission from $X_i \in X^\lambda$ to $x \in W$ in the presence of the users in $X^\lambda$ is given as
\[ \SIR(X_i,x,L_\lambda)= \frac{\ell(|X_i-x|)}{\frac{1}{\lambda} \sum_{j \in I^\lambda} \ell(|X_j-x|)}. \numberthis\label{SIR} \]
We call the denominator of the r.h.s\ of \eqref{SIR} the \emph{interference} at $x$. The definition \eqref{SIR} comes from \cite{HJKP15}. It is adapted to the high-density setting, and it differs from the usual definition \cite{GK00} of $\SIR$ in the following way. The sum in the interference in \eqref{SIR} is multiplied by $1/\lambda$, and it contains also the term $\ell(|X_i-x|)$. For a justification of these differences, see \cite[Section 6.1.1]{KT17b}. 

Now, given a trajectory configuration $s=(s^i)_{i \in I^\lambda}\in \Scal_{\kmax}(X^\lambda)$, we put 
\begin{equation} \label{realSIRenergy}
\mathfrak S(s)=\sum_{i\in I^\lambda}\sum_{l=1}^{s^i_{-1}} \SIR(s^i_{l-1},s^i_l,L_\lambda)^{-1}=\lambda \sum_{k=1}^{\kmax}  \int_{W^k} R_{\lambda,k}(s)(\d x_0,\ldots,\d x_{k-1}) \sum_{l=1}^k\frac{\int_W \ell(|y-x_l|) L_\lambda(\d y)}{\ell(|x_{l-1}-x_l|)},
\end{equation}
where for $k \in [\kmax]$, we write $x_k=o$. Then \eqref{realSIRenergy} is a special case of \eqref{SIRenergy} with \[ f_k(\nu,x_0,\ldots,x_{k-1}) = \sum_{l=1}^k \frac{\int_{W} \ell(|y-x_l|)\nu(\d y)}{\ell(|x_{l-1}-x_l|)}, \quad x_k=o, k \in [\kmax]. \numberthis\label{fkspecial} \]

Next, we introduce congestion. We define $\eta(m)=m(m-1)$, and, as in \eqref{Menergy}, we put
\[ \mathfrak M(s) = \sum_{i \in I^\lambda} \eta(m_i(s)) = \sum_{i \in I^\lambda} m_i(s)(m_i(s)-1), \quad s \in \Scal_{\kmax}(X^\lambda). \numberthis\label{realMenergy} \]
Note that $\eta(m_i(s))=m_i(s)(m_i(s)-1)$ is the number of ordered pairs of hops arriving at the relay $X_i=s^i_0$. 
We will explain and motivate these choices in Section~\ref{sec-SIRpenaltydiscussion1}. %and $\eta(m_i(s))=0$ if $m_i(s) \in \{ 0,1 \}$. % i.e., it only penalizes multiple hops arriving at the same relay.

In the downlink scenario, instead of users sending messages to the base station, the base station sends exactly one message to each of the users, using the same relaying rules. One can define a Gibbsian model analogously, now for trajectories from $o$ to $X_i$ instead of the other way around.  For this, the interference term and the congestion term have to be redefined in an obvious way. We are certain that analogues of all our results are true and can be proved in the same way, hence we abstained from spelling them out.

For possible extensions of this model involving time dependence or users transmitting multiple messages, see Section~\ref{sec-modelextensions}.

\subsection{The limiting free energy}\label{sec-freeenergy}

The main goal of this paper is the description of this model in the limit $\lambda\to\infty$ in the quenched setting. Our first result describes the limiting {\em free energy}, i.e., the exponential behaviour of the partition function $Z^{\gamma,\beta}_\lambda(X^\lambda)$. One expects that this is entirely governed by the large deviations behaviour of the empirical measures $((R_{\lambda,k})(S)_{k \in [\kmax]})_{\lambda>0}$. This expectation is supported by the fact that, for $i \in I^\lambda$ and $s \in \Scal_{\kmax}(X^\lambda)$, we can express $m_i(s)$ defined in \eqref{midef} in terms of $(R_{\lambda,k}(s))_{k \in [\kmax]}$ as follows
\[ m_i(s) = \lambda \sum_{k=1}^{\kmax} \sum_{l=1}^{\kmax} \pi_{l} R_{\lambda,k}(s) (\{ s^i_0 \}). \numberthis\label{miR} \]

Surprisingly, it turns out that the limiting free energy cannot be described entirely in terms of these measures. The reason is that the function in \eqref{miR} that maps them onto $m_i(s)$ is highly discontinuous in the limit $\lambda \to \infty$; even a proper formulation of such continuity would be awkward since both $i$ and $s$ depend on $\lambda$.

One therefore needs to substantially extend the probability space and to choose an additional family of empirical measures such that the congestion term $\mathfrak M(s)$ can be written as a (lower semi-)continuous function of these measures in the limit $\lambda \to \infty$. A natural choice of such a family is the one of the measures
\begin{equation}\label{Pdef}
P_{\lambda,m} (s)=\frac{1}{\lambda} \sum_{i \in I^\lambda\colon m_i(s)=m} \delta_{s^i_0}, \quad m \in \N_0.
\end{equation}
Then for $m \in \N_0$, $P_{\lambda,m} (s) \in \Mcal(W)$ is the empirical measure of the users $s^i_0$ whose number of incoming hops equals $m$. For any $s \in \Scal_{\kmax}(X^\lambda)$ the following hold
\begin{equation}\label{discreteconstraints}
(i)\quad \sum_{k=1}^{\kmax} \pi_0 R_{\lambda,k}(s) = L_\lambda,\qquad (ii)\quad \sum_{m=0}^{\infty} P_{\lambda,m}(s) = L_\lambda,\qquad(iii) \quad \sum_{m=0}^{\infty} m P_{\lambda,m}(s) = \sum_{k=1}^{\kmax} \sum_{l=1}^{k-1} \pi_l R_{\lambda,k}(s).
\end{equation}
Condition (i) expresses our assumption that each user transmits precisely one message, (ii) says that each user serves as a relay for precisely $m$ message trajectories for precisely one $m\in\N_0$, and (iii) says that the relays can be calculated in two ways: according to the number of incoming hops and according to the index of the hop of a trajectory that uses it. Moreover, we can write \eqref{Menergy} in terms of $(P_{\lambda,m}(s))_{m\in\N_0}$ as follows
\[ \mathfrak M(s)= \sum_{i \in I^\lambda} \eta(m_i(s)) = \lambda \sum_{m=0}^{\infty} \eta(m) P_{\lambda,m}(s)(W). \] 
We note that the function $\mathcal M(W)^{\N_0} \to \R\cup \{\infty\}$, $(\xi_m)_{m\in\N_0} \mapsto \sum_{m=0}^{\infty} \eta(m) \xi_m(W)$ is lower semicontinuous, and even continuous on $\{ (\xi_m)_{m\in \N_0} \colon \sum_{m=0}^{\infty} \eta(m) \xi_m(W) \leq \alpha \}$ for any $\alpha \in \R$. 

The limiting free energy will be described in terms of the following kind of families of measures, and it will turn out that all subsequential limits of the families $((R_{\lambda,k}(S))_{k=1}^{\kmax},(P_{\lambda,m}(S))_{m=0}^{\infty})$ in the quenched limit $\lambda \to \infty$ are of this kind.

\begin{defn}\label{def-admtrajet}
An \emph{admissible trajectory setting} is a collection of measures $\Psi=((\nu_k)_{k=1}^{\kmax},(\mu_m)_{m=0}^{\infty})$ with $\nu_k \in \mathcal{M}(W^k)$ for all $k$ and $\mu_m \in \mathcal{M}(W)$ for all $m$, satisfying the following properties.
\begin{equation}\label{constraints}
(i)\quad \sum_{k=1}^{\kmax} \pi_0 \nu_k = \mu,\qquad (ii)\quad \sum_{m=0}^{\infty} \mu_m = \mu,\qquad(iii) \quad M:=\sum_{m=0}^{\infty} m \mu_m = \sum_{k=1}^{\kmax} \sum_{l=1}^{k-1} \pi_l \nu_k.
\end{equation}
\end{defn}
The measure $\nu_k$ is the measure of the $k$-hop trajectories and $\mu_m$ the measure of the users that receive precisely $m$ incoming hops; note that there is no reason that they be normalized (like for $\mu$). Observe that in \eqref{discreteconstraints}, $L_\lambda$, $R_{\lambda,k}(s)$ and $P_{\lambda,m}(s)$ play the role of $\mu$, $\nu_k$ and $\mu_m$, respectively. In particular, after replacing $\mu$ by $L_\lambda$, $((R_{\lambda,k}(s))_{k \in [\kmax]},(P_{\lambda,m}(s))_{m\in\N_0})$ satisfies the definition of an admissible trajectory setting. See Section~\ref{sec-Discussion} for more explanations and interpretations, moreover for a modified version of our model where the assumption (i) is relaxed.
By
\[
\mathcal H_V(\nu\mid \widetilde \nu)=\begin{cases}
\int_V \d \nu\,\log\frac{\d \nu}{\d \widetilde \nu}\,-\nu(V)+\widetilde \nu(V) ,&\text{if the density }\frac{\d \nu}{\d \widetilde \nu}\text{ exists,}\\
+\infty&\text{otherwise,}
\end{cases} \numberthis\label{relativeentropy}
\]
we denote the relative entropy \cite[Section 2.3]{GZ93} of a Borel measure $\nu$ with respect to another Borel measure $\widetilde\nu$ on a measurable subset $V$ of $\R^n$, $n \in \N$.

For an admissible trajectory setting $\Psi=((\nu_k)_{k=1}^{\kmax}, (\mu_m)_{m=0}^{\infty})$ we define
\begin{equation} \label{limitSIR}
\mathrm S(\Psi)=\sum_{k=1}^{\kmax} \int_{W^k} \d \nu_k\, \widetilde f_k,\qquad \text{where }\widetilde f_k( x_0,\ldots,x_{k-1})=f_k(\mu,x_0,\ldots,x_{k-1}),
\end{equation}
and
\begin{equation} \label{limitbottleneck}
\mathrm M(\Psi)=\sum_{m=0}^{\infty} \eta(m)\, \mu_m(W)
\end{equation}
and
\begin{equation}\label{quenchedentropy} 
 \mathrm I(\Psi)=\sum_{k=1}^{\kmax} \mathcal H_{W^k}\big(\nu_k \mid \mu \tensor M^{\tensor (k-1)}\big)+ \sum_{m=0}^{\infty} \mathcal H_{W}(\mu_m \mid \mu c_m)+\mu(W) \Big( 1-\sum_{k=1}^{\kmax} M(W)^{k-1}\Big)-\frac1\e ,
 \end{equation}
 where we recall $M=\sum_{m\in\N_0}m \mu_m$ from Definition~\ref{def-admtrajet}(iii), $\eta$ defined before \eqref{Menergy}, and $c_m=\exp(-1/(\e\mu(W))(\e\mu(W))^{-m}/m! $ are the weights of the Poisson distribution with parameter $1/(\e\mu(W))$. In Section~\ref{sec-Discussion-varformula}, we argue that $\mathrm I(\Psi)$ is well-defined as an element of $(-\infty,\infty]$ and $\Psi\mapsto \mathrm I(\Psi)$ is a lower semicontinuous function that is bounded from below, and we provide an interpretation for $\mathrm I(\cdot)$. A tedious but elementary calculation shows that $\mathrm I$ is convex. In Section~\ref{sec-LDP}, $\mathrm I$ will turn out to govern the large deviations of the trajectory configuration. The terms $\mathrm S(\cdot)$ and $\mathrm M(\cdot)$ are analogues of the penalty terms $\mathfrak S(\cdot)$ and $\mathfrak M(\cdot)$ in the high-density setting, respectively.
 
We fix all the parameters $W, \mu, \kmax, f_k, \eta, \gamma$ and $\beta$ of the model. Our first main result is the following. 

\begin{theorem}[Quenched exponential rate of the partition function] \label{theorem-variation1} For $\P$-almost all $\omega \in \Omega$,
\[ \lim_{\lambda \to \infty} \frac{1}{\lambda} \log Z^{\gamma,\beta}_\lambda(X^\lambda(\omega)) = -\inf_{\Psi \text{ admissible trajectory setting}} \big(\mathrm I(\Psi)+\gamma \mathrm S(\Psi)+\beta \mathrm M(\Psi)\big). \numberthis\label{variation} \]
\end{theorem}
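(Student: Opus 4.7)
The strategy is to derive matching upper and lower bounds for $\lambda^{-1}\log Z^{\gamma,\beta}_\lambda(X^\lambda)$ through a spatial discretization followed by Stirling's formula, in the spirit of \cite{KM02,AK08}. Fix a measurable partition $(B_j)_{j=1}^{J_\delta}$ of $W$ with cells of diameter at most $\delta$. Every trajectory configuration $s\in\mathcal{S}_{\kmax}(X^\lambda)$ induces a \emph{discrete type}, namely the integer masses $R_{\lambda,k}(s)(B_{j_0}\times\cdots\times B_{j_{k-1}})$ and $P_{\lambda,m}(s)(B_j)$. For a prescribed discrete type compatible with $L_\lambda$ in the sense of \eqref{discreteconstraints}, elementary combinatorics counts the number of configurations realizing it as a product of multinomial coefficients: one assigns users to starting cells of $k$-hop trajectories through prescribed cell-patterns, and separately distributes the incoming-hop counts among the users. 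Multiplying by the a priori weight $N(\lambda)^{-(k-1)}$ per $k$-hop trajectory and applying Stirling's formula, these combinatorial factors yield, upon sending $\lambda\to\infty$ and then $\delta\to 0$, precisely the entropy $\mathrm I(\Psi)$; the Poisson reference weights $c_m$ arise naturally from the independent uniform choices of relays.

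Both penalties are handled in terms of the discretized empirical measures. The interference $\lambda^{-1}\mathfrak S(s)=\sum_k\langle R_{\lambda,k}(s),f_k(L_\lambda,\cdot)\rangle$ is a continuous functional of the tuple (with the a.s.\ convergence $L_\lambda\to\mu$ controlling the random coupling), so its value on any discrete type is uniformly $o_\delta(1)$-close to $\mathrm S(\Psi)$ by the uniform continuity of $f_k$ on $W^k$. The congestion $\lambda^{-1}\mathfrak M(s)=\sum_m\eta(m)P_{\lambda,m}(s)(W)$ is only lower semicontinuous in the weak topology, but the growth hypothesis $\eta(m)/m\to\infty$ furnishes an exponential tightness estimate on $\sum_{m>\mmax}\eta(m)P_{\lambda,m}(s)(W)$, allowing truncation to $m\leq \mmax$ on which the congestion becomes continuous in the discrete type; one sends $\mmax\to\infty$ at the end.

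The upper bound is obtained by grouping configurations according to their discrete type, bounding the number of types by $e^{o(\lambda)}$, and combining Stirling with the continuity/semicontinuity above to conclude
$$\tfrac{1}{\lambda}\log Z^{\gamma,\beta}_\lambda(X^\lambda)\leq -\inf_{\Psi^\delta}\big(\mathrm I_\delta(\Psi^\delta)+\gamma\mathrm S(\Psi^\delta)+\beta\mathrm M(\Psi^\delta)\big)+o_\delta(1),$$
with the infimum running over discretized admissible settings; letting $\delta\to 0$ and using lower semicontinuity of the functional yields the upper bound. For the lower bound, pick any admissible $\Psi$ with finite cost, discretize it so as to match $L_\lambda$ up to $O(J_\delta)$ corrections (negligible after dividing by $\lambda$), restrict the sum defining $Z^{\gamma,\beta}_\lambda$ to configurations of that single discrete type, and apply Stirling from below. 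The almost-sure convergences $L_\lambda\to\mu$ and $N(\lambda)/\lambda\to\mu(W)$ ensure the bounds are quenched rather than annealed.

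The hardest part will be the congestion term. Since $\mathfrak M$ is discontinuous as a function of $(R_{\lambda,k})_k$ alone, one must work throughout on the enlarged state space indexed by both $(R_{\lambda,k})_k$ and $(P_{\lambda,m})_m$, subject to the coupling constraint \eqref{discreteconstraints}(iii). The combinatorial task then reduces to showing that, once both sets of masses are fixed, the joint count factorizes (up to an $e^{o(\lambda)}$ correction) as the product of two multinomials -- one for cell-patterns of trajectories, one for relay-multiplicity assignments -- which is the origin of the two entropic summands in $\mathrm I(\Psi)$. Establishing this asymptotic decoupling uniformly in $\delta$ and $\mmax$, and identifying the Poisson-type reference measure $c_m$ that emerges from the uniform relay selection, is the main technical crux of the argument.
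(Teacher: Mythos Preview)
Your outline is essentially the paper's proof: spatial discretization, exact combinatorics on discrete types, Stirling, continuity for $\mathfrak S$, a cut-off argument for $\mathfrak M$ based on $\eta(m)/m\to\infty$, and the lower/upper bound structure you describe all match Sections~\ref{sec-proofingredients}--\ref{sec-varproof}.

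One correction to your combinatorial picture: once both families $(R_{\lambda,k}(s))_k$ and $(P_{\lambda,m}(s))_m$ are fixed at the discrete level, the count is an \emph{exact} product of \emph{three} factors, not an asymptotic product of two. In the paper's Lemma~\ref{lem-cardinality}, these are $N^1$ (the multinomial you call ``cell-patterns of trajectories''), $N^2$ (the multinomial you call ``relay-multiplicity assignments''), and a third factor $N^3=\prod_i M_i!/\prod_m (m!)^{\lambda\mu_m^{\delta,\lambda}(W^\delta_i)}$ counting the ways to assign the incoming hops in each cell to the relays once their multiplicities are prescribed. It is this third factor, combined with the normalization $N(\lambda)^{-\sum_i(s^i_{-1}-1)}$, that produces both the $\log(m!)$ contributions inside $c_m$ and the term $-H_W(M\mid\mu)$ in the representation~\eqref{I-proof} of $\mathrm I$; the two multinomials alone would give the wrong rate function. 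So the ``asymptotic decoupling'' you flag as the crux is in fact an exact identity, but with one more piece than you anticipate.
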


See Section~\ref{sec-Discussion} for a discussion and Section~\ref{sec-varproof} for the proof. 

\subsection{Description of the minimizers}\label{sec-minimizerstatement} 

From the variational formula in \eqref{variation}, descriptive information about the typical behaviour of the network can be deduced, especially in the case of the specific choice of $\mathfrak M$ and $\mathfrak S$ from Section~\ref{sec-interference&congestion}, see Sections~\ref{sec-LDP}, \ref{Discussion-minimizers} and \ref{sec-Discussion}. Hence, it is important to derive the Euler-Lagrange equations and to characterize the minimizers in most explicit terms. Our main results in this respect are the following. Note that the case $\kmax=1$ is trivial.

\begin{prop}[Characterization of the minimizer(s)] \label{prop-minimizer}
Let $\kmax>1$. The infimum in the variational formula in \eqref{variation} is attained, and every minimizer $\Psi=((\nu_k)_{k=1}^{\kmax},(\mu_m)_{m=0}^{\infty})$ has the following form.
\begin{eqnarray}
 \nu_k(\d x_0,\ldots, \d x_{k-1} ) &=& \mu(\d x_0)\, A(x_0)\prod_{l=1}^{k-1} \big(C(x_{l})M(\d x_l)\big) \e^{-\gamma \widetilde f_k(x_0,\ldots,x_{k-1})},\qquad k \in [\kmax], \label{forshow}\\
 \mu_m(\d x) &=& \mu(\d x)\, B(x)\frac{(C(x)\mu(W))^{-m} }{m!}\e^{-\beta \eta(m)},\qquad m\in\N_0,\label{forshowmu}
\end{eqnarray}
where $A,B, C \colon W\to [0,\infty)$ are functions such that the conditions in \eqref{constraints} are satisfied.
\end{prop}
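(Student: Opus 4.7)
The approach is to first establish existence of a minimizer via weak compactness and lower semicontinuity, then characterize any such minimizer via Lagrange multipliers for the linear constraints~\eqref{constraints}.

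\textbf{Existence.} For a minimizing sequence $(\Psi^{(n)})_n$, constraints (i) and (ii) of~\eqref{constraints} force $\nu_k^{(n)}(W^k)\le\mu(W)$ and $\mu_m^{(n)}(W)\le\mu(W)$, so each coordinate lies in a weakly compact subset of $\mathcal M(W^k)$ (resp.\ $\mathcal M(W)$) by Prokhorov's theorem and compactness of $W$. A diagonal extraction yields coordinate-wise weak limits $\Psi^\star=((\nu_k^\star)_k,(\mu_m^\star)_m)$. The hypothesis $\eta(m)/m\to\infty$ together with boundedness of $\mathrm M(\Psi^{(n)})$ gives uniform tail control $\sum_{m\ge N}\mu_m^{(n)}(W)\le C/\inf_{m\ge N}\eta(m)\to 0$ as $N\to\infty$, ruling out escape of mass to $m=\infty$ and passing constraints (ii) and (iii) to the limit. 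Lower semicontinuity of $\mathrm I$ and $\mathrm M$ together with continuity of $\mathrm S$ (noted after~\eqref{quenchedentropy}) then show $\Psi^\star$ attains the infimum.

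\textbf{First-order variations.} For any minimizer $\Psi^\star$, introduce Lagrange multipliers $\alpha,b,c\colon W\to\R$ for constraints (i), (ii), (iii). The direct variation of a relative entropy $\mathcal H_V(\nu\mid\widetilde\nu)$ at $\nu$ in direction $\tau$ is $\int\log(\d\nu/\d\widetilde\nu)\,\d\tau$. The essential subtlety is that $M=\sum_m m\mu_m$ enters $\mathrm I$ via the reference measure $\mu\otimes M^{\otimes(k-1)}$, so perturbing $\mu_m$ by $\tau_m$ induces a perturbation $\sigma=m\tau_m$ of $M$, contributing an indirect variation of $\mathcal H_{W^k}(\nu_k\mid\mu\otimes M^{\otimes(k-1)})$. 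Expanding $\log(\d(\mu\otimes M^{\otimes(k-1)})/\d(\mu\otimes(M+t\sigma)^{\otimes(k-1)}))=-t\sum_{j=1}^{k-1}(\d\sigma/\d M)(x_j)+O(t^2)$ yields this indirect variation as $-\sum_{j=1}^{k-1}\int(\d\sigma/\d M)(x_j)\,\d\pi_j\nu_k+(k-1)\mu(W)M(W)^{k-2}\sigma(W)$. Summing over $k$, the second piece cancels \emph{exactly} with the variation of the explicit combinatorial term $-\mu(W)\sum_k M(W)^{k-1}$ in $\mathrm I$, while constraint~\eqref{constraints}(iii) collapses the first to $-\int(\d\sigma/\d M)\,\d M=-m\tau_m(W)$.

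\textbf{Solving the Euler-Lagrange equations.} Setting the total first variations to zero along arbitrary signed $\tau_k$ and $\tau_m$ produces the pointwise equations
\[\log\frac{\d\nu_k}{\d(\mu\otimes M^{\otimes(k-1)})}(x_0,\dots,x_{k-1})+\gamma\widetilde f_k(x_0,\dots,x_{k-1})-\alpha(x_0)+\sum_{l=1}^{k-1}c(x_l)=0,\]
\[\log\frac{\d\mu_m}{c_m\,\d\mu}(x)-m+\beta\eta(m)-b(x)-mc(x)=0.\]
Exponentiating and using the identity $c_m\e^m=\e^{-1/(\e\mu(W))}\mu(W)^{-m}/m!$ (from the formula for $c_m$) yields~\eqref{forshow} with $A=\e^\alpha,\,C=\e^{-c}$ and~\eqref{forshowmu} with $B=\e^{b-1/(\e\mu(W))}$; these functions are nonnegative as exponentials, and the existence of the minimizer guarantees that the constraints in~\eqref{constraints} can be solved for some choice of $A,B,C$. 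The principal obstacle is the implicit $M$-dependence of the reference measures: the problem only decouples into pointwise Euler-Lagrange form because of the fortunate cancellation between the $M$-variation of the reference measures and the explicit combinatorial term $-\mu(W)\sum_k M(W)^{k-1}$ in $\mathrm I$; without it the $\mu_m$ equation would carry a non-local term incompatible with the product form claimed in~\eqref{forshow}-\eqref{forshowmu}.
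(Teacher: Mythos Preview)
Your existence argument and your route to the Euler--Lagrange equations are both essentially sound, but there is a genuine gap: you never establish that a minimizer has $\mu^{\otimes k}\ll\nu_k$ and $\mu\ll\mu_m$ with densities bounded away from zero. Without this, your sentence ``Setting the total first variations to zero along arbitrary signed $\tau_k$ and $\tau_m$'' is unjustified: for a general minimizer one cannot perturb in both directions $\pm\tau$ while staying in the cone of nonnegative measures, so the first-order condition is an inequality (a KKT-type condition) on the boundary, not an equation. Concretely, nothing in your argument excludes that some $\mu_{m_0}$ vanishes on a set $A$ with $\mu(A)>0$, in which case $\log(\d\mu_{m_0}/c_{m_0}\d\mu)=-\infty$ there and your pointwise equation fails. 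The paper spends two lemmas (its Lemmas~\ref{lemma-everywherepositive} and~\ref{lemma-densitybounds}) on exactly this, exploiting the slope $-\infty$ of $x\log x$ at $0$ via explicit competing perturbations; this interior-point property is a prerequisite, not a consequence, of the variational analysis.

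On the computational side, your approach does differ from the paper's. You work directly with the representation~\eqref{quenchedentropy}, where $M$ sits inside the reference measure $\mu\otimes M^{\otimes(k-1)}$, and you correctly identify the cancellation between the $M$-variation of that reference and the explicit term $-\mu(W)\sum_k M(W)^{k-1}$. The paper instead passes to the equivalent representation~\eqref{I-proof}, $\mathrm I(\Psi)=\sum_k H_{W^k}(\nu_k\mid\mu^{\otimes k})-H_W(M\mid\mu)+\sum_m H_W(\mu_m\mid\mu c_m)-1/\e$, in which $M$ appears only through the separate term $-H_W(M\mid\mu)$; the variation is then immediate and no cancellation is needed. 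The paper also uses constrained perturbations $\Phi$ satisfying the linearized constraints~\eqref{perturbator} together with an orthogonality argument in a concrete function space, rather than postulating Lagrange multipliers $\alpha,b,c$ directly; in infinite dimensions the latter requires some justification that you have not supplied, though it can be made rigorous. Both routes lead to the same equations, but the paper's choice of representation makes the calculus of variations considerably cleaner.
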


The proof of Proposition~\ref{prop-minimizer} is in Section~\ref{sec-minimizers}.

While explicit formulas for the functions $A$ and $B$ can, given the function $C$, easily be derived from (i) and (ii) in \eqref{constraints} (see \eqref{ABident}), the condition for $C$ coming from (iii) is deeply involved and cannot be easily solved intrinsically; see \eqref{Ccondition} -- \eqref{Fdefinition}. We have no argument for its existence to offer other than via proving the existence of a minimizer $\Psi$ and deriving the Euler-Lagrange equations. By convexity of $\mathrm I$, $\mathrm S$ and $\mathrm M$, every solution $\Psi$ to these equations is a minimizer. Even the uniqueness of $C$ is unknown to us. We will interpret the equations \eqref{nukminimizer}--\eqref{mumminimizer} in Section~\ref{sec-Discussion-minimizers}. The equations \eqref{forshow}--\eqref{forshowmu} become more explicit in case $\beta=0$, and in this case, uniqueness of the minimizer holds, see Section \ref{sec-beta=0description}.

In case $\kmax=1$, the only admissible trajectory setting is $\Psi=(\nu_1,(\mu_m)_{m\in\N_0})$ with $\mu_0=\nu_1=\mu$ and $\mu_m=0$ otherwise, therefore this $\Psi$ minimizes \eqref{variation}.

\subsection{Large deviations for the empirical trajectory measure}\label{sec-LDP}

Actually, the minimizers of the variational formula in \eqref{variation} receive a rigorous interpretation in terms of important objects that describe the network. As we have already mentioned, the family of empirical measures
\begin{equation}\label{Psilambdadef}
\Psi_\lambda(s)= \big((R_{\lambda,k}(s))_{k \in [\kmax]}, (P_{\lambda,m} (s))_{m\in\N_0}\big)
\end{equation} 
satisfies the definition of an admissible trajectory setting, apart from the fact that in Definition \ref{def-admtrajet}, $\mu$ has to replaced by $L_\lambda$ everywhere, where we recall that $L_\lambda$ converges to $\mu$ almost surely as $\lambda \to \infty$. According to our remarks after Definition~\ref{def-admtrajet}, $R_{\lambda,k}(s)$ and $P_{\lambda,m} (s)$ play the roles of $\nu_k$ and $\mu_m$, respectively, in an admissible trajectory setting, which explains this term. Furthermore, for $s \in \mathcal{S}_{\kmax}(X^\lambda)$, we can express the term $\mathfrak M$ as
$$
\mathfrak M(s)=\lambda \mathrm M(\Psi_\lambda(s)).
$$
Moreover, for the continuous penalization term we have
\begin{equation}\label{SIRapprox}
\mathfrak S(s) \approx \lambda {\rm S}(\Psi_\lambda(s)),
\end{equation}
where we typically do not have an identity, because $\mathfrak S(s) = \lambda \sum_{k=1}^{\kmax} \int_{W^k} \d R_{\lambda,k}(s) f_k(L_\lambda,\cdot)$, which is usually not equal to $\lambda {\rm S}(\Psi_\lambda(s))=\lambda \sum_{k=1}^{\kmax} \int_{W^k} \d R_{\lambda,k}(s) f_k(\mu,\cdot)$. However, since $L_\lambda \Longrightarrow \mu$ almost surely, this difference vanishes in the limit, see Proposition \ref{prop-SIRcomputations}.

We consider now the distribution of $\Psi_\lambda(S)$ with $S$ distributed under the product reference measure introduced in \eqref{referenceprod}, normalized to a probability measure, ${\rm P}_{\lambda,X^\lambda}^{0,0}$; note that the normalization $Z_{\lambda}^{0,0}(X^\lambda)$ is equal to $\kmax^{N(\lambda)}$. Our next main result, Theorem \ref{thm-LDP}, is a large deviation principle (LDP; see \eqref{upper}--\eqref{lower}) and the convergence towards the minimizers of the variational formula.

\begin{theorem}[LDP and convergence for the empirical measures]\label{thm-LDP}
The following statements hold almost surely with respect to $(X^\lambda)_{\lambda>0}$.
\begin{enumerate}
 \item[(i)]
The distribution of $\Psi_\lambda(S)$ under ${\rm P}_{\lambda,X^\lambda}^{0,0}$ satisfies an LDP as $\lambda\to\infty$ with scale $\lambda$ on the set 
\begin{equation}\label{Adef}
 \Acal=\Big( \prod_{k=1}^{\kmax} \mathcal{M}(W^k) \Big) \times \mathcal{M}(W)^{\mathbb N_0}
 %\Big\{\Psi=\big((\nu_k)_{k\in [\kmax]},(\mu_m)_{m\in\N_0}\big)\colon \sum_{k\in[\kmax]}\pi_0\nu_k=\sum_{m\in\N_0}\mu_m, \sum_{k\in[\kmax]}\sum_{l=1}^{k-1}\pi_l\nu_k=\sum_{m\in\N_0}m\mu_m\Big\}
\end{equation}
with rate function given by $\Acal\ni \Psi\mapsto \mathrm I(\Psi)+\mu(W)\log \kmax$, which we define as $\infty$ if $\Psi$ is not an admissible trajectory setting.

\item[(ii)] For any $\gamma,\beta\in(0,\infty)$, the distribution of $\Psi_\lambda(S)$ under ${\rm P}_{\lambda,X^\lambda}^{\gamma,\beta}$ converges towards the set of minimizers of the variational formula in \eqref{variation}.
\end{enumerate}
\end{theorem}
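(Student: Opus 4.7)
The plan is to deduce both parts from a single combinatorial LDP for the uniform reference measure (part (i)) combined with a Varadhan-type tilting argument that uses Theorem~\ref{theorem-variation1} to identify the normalising constant (part (ii)).

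Fix an almost-sure realisation of $(X^\lambda)_{\lambda>0}$ on which $L_\lambda \Longrightarrow \mu$ and $N(\lambda)/\lambda \to \mu(W)$. For (i), I would first establish exponential tightness of $\Psi_\lambda(S)$ under $\mathrm P^{0,0}_{\lambda,X^\lambda}$ on $\Acal$: each $R_{\lambda,k}(S)$ is supported on the compact $W^k$ with total mass bounded by $N(\lambda)/\lambda$, while in the $\Mcal(W)^{\N_0}$-factor the uniform bound $\sum_{m} m P_{\lambda,m}(S)(W) \le \kmax N(\lambda)/\lambda$ together with the super-linear growth $\eta(m)/m \to \infty$ controls the tail in $m$. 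For the local estimate, partition $W$ into finitely many cells of diameter at most $\eps$ and count, for each admissible tuple of integer mass counts on the resulting grids of the $W^k$ and of $W$, the exact number of configurations $s \in \Scal_{\kmax}(X^\lambda)$ that realise it. This count is expressible as a product of two multinomial coefficients: one for choosing, for each user $i$, the value $s^i_{-1}=k$ and the cell-tuple containing $(s^i_1,\ldots,s^i_{k-1})$, and one for the compatible labelling of each user with its value $m_i(s)=m$; the coupling constraint \eqref{discreteconstraints}(iii) enforces consistency between them. Stirling's formula then converts these binomials into the discretised version of $\mathrm I(\Psi)+\mu(W)\log \kmax$, the additive term coming from the normalisation $Z^{0,0}_\lambda(X^\lambda)=\kmax^{N(\lambda)}$. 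Taking $\lambda \to \infty$ and then $\eps \to 0$, and using lower semicontinuity of $\mathrm I$ and weak continuity of the linear constraints in \eqref{constraints}, yields the standard local upper and lower bounds of a full LDP.

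For (ii), the law of $\Psi_\lambda(S)$ under $\mathrm P^{\gamma,\beta}_{\lambda,X^\lambda}$ is the law under $\mathrm P^{0,0}_{\lambda,X^\lambda}$ tilted by $\exp(-\gamma\mathfrak S(S)-\beta\mathfrak M(S))$ and divided by $Z^{\gamma,\beta}_\lambda(X^\lambda)/\kmax^{N(\lambda)}$. By the exact identity $\mathfrak M(s)=\lambda \mathrm M(\Psi_\lambda(s))$ and the uniform approximation $\mathfrak S(s)/\lambda=\mathrm S(\Psi_\lambda(s))+o(1)$ coming from \eqref{SIRapprox} (justified via $L_\lambda \Longrightarrow \mu$ and continuity of $f_k$), the tilting exponent equals $-\lambda(\gamma \mathrm S+\beta \mathrm M)(\Psi_\lambda(S))+o(\lambda)$ uniformly in $S$. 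Combining (i) with Varadhan's lemma then produces an LDP under $\mathrm P^{\gamma,\beta}_{\lambda,X^\lambda}$ with rate function $\mathrm I+\gamma\mathrm S+\beta\mathrm M$ shifted by its infimum, which by Theorem~\ref{theorem-variation1} equals the logarithmic partition-function limit. The zero set of this shifted rate function is the set of minimizers in \eqref{variation}, which is non-empty by Proposition~\ref{prop-minimizer}; hence $\Psi_\lambda(S)$ concentrates on it.

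The principal obstacle is the lack of continuity of $\mathrm M$: it is only lower semicontinuous on $\Acal$ and unbounded above. The upper-bound half of Varadhan's lemma survives because $\eta$ is bounded below and therefore $-\beta\mathrm M$ is bounded above; the lower-bound half uses precisely the stated lower semicontinuity of $\mathrm M$, for which the super-linear growth of $\eta$ is decisive. In the combinatorics of (i), the same growth condition allows truncation of the infinite product over $m\in\N_0$ at some finite $m_0$ with an error vanishing on the exponential scale, so that Stirling's formula applies to a finite product of multinomials in the usual way. Coupling the two grids of $(R_{\lambda,k})$ and $(P_{\lambda,m})$ through \eqref{discreteconstraints}(iii) is the bookkeeping heart of the argument.
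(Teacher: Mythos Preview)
Your overall strategy (spatial discretisation, Stirling, then a Varadhan-type tilting) is the paper's strategy, but your execution of the combinatorics contains a genuine gap. You write that the count of configurations $s$ realising a given discretised $((R_{\lambda,k}),(P_{\lambda,m}))$ is ``a product of two multinomial coefficients'': one for the choice of cell-sequence per user, one for labelling each user with its value of $m_i(s)$. This misses an entire factor. Once the cell-sequences are fixed, each hop arriving in a cell $W_j^\delta$ must be assigned to a \emph{specific} user in that cell, and when the $(P_{\lambda,m})$ are prescribed this assignment is constrained so that the designated $m$-relays receive exactly $m$ hops each. The number of such assignments is $\prod_j (\lambda M_j)!/\prod_m (m!)^{\lambda\mu_m^{\delta,\lambda}(W_j^\delta)}$ with $M_j=\sum_{k,l}\pi_l\nu_k^{\delta,\lambda}(W_j^\delta)$ (cf.\ Lemma~\ref{lem-cardinality}, factor $N^3_{\delta,\lambda}$). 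This factor lives on the $\lambda\log\lambda$ scale: it is exactly what cancels the reference-measure normalisation $\prod_i N(\lambda)^{s^i_{-1}-1}$ in \eqref{referenceprod} and produces the term $-H_W(M\mid\mu)$ in the rate function. Without it your two multinomials give the wrong exponential rate, off by a diverging quantity. The constraint \eqref{discreteconstraints}(iii) does not merely ``enforce consistency''; it governs the \emph{size} of this additional factor. Also, your appeal to the growth of $\eta$ for exponential tightness under $\mathrm P^{0,0}$ is misplaced: no $\eta$-penalty is present there. Tightness is trivial in the product topology because each coordinate has bounded mass.

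For part (ii), your semicontinuity bookkeeping is backwards. Lower semicontinuity of $\mathrm M$ means $-\beta\mathrm M$ is \emph{upper} semicontinuous; this is what one needs for the Varadhan \emph{upper} bound on closed sets, which is indeed all that is required to prove concentration on the minimiser set as stated in (ii). Your claim that lower semicontinuity of $\mathrm M$ handles ``the lower-bound half'' of a tilted LDP is the wrong direction: the Varadhan lower bound would need $-\beta\mathrm M$ lower semicontinuous, i.e.\ $\mathrm M$ upper semicontinuous, which fails. The paper sidesteps this by first restricting to the compact set $B_C=\{\mathrm M\le C\}$, on which $\mathrm M$ is genuinely continuous; since the complement of $B_C$ has exponentially small $\mathrm P^{\gamma,\beta}$-probability for large $C$ (here the $\beta$-penalisation is essential), Varadhan applies on $B_C$ with a continuous tilting functional, and Theorem~\ref{theorem-variation1} identifies the normaliser.
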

For the reader's convenience, we recall that the LDP states that the rate function $\mathrm I+\mu(W)\log \kmax$ is lower semicontinuous and 
\begin{eqnarray} 
\limsup_{\lambda\to\infty}\frac 1\lambda\log {\rm P}_{\lambda,X^\lambda}^{0,0}(\Psi_\lambda(S)\in F)&\leq& -\inf_F \big( {\rm I}+\mu(W)\log \kmax \big),\label{upper}\\
\liminf_{\lambda\to\infty}\frac 1\lambda\log {\rm P}_{\lambda,X^\lambda}^{0,0}(\Psi_\lambda(S)\in G)&\geq& -\inf_G \big( {\rm I}+\mu(W)\log \kmax \big),\label{lower}
\end{eqnarray}
for any closed set $F$ and any open set $G$ in $\Acal$. See \cite{DZ98} for more on large deviations theory.  On $\Acal$, we consider the product topology that is induced by weak convergence in each factor; this is equal to coordinatewise weak convergence, see Section \ref{sec-admtrajetstandardsetting} for more details. Convergence of a distribution towards a set is defined by requiring that for any neighbourhood of the set, the probability of not being in the neighbourhood vanishes. 

The proof of Theorem~\ref{thm-LDP}(i) is carried out in Section \ref{sec-LDPproof}, using Lemma~\ref{lemma-existence}. Assertion (ii) is a simple consequence of (i), since the functionals $\mathrm S$ and $\mathrm M$ are bounded and continuous on the set $B_C=\{\Psi\in\Acal\colon \mathrm M(\Psi)\leq C  \}$ for any $C$, and $B_C$ is compact in $\Acal$ (see Lemma \ref{lemma-existence}). Denoting the level sets of the rate function $\mathrm I+\mu\log\kmax$ by $\Phi_\alpha=\{ \Psi\in\Acal\colon \mathrm I(\Psi)+\mu(W)\log \kmax \leq \alpha \}$ for $\alpha \in \mathbb R$, we see that $\Phi_\alpha \cap B_C$ is compact for any $\alpha$ and $C$. Thus, Varadhan's lemma can be applied to prove Assertion (ii).

\subsection{Dropping the congestion term}\label{sec-beta=0description}
Proposition~\ref{prop-minimizer} yields a rather implicit description of the minimizers of \eqref{variation} in the case $\beta,\gamma>0$. The cardinality of the set of minimizers is also unclear. However, in the special case $\beta=0$, where the congestion functional $\mathfrak M$ \eqref{Menergy} is absent, the situation is much better. Indeed, it turns out that the minimizer is unique and is explicitly given in terms of the parameters of the model. Especially for the specific choice of Section \ref{sec-interference&congestion} where $\mathfrak S$ penalizes interference \eqref{realSIRenergy}, on base of this knowledge, we are able in \cite{KT17b} to derive a number of relevant qualitative properties of the trajectories, see Section~\ref{sec-furtherresults} for a summary.

In what follows, we call $\Sigma=(\nu_k)_{k \in [\kmax]}$ with $\nu_k \in \Mcal(W^k)$ for all $k\in[\kmax]$ an \emph{asymptotic routeing strategy} if we have $\sum_{k=1}^{\kmax} \pi_0 \nu_k=\mu$. In \eqref{constraints} we see that the first coordinate, $\Sigma$, of an admissible trajectory setting $\Psi$ is an asymptotic routeing strategy, and in \eqref{limitSIR} we see that $\mathrm S(\Psi)$ depends only on $\Sigma$. We will therefore write $\mathrm S(\Sigma)$ for $\mathrm S(\Psi)$. Further, we write $M=\sum_{k=1}^{\kmax} \sum_{l=1}^{k-1} \pi_l \nu_k$, in accordance with \eqref{constraints} but with no regard to the measures $(\mu_m)_{m\in\N_0}$. We define an entropic term $\mathrm J$ for asymptotic routeing strategies as follows.
\[ \mathrm J(\Sigma) = \sum_{k=1}^{\kmax} \Hcal_{W^k} (\nu_k \mid \mu^{\tensor k}) - \sum_{k=2}^{\kmax} \mu(W)^k + M(W) \log \mu(W). \numberthis\label{Jentropy} 
\]
Similarly to $\mathrm I$ in \eqref{quenchedentropy}, $\mathrm J$ describes counting complexity in the high-density limit, but without reference to the measures $(\mu_m)_m$. 

The following proposition summarizes the analogues of Theorem~\ref{theorem-variation1}, Proposition~\ref{prop-minimizer} and Theorem~\ref{thm-LDP} in case $\beta=0$, after dropping all the measures $\mu_m$.

\begin{prop} \label{prop-variationbeta=0}The following statements hold almost surely with respect to $(X^\lambda)_{\lambda>0}$.
\begin{enumerate}
 \item\label{LDP-variationbeta=0}
The distribution of \[ \Sigma_\lambda(S)=(R_{\lambda,k}(S))_{k\in[\kmax]} \numberthis\label{Sigmalambdadef} \] under ${\rm P}_{\lambda,X^\lambda}^{0,0}$ satisfies an LDP as $\lambda\to\infty$ with scale $\lambda$ on the set 
$
 \Acal_0=\prod_{k=1}^{\kmax} \mathcal{M}(W^k) 
$
with rate function given by $\Acal_0\ni \Psi\mapsto \mathrm J(\Sigma)+\mu(W)\log \kmax$, which we define as $\infty$ if $\Sigma$ is not an asymptotic routeing strategy. Further, the rate function has compact level sets.

 \item\label{freeenergy-variationbeta=0} For any $\gamma\in(0,\infty)$,
\begin{align}
 \lim_{\lambda\to\infty}\frac{1}{\lambda}\log Z^{\gamma,0}_\lambda(X^\lambda)
&=-\inf_{\Sigma\text{ asymptotic routeing strategy}} \big(\mathrm J(\Sigma)+\gamma \mathrm S(\Sigma)\big) \label{beta0variation} 
\end{align}
\item\label{minimizer-variationbeta=0} Let $\gamma>0$ and $\kmax>1$. The variational formula on the r.h.s. of \eqref{beta0variation} exhibits a unique minimizer $\Sigma=(\nu_k)_{k \in [\kmax]}$ given as 
\[ \nu_k(\d x_0,\ldots,\d x_{k-1} ) = \mu(\d x_0) A(x_0) \prod_{l=1}^{k-1} \frac{\mu(\d x_l)}{\mu(W)} \e^{-\gamma \widetilde f_k(x_0,\ldots,x_{k-1})}, \quad k \in [\kmax], \numberthis\label{nukminimizerbeta=0} \]
where
\[ \frac1{A(x_0)}=\sum_{k=1}^{\kmax} \int_{W^{k-1}} \prod_{l=1}^{k-1} \frac{\mu(\d x_l)}{\mu(W)} \e^{-\gamma \widetilde f_k(x_0,\ldots,x_{k-1})}, \qquad x_0 \in W. \numberthis\label{Adefnew} \]

\item\label{convergence-variationbeta=0} For any $\gamma\in(0,\infty)$, the distribution of $\Sigma_\lambda(S)$ under ${\rm P}_{\lambda,X^\lambda}^{\gamma,0}$ converges to the minimizer of the variational formula in \eqref{beta0variation}.
\end{enumerate}
\end{prop}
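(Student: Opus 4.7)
My plan is to prove the four parts in order, with the LDP (i) as the deepest step and (ii), (iii), (iv) following as more-or-less standard consequences via Varadhan's lemma and a convex/Lagrangian analysis.

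For (i), the projection $\pi\colon\Acal\to\Acal_0$, $((\nu_k)_k,(\mu_m)_m)\mapsto(\nu_k)_k$, is continuous in the product weak topology, so the contraction principle applied to Theorem~\ref{thm-LDP}(i) immediately yields an LDP for $\Sigma_\lambda(S)$ on $\Acal_0$ with rate function
\[
\mathrm J'(\Sigma)+\mu(W)\log\kmax:=\inf\bigl\{\mathrm I(\Psi)+\mu(W)\log\kmax:\pi(\Psi)=\Sigma,\ \Psi\text{ admissible}\bigr\}.
\]
Compactness of the resulting level sets is inherited: once $\sum_{k}\pi_0\nu_k=\mu$ is imposed the masses $\nu_k(W^k)$ are bounded by $\mu(W)$, so the first coordinate of $\Acal$ already lies in a weakly compact set. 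The nontrivial step is identifying $\mathrm J'=\mathrm J$. Given an admissible $\Sigma$, the measure $M=\sum_{k=1}^{\kmax}\sum_{l=1}^{k-1}\pi_l\nu_k$ is fixed, and a pointwise Lagrange-multiplier computation (with multipliers attached to $\sum_m\mu_m=\mu$ and $\sum_m m\mu_m=M$) shows that the unique minimizer of $(\mu_m)_m\mapsto\sum_m\Hcal_W(\mu_m\mid\mu c_m)$ under these constraints is the Poissonization $\mu_m^\star(\d x)=\e^{-\rho(x)}\rho(x)^m/m!\,\mu(\d x)$ with $\rho=\d M/\d\mu$ (the infimum is $+\infty$ unless $M\ll\mu$). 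Substituting $\mu_m^\star$ into \eqref{quenchedentropy} and combining the change-of-reference identity
\[
\sum_{k=1}^{\kmax}\!\bigl[\Hcal_{W^k}(\nu_k\mid\mu\otimes M^{\otimes(k-1)})-\Hcal_{W^k}(\nu_k\mid\mu^{\otimes k})\bigr]=-\!\int_W\!\log\rho\,\d M+\mu(W)\!\sum_{k=1}^{\kmax}\!M(W)^{k-1}-\!\sum_{k=1}^{\kmax}\!\mu(W)^k
\]
with $\int_W\log\rho\,\d M=\Hcal_W(M\mid\mu)+M(W)-\mu(W)$ and the Poisson identities $\sum_m c_m=1$, $\sum_m m c_m=1/(\e\mu(W))$ produces a telescoping that leaves exactly $\mathrm J(\Sigma)+\mu(W)\log\kmax$.

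For (ii) and (iv), I use the tilting $Z^{\gamma,0}_\lambda(X^\lambda)=\kmax^{N(\lambda)}\cdot\mathrm E^{0,0}_{\lambda,X^\lambda}[\e^{-\gamma\mathfrak S(S)}]$ together with Proposition~\ref{prop-SIRcomputations}, which, thanks to $L_\lambda\Rightarrow\mu$ almost surely and the continuity of the $f_k$, allows replacing $\mathfrak S(s)$ by $\lambda\mathrm S(\Sigma_\lambda(s))$ up to $o(\lambda)$ uniformly in $s$. Varadhan's lemma applied to the LDP from (i)---with a routine truncation to handle $f_k$ bounded only from below---gives (ii), and the same tilting argument yields an LDP for $\Sigma_\lambda(S)$ under $\mathrm P^{\gamma,0}_{\lambda,X^\lambda}$ with rate function $\mathrm J+\gamma\mathrm S+\mu(W)\log\kmax$ shifted by its infimum, from which (iv) follows once (iii) supplies a unique minimizer.

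For (iii), existence follows from compact level sets and lower semicontinuity, and uniqueness from strict convexity of each $\Hcal_{W^k}(\cdot\mid\mu^{\otimes k})$ combined with linearity of $\mathrm S$. To obtain the explicit form, write $\nu_k=g_k\mu^{\otimes k}$, use $\sum_k\nu_k(W^k)=\mu(W)$ to absorb the affine terms in \eqref{Jentropy}, and reduce the objective to
\[
\mathrm J(\Sigma)+\gamma\mathrm S(\Sigma)=\sum_{k=1}^{\kmax}\int_{W^k}g_k\bigl[\log(g_k\mu(W)^{k-1})+\gamma\widetilde f_k\bigr]\,\d\mu^{\otimes k}.
\]
A single Lagrange multiplier $\alpha(x_0)$ for the constraint $\sum_k\int_{W^{k-1}}g_k\,\d\mu^{\otimes(k-1)}=1$, with $A(x_0)=\e^{\alpha(x_0)-1}$, then produces \eqref{nukminimizerbeta=0} and \eqref{Adefnew}. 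The main obstacle is the contraction identification $\mathrm J'=\mathrm J$ in (i): the Poissonization argument must be justified carefully (including the implicit reduction to $M\ll\mu$), and the ensuing cascade of cancellations between the mass-correction terms of the relative entropy \eqref{relativeentropy} and the Poisson weights $(c_m)_m$ is not obvious from the definitions. An alternative direct route---adapting the combinatorial Stirling argument of Theorem~\ref{thm-LDP}(i) without tracking $(P_{\lambda,m})_m$---would sidestep the contraction algebra but essentially duplicates the earlier combinatorial work.
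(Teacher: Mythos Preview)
Your proposal is correct, and the paper explicitly acknowledges your route for part~(i) as valid: right after \eqref{contractionprinciple} the authors write that the LDP in Assertion~(1) ``directly follows from the LDP in Theorem~\ref{thm-LDP}(i) via the contraction principle'' and that identifying the contracted rate function with $\mathrm J$ via the Poisson minimizer \eqref{mumPoisson} is ``elementary but tedious''. The paper, however, deliberately chooses a different path: it reruns the combinatorial discretization argument of Sections~\ref{sec-proofingredients}--\ref{sec-prooflargelambda} from scratch, now using \emph{transmission settings} (Definition~\ref{defn-transmissionsetting}), which drop all reference to the $(\mu_m)_m$ measures. The key new counting identity is $\#K^{\delta,\lambda}(\underline\Sigma)=N^1_{\delta,\lambda}(\underline\Sigma)\times N^4_{\delta,\lambda}(\underline\Sigma)$ with $N^4_{\delta,\lambda}(\underline\Sigma)=\prod_j(\lambda\mu^{\delta,\lambda}(W^\delta_j))^{\lambda\sum_{k,l}\pi_l\nu_k^{\delta,\lambda}(W^\delta_j)}$, whose Stirling asymptotics produce $\mathrm J$ directly without any Poissonization algebra. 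This direct combinatorics then yields part~(ii) first, and parts~(i) and~(iv) are deduced from it.

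The trade-off is clear. Your top-down approach (contraction for (i), then Varadhan for (ii) and (iv)) is more economical given that Theorem~\ref{thm-LDP} is already in hand; its cost is the entropy bookkeeping you flag as the main obstacle --- the cascade of cancellations between the mass terms in \eqref{relativeentropy}, the Poisson weights $c_m$, and the change-of-reference identity for $M$. The paper's bottom-up approach duplicates some combinatorial work but sidesteps that algebra entirely, and it avoids relying on compactness of the level sets of $\mathrm I$ on the full space $\Acal$ (the authors note that the compactness used in Theorem~\ref{theorem-variation1} came from the congestion term, now absent). For part~(iii) the two approaches coincide: existence via compact level sets, then Euler--Lagrange; your appeal to strict convexity of $\Hcal_{W^k}(\cdot\mid\mu^{\otimes k})$ for uniqueness is slightly cleaner than the paper's route through the explicit form of the critical point. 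One small correction: the uniform replacement $\mathfrak S(s)=\lambda\mathrm S(\Sigma_\lambda(s))+o(\lambda)$ does not follow from Proposition~\ref{prop-SIRcomputations} as stated (which concerns fixed controlled standard settings), but rather from the direct argument you sketch --- uniform continuity of $f_k$ on $\Mcal_{\leq 2\mu(W)}(W)\times W^k$ together with $L_\lambda\Rightarrow\mu$ and the mass bound $R_{\lambda,k}(s)(W^k)\leq L_\lambda(W)$.
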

Proposition \ref{prop-variationbeta=0} is proved in Section~\ref{sec-beta=0proof}. An interpretation of the equations \eqref{nukminimizerbeta=0}--\eqref{Adefnew} can be found in Section~\ref{sec-Discussion-minimizers}.

Let us explain in what way Proposition \ref{prop-variationbeta=0} is the special case of the aforementioned results for $\beta=0$ and in what way it differs. It is true that the LDP in Assertion (1) directly follows from the LDP in Theorem~\ref{thm-LDP}(i) via the contraction principle \cite[Theorem 4.2.1]{DZ98} for the projection map $(\Sigma,(\mu_m)_m) \mapsto \Sigma$, however, with rate function given by
\[ \mathrm J(\Sigma)=\inf_{(\mu_m)_{m\in\N_0} \colon \Psi=(\Sigma,(\mu_m)_{m\in\N_0}) \text{ admissible trajectory setting}} \mathrm I(\Psi). \numberthis\label{contractionprinciple} \]
It is an elementary but tedious task to identify this as in \eqref{Jentropy} by identifying
\[ \mu_m (\d x) = \mu(\d x) \frac{\Big(\frac{M(\d x)}{\mu(\d x)}\Big)^{m}}{m!} \e^{-M(\d x)/\mu(\d x)}, \qquad m\in\N_0,\numberthis\label{mumPoisson} \] 
as the unique minimizer on the right-hand side of \eqref{contractionprinciple}, given $M=\sum_{k=1}^{\kmax} \sum_{l=1}^{k-1} \pi_l \nu_k$. However, we chose an alternative route for proving the LDP with explicit identification of $\mathrm J$, which is a variant of the proof of Theorem~\ref{thm-LDP}(i). From \eqref{contractionprinciple} it is clear that the variational formula in \eqref{beta0variation} is indeed the special case of \eqref{variation} for $\beta=0$, i.e.,
\begin{align}\label{beta0variation2}
\inf_{\Sigma\text{ asymptotic routeing strategy}} \big(\mathrm J(\Sigma)+\gamma \mathrm S(\Sigma)\big)
=\inf_{\Psi\text{ admissible trajectory setting}} \big(\mathrm I(\Psi)+\gamma \mathrm S(\Psi)\big).
\end{align}
Note also that the minimizer $\Psi$ is unique. This raises the additional question whether or not the measures $(P_{\lambda,m}(S))_{m\in\N_0}$ converge to the minimizer $(\mu_m)_{m\in\N_0}$ in \eqref{mumPoisson} under ${\rm P}_{\lambda,X^\lambda}^{\gamma,0}$ for $M$ corresponding to the minimal $\nu_k$'s of \eqref{nukminimizerbeta=0}. Since the congestion term, which gave rise to a strong compactness argument, is now absent, this question cannot immediately be decided using large deviations arguments, but we nevertheless believe it is true. Moreover, this compactness property was also used in the proof of Theorem~\ref{theorem-variation1}, which is another reason that we had to redo the proofs of Proposition \ref{prop-variationbeta=0}(2) and (3), given our proof of Assertion (1).

\subsection{Interpretation and qualitative properties of the minimizer(s)}\label{Discussion-minimizers}

\subsubsection{Interpretation of the minimizer(s)} \label{sec-Discussion-minimizers}

In case $\beta,\gamma>0$, Proposition~\ref{prop-minimizer} tells us quite some information about the limiting trajectory distribution and the limiting spatial distribution of users with a given number of incoming hops under the measure ${\rm P}^{\gamma,\beta}_{\lambda,X^\lambda}$. Indeed, both have densities that are $\mu^{\otimes k}$-almost everywhere positive. It is remarkable that the $k$-hop trajectories follow a distribution that comes from choosing independently all the $k$ sites with measures that do not depend on $k$ (the starting point according to $A(x)\,\mu(\d x)$ and all the other $k-1$ sites each according to $C(x)\, M(\d x)$), exponentially weighted with the term $\gamma \widetilde f_k$. Furthermore, all the measures of the users receiving $m$ incoming hops superpose each other on the full set $\supp(\mu)$, and at each space point $x$, this number $m$ is distributed according to some Poisson distribution, exponentially weighted with the term $\beta \eta(m)$.

As for the case $\beta=0,\gamma>0$, we have a unique minimizer, which exhibits all the properties enumerated for $\beta,\gamma>0$. In the $k$-hop trajectories, the starting point is chosen according to $A(x)\,\mu(\d x)$ and all the other $k-1$ sites according to the measure $\mu(\d x)/\mu(W)$, weighted with $\gamma \widetilde f_k$. Moreover, the number of incoming hops at a given relay at the site $x\in W$ is Poisson distributed with parameter equal to $M(\d x)/\mu(\d x)$.

\subsubsection{Qualitative properties of the minimizer}\label{sec-furtherresults}

In our accompanying paper \cite{KT17b}, we analyse the joint routeing behaviour of our Gibbsian system in the high-density limit. We investigate qualitative properties of the minimizer of the variational formula in \eqref{variation}, such as the typical number of hops, the typical length of a hop and the typical shape of a trajectory, in case $\gamma>\beta=0$ and the interference term is chosen as in Section~\ref{sec-interference&congestion}. In this case, the minimizer is unique and has the form (cf.~\eqref{nukminimizerbeta=0})
\[ \nu_k(\d x_0,\ldots,\d x_{k-1} ) = \mu(\d x_0) A(x_0) \prod_{l=1}^{k-1} \frac{\mu(\d x_l)}{\mu(W)} \e^{-\gamma \sum_{l=1}^{k} \frac{\int_W \ell(|y-x_l|) \mu(\d y)}{\ell(|x_{l-1}-x_l|)}}, \quad k \in [\kmax], \numberthis\label{nukminimizerbeta=0special} \]
where
\[ \frac1{A(x_0)}=\sum_{k=1}^{\kmax} \int_{W^{k-1}} \prod_{l=1}^{k-1} \frac{\mu(\d x_l)}{\mu(W)} \e^{-\gamma \sum_{l=1}^{k} \frac{\int_W \ell(|y-x_l|) \mu(\d y)}{\ell(|x_{l-1}-x_l|)}}, \qquad x_0 \in W. \numberthis\label{Adefnewspecial} \]
Further, the empirical measures of trajectories $\Sigma_\lambda(S)=(R_{\lambda,k}(S))_{k \in [\kmax]}$
converge to this minimizer under the Gibbs distribution $\mathrm P^{\gamma,0}_{\lambda,X^\lambda}$, almost surely as $\lambda \to \infty$. Thus, qualitative information about $\Sigma=(\nu_k)_{k \in[\kmax]}$ yields information about these empirical measures for large $\lambda$.

In order to obtain clear pictures and neat results, one needs to analyse the minimizer in extreme regimes. That is, one has to carry out another limit after the high-density limit has been taken. We consider the following regimes: (1) large communication areas, coupled with large transmitter--receiver distances and large numbers of hops, (2) strong penalization of interference, (3) high local density of the intensity measure on a subset of $W$. In these regimes, the probability of deviating from the typical behaviour decays exponentially fast. This indicates that the behaviour of the minimizers is close to the limiting one already for moderate values of the diverging parameter(s). In regime (2), this indication is supported by numerical examples \cite[Section 8]{KT17b}. We now survey the main results of \cite{KT17b} about regimes (1), (2), (3) in words; for further details, see \cite[Sections 3, 4, 5]{KT17b}, respectively. 

In regimes (1) and (2), the typical trajectory quickly approaches the straight line between transmitter and receiver, and the probability of macroscopic deviations from this straight line decays exponentially fast. Interestingly, in regime (1), the typical number of hops tends to infinity in a sublinear way, thus the typical length of a hop tends to infinity. In regime (3), we analyse the global and local repulsive effect of a particularly highly populated subset of $W$. Here, we replace the intensity measure $\mu$ by $\mu^a = \mu + a \Leb|_\Delta$ for some $\Delta \subseteq W$ with $\Leb(\Delta)>0$, and we let $a \to \infty$. The global effect is that if $\ell$ is close to constant on $W$, then $M^a(W)$ tends to zero exponentially fast as $a \to \infty$. Here, $M^a$ is the measure $M$ of all relaying hops (cf.~\eqref{constraints}) corresponding to the minimizer \eqref{nukminimizerbeta=0special}, in case $\mu$ is replaced by $\mu^a$. As for local effects, we discuss under what conditions it becomes unlikely for small $\Delta$ for a user to choose a relay inside a small neighbourhood of $\Delta$ than one outside a larger neighbourhood of $\Delta$.

\subsection{Discussion}\label{sec-Discussion}

\subsubsection{The interference term and the congestion term}\label{sec-SIRpenaltydiscussion1}
The interference term $\mathfrak S(s)$ in \eqref{realSIRenergy} quantifies the joint quality of the transmissions of the messages in terms of a sum of an individual interference term over all the $N(\lambda)$ trajectories and over all of their hops. 
The reason why we choose the {\em reciprocals} of the SIRs is that the \emph{bandwidth} used for a transmission is defined \cite{SPW07} as 
\[ \frac{\varrho}{\log_2 (1+ \mathrm{SIR}(\cdot))}, 
 \numberthis\label{bandwidth}\]
where $\varrho$ is the data transmission rate, and $\mathrm{SIR}$ is defined as in \eqref{SIR} but without the factor of $1/\lambda$ in the denominator. This quantity is of order $1/\lambda$ for $\lambda$ large, under the assumption that $L_\lambda \Longrightarrow \mu$. Thus, in the high-density setting $\lambda\to\infty$, \eqref{bandwidth} can be approached well by (a constant times) the reciprocals of the SIR, since $\log(1+x) \sim x$ as $x \to 0$.

The reason that we took the sum over all the hops of a trajectory is that \cite[Section 3]{SPW07} suggests that in case of multihop communication, the used bandwidth equals the sum of the used bandwidth values corresponding to the individual hops. See \cite{BC12} for another (single hop) setting where the sum of inverse values of SIRs appears as a cost function to be minimized. 

The congestion term $\mathfrak M(s)$ in \eqref{realMenergy} counts the number of ordered pairs of incoming hops arriving at the relays in the system. This is certainly an important characteristics of the quality of service, as too high an accumulation of many messages at relays results in a delay. Hence, it is natural to suppress the occurrence of such events, in order to increase the value of the model for realistic modeling. 

An important property of this term is that it introduces dependence between the trajectories of different messages, unlike the interference term. Indeed, while $\mathfrak S(s)$ can be decomposed into a sum of terms depending on the respective trajectories, each summand in $\mathfrak M(s)$ involves many different trajectories. This is not only true in the special case of penalizing interference and congestion, but in general in our setting introduced in Section~\ref{sec-Gibbsdistribution}.

\subsubsection{The entropy term} \label{sec-Discussion-varformula}

Let us now explain some important properties of the entropy term
\[ \mathrm I(\Psi)=\sum_{k=1}^{\kmax} \mathcal H_{W^k}\big(\nu_k \mid \mu \tensor M^{\tensor (k-1)}\big)+ \sum_{m=0}^{\infty} \mathcal H_{W}(\mu_m \mid \mu c_m)+\mu(W) \Big( 1-\sum_{k=1}^{\kmax} M(W)^{k-1}\Big)-\frac1\e \numberthis\label{quenchedentropyoncemore} \]
defined in \eqref{quenchedentropy}. 

According to (i) and (iii) in \eqref{constraints}, we have that $M(W) \leq (\kmax-1)\mu(W)$, further, the first term on the right-hand side of \eqref{quenchedentropy} is bounded from below. Moreover, since by (ii) in \eqref{constraints}, we have
 \[ \sum_{m \in \N_0} c_m = \sum_{m \in \N_0} \frac{\mu_m(W)}{\mu(W)} = 1, \numberthis\label{mumentropymassidentity} \] it follows that $\sum_{m=0}^{\infty} \mathcal H_{W}(\mu_m \mid \mu c_m)$ is nonnegative. These together with elementary properties of the relative entropy \cite[Section 6.2]{DZ98} imply that $\mathrm I(\Psi)$ is well-defined as an element of $(-\infty,\infty]$ and $\Psi\mapsto \mathrm I(\Psi)$ is a lower semicontinuous function that is bounded from below. More precisely, the LDP in Theorem~\ref{thm-LDP} implies that the infimum of $\mathrm I(\Psi)$ over admissible trajectory settings equals $-\mu(W) \log \kmax$, which equals the almost sure limit of $1/\lambda$ times the logarithm of the total mass $\kmax^{N(\lambda)}$ of the joint \emph{a priori} measure \eqref{referenceprod}.
 
Let us now provide an interpretation of $\mathrm I(\cdot)$. Let $\lambda>0$ and $s \in \Scal_{\kmax}(X^\lambda)$. Recall the empirical measure family $\Psi_\lambda(s)= \big((R_{\lambda,k}(s))_{k \in [\kmax]}, (P_{\lambda,m} (s))_{m\in\N_0}\big)$ from \eqref{Psilambdadef} and the constraints \eqref{discreteconstraints}, which are similar to the ones \eqref{constraints} but with $\mu$ replaced by the rescaled empirical measure $L_\lambda$ everywhere.

Informally speaking, for $\lambda>0$ large, $\mathrm I(\Psi)$ asymptotically describes the following crucial counting term:
\[ \mathrm I(\Psi) \approx -\frac{1}{\lambda} \log \frac{ \# \Big\{ s \in \Scal_{\kmax}(X^\lambda) \colon R_{\lambda,k}(s) \approx  \nu_k,~\forall k \in [\kmax] \text{ and } P_{\lambda,m}(s) \approx \mu_m,~\forall m \in \N_0\Big\}}{N(\lambda)^{\sum_{i \in I^\lambda}  (\widetilde s^i_{-1}-1)}}, \numberthis\label{Iinformal} \]
where $\widetilde s$ in the denominator is an arbitrarily chosen element of the set in the numerator. In this way, it fully describes the distribution of $\Psi_\lambda(s)$ on an exponential level.

A major part of the proof of Theorem~\ref{theorem-variation1} consists in making \eqref{Iinformal} rigorous and verifying the corresponding formal statement. In the beginning of Section~\ref{sec-proofingredients}, we will argue why taking ``$=$'' signs instead of ``$\approx$'' in the numerator of the right-hand side of \eqref{Iinformal} is not applicable. Instead, first, in Section~\ref{sec-discretization}, we will introduce a spatial discretization procedure and formulate a rigorous discrete version of these ``$\approx$'' relations for fixed $\lambda$ and fixed fineness parameter of the discretization. %Second, our next goal will be to formulate properly the ``$\approx$'' relation between the two sides of a discretized version of \eqref{Iinformal}, also as $\lambda \to \infty$ and then the fineness tends to zero. 
In Section~\ref{sec-combinatorics}, we will derive explicit combinatorial formulas for the cardinality of the trajectories in this setting. Next, in Section~\ref{sec-asymptotics}, we will conclude that $1/\lambda$ times the logarithm of the quotient of the obtained counting complexity and the term $N(\lambda)^{\sum_{i \in I^\lambda}  (\widetilde s^i_{-1}-1)}$ tends to $\mathrm I(\Psi)$ in the limit $\lambda \to \infty$ followed by the fineness parameter of the discretization tending to zero.

The characterization of $\mathrm I(\Psi)$ that arises directly from this argument is not exactly \eqref{quenchedentropyoncemore} but another, however identical expression \eqref{I-proof}. The reason why we chose \eqref{quenchedentropyoncemore} as the definition of $\mathrm I(\Psi)$ is that it is given in terms of objects that are commonly used in large deviations theory: relative entropies, multiples of total masses of the corresponding measures plus an additive constant. In particular, we find it natural in \eqref{quenchedentropyoncemore} that each $\mu_m$ is compared to the intensity measure $\mu$ multiplied by the weight of a Poisson distribution at $m$. Indeed, in case $\beta=0$, for the minimizer $\Psi$ of the variational formula \eqref{nukminimizerbeta=0}, $(\smfrac{\d \mu_m}{\d \mu}(x))_{m\in\N_0}$ is the Poisson distribution with parameter $\smfrac{\d M}{\d \mu}(x)$ for each $x \in W$.  Roughly speaking, the relative entropies plus the linear 
term $\mu(W)( 1-\sum_{k=1}^{\kmax} M(W)^{k-1})$ arise from $1/\lambda$ times the logarithmic rates of certain multinomial expressions in the above mentioned discretized counting procedure, after carrying out the limit $\lambda \to \infty$ followed by the fineness parameter tending to zero.

\subsubsection{Rotation symmetry} Let us assume that $\beta=0$ (cf. Section~\ref{sec-beta=0description}), and let us consider the special setting of Section~\ref{sec-interference&congestion} where $\mathfrak S$ penalizes interference. If $W=\overline{B_r(o)} \subset \R^d$ is a closed ball and $\mu$ is invariant under rotations, then the measures $(\nu_k)_k$ in \eqref{nukminimizerbeta=0} are also invariant under rotations of the entire trajectory, i.e., for any orthogonal $d\times d$-matrix $O$, we have that $\nu_k(\d x_0,\ldots,\d x_{k-1})=\nu_k^O(\d x_0,\ldots,\d x_{k-1}) \equiv \nu_k(\d (O x_0), \ldots, \d (O{x_{k-1}}))$ for any $k \in [\kmax]$. This is easily seen by an inspection of the formulas for the entropy term $\mathrm J$ in \eqref{Jentropy} and for the interference term $\mathrm S$ in \eqref{limitSIR}, as the function $(x,y)\mapsto \int_{W} \ell(|z-y|)\,\mu(\d z)/\ell(|x-y|)$ is invariant under multiplication of both arguments with the same orthogonal matrix.

\subsubsection{Non-Poissonian users} \label{sec-poisson}

In fact, the main results of this paper hold for any collection of (random or non-random) point processes $((X_i)_{i=1,\ldots,N(\lambda)})_{\lambda>0}$ on $W$ for which $L_\lambda=\frac{1}{\lambda} \sum_{i=1}^{N(\lambda)} \delta_{X_i}$ converges weakly (almost surely, if random) to $\mu$ as $\lambda \to \infty$. Neither the independence or monotonicity in $\lambda$, nor the Poissonity of $(N(\lambda))_{\lambda>0}$ is used for the proofs. For example, our results remain also true for the deterministic set $X^\lambda=W\cap(\frac 1\lambda \Z^d)$ and $\mu$ the Lebesgue measure on $W$.

\subsubsection{The annealed setting}\label{sec-annealed}

Of mathematical interest might also be the annealed setting, where we average also over the locations of the users. In order to get an interesting result, we have to assume that $L_\lambda$ satisfies a large deviation principle on the set $\Mcal(W)$ with some good rate function $\mathrm H$. (In the case of a Poisson point process with intensity measure $\lambda \mu$, $\mathrm H$ would be \cite[Proposition 3.6]{HJP16} the relative entropy with respect to $\mu$, see \eqref{relativeentropy}.) Then the large-$\lambda$ exponential rate of the annealed free energy should be equal to the negative infimum over $\mu_0\in\Mcal(W)$ of $\mathrm H(\mu_0)$ plus the quenched rate function terms from the right-hand side of \eqref{variation} with $\mu$ replaced by $\mu_0$ everywhere. Also our other results on the LDP and the form of the minimizer(s) should have some analogue, which we do not spell out.

\subsubsection{Extensions of the model of Section~\ref{sec-interference&congestion}}\label{sec-modelextensions}
The main goal of this paper is to set up a model where message routeing happens probabilistically, with a uniform \emph{a priori} distribution weighted by two penalization terms of different nature. Our results demonstrate how the interplay between entropy and energy leads to an orderly behaviour of the system in the high-density limit on its own.

Section~\ref{sec-interference&congestion} provides an example of the two penalization terms that can be interpreted in terms of well-known objects in telecommunication. In Section~\ref{sec-furtherresults}, we summarized qualitative properties of the minimizer of the variational formula in this special setting for $\beta=0$. This special case can be viewed as a snap-shot type model, where there is no time-dependence but all hops of all transmissions happen simultaneously at the same time. Further, we assume that each user submits exactly one message. However, one can relax these assumptions and make the model more realistic in various ways.

First, one easily sees from the proofs in Sections~\ref{sec-proofingredients}--\eqref{sec-beta=0proof} that Theorems~\ref{theorem-variation1} and \ref{thm-LDP} as well as Proposition~\ref{prop-variationbeta=0} can be extended to cases where users send out no message or multiple messages. This models the standard situation in which large messages are cut into many smaller ones, who independently find their ways through the system. For this, the trajectory probability space has to be enlarged: to each user $X_i \in X^\lambda$, we attach the number $P_i\in\N_0$ of transmitted messages, and for each $j\in\{1,\dots,P_i\}$, there is an independent trajectory $X_i\to o$. The empirical trajectory measure $R_{\lambda,k}(\cdot)$ must be augmented by these trajectories. The main additional assumption then is that $\sum_{k=1}^{\kmax} \pi_0 R_{\lambda,k}(S)$ converges to some measure $\mu_0 \in \mathcal{M}(W)$ with $0 \neq \mu_0 \ll \mu$. (Also the case that $\mu_0$ is not absolutely continuous with respect to $\mu$ is 
interesting, but will need additional work.) The interference term $\mathfrak S(\cdot)$ introduced in \eqref{realSIRenergy} also has to be changed. According to \cite[Sections 2.3.1, 5.1]{BB09}, the SIR of the transmission of one of the $P_i$ messages from $X_i$ to $x \in W$ should be defined as 
\[ \frac{\ell(|X_i-x|)}{\frac{1}{\lambda} \sum_{j \in I^\lambda} \ell(|X_j-x|)P_j}. \]
One could also incorporate (possibly random) sizes of the messages, which would require an additional enlargement of the trajectory space.

Second, the model of Section~\ref{sec-interference&congestion} can be made time-dependent. If one, e.g., introduces $\kmax$ discrete time slots indexed by $[\kmax]$, and assumes the $l$th hop of any message trajectory to happen at time $l$ for any $l \in [\kmax]$, then the interference of a transmission at time $l$ is obtained from the starting points of all hops that happen at the same time, see \cite[Section I.A]{GK00}. The SIR is defined analogously to \eqref{SIR} but with this notion of interference, which depends on the entire message trajectories rather than only on the users. Further, the congestion term can also be adapted to the time-dependent situation via counting numbers of incoming hops at each time step separately. This variant of the model is indeed mathematically not far from the model treated in the present paper; for example the entropy term does not change.

More realistic and mathematically much more demanding time-dependent versions of our model can be set up in various ways; for example, one could allow for a much longer time horizon (for example, of order $\lambda$, and then dropping the factor of $\lambda$ in the interference term), which must come with the possibility of messages standing still for many separate time units. Furthermore, one could allow users to transmit multiple messages over time. One could also introduce mobility of users similarly to \cite{HJKP15}. The new notion of SIR comes with significant changes in the behaviour of the system in the high-density limit, and we decided to defer such investigations to a later work.

\subsubsection{Allowing an unbounded number of hops}\label{sec-kmaxdiscussion} 

If the upper bound $\kmax$ for the length of the trajectories is dropped, then the {\em a priori} measure defined in  \eqref{referenceprod} has infinite total mass, and therefore the entropy function $\rm I$ is not bounded from below. However, since the function $f_k$ in \eqref{SIRenergy} is bounded from below, for any $\gamma>0,\beta \geq 0$, the total probability mass of all the $k$-hop trajectories under the Gibbs distribution is upper bounded by some geometrically decaying term in $k$. Hence, the definition of the model is no problem for $\kmax= \infty$ and $\gamma>0,\beta \geq 0$. We believe that all our results of Section~\ref{sec-Intro} about its limiting behaviour as $\lambda\to\infty$ remain essentially true (apart from the LDP under the {\em a priori} measure). However, proofs will require an additional cutting argument, which might become rather nasty. Our belief that the results remain unchanged is supported by the fact that also the minimizing objects $\Psi=((\nu_k)_{k=1}^{\kmax},(\mu_m)_{m=0}^{\infty})$ defined in Proposition~\ref{prop-minimizer} enjoy a geometric upper bound for $\nu_k(W^k)$ in $k$. Thus, these measures are also well-defined and form the set of minimizers of the variational formula \eqref{variation} in case $\kmax=\infty$. The assertions of \cite{KT17b} corresponding to the large-distance limit, which we explained in Section~\ref{sec-furtherresults}, are proved also for $\kmax=\infty$, see \cite[Section 3.3.3]{KT17b}. 

\subsubsection{Relation to an optimization problem via Monte Carlo Markov chains}\label{sec-MCMC} 

In the light of the Section~\ref{sec-SIRpenaltydiscussion1}, it is certainly interesting to minimize the cost function $s \mapsto \gamma \mathfrak S(s) + \beta \mathfrak M(s)$ for fixed $\gamma,\beta\in(0,\infty)$. (For game-theoretic properties of this optimization problem, we refer the reader to \cite[Section 7]{KT17b}.) Computationally, this is in general a hard problem for high densities $\lambda$ because the cardinality of $\mathcal S_{\kmax}(X^\lambda)$ increases super-exponentially in $N(\lambda) \asymp \lambda$. Thus, computing all values of $s \mapsto \gamma \mathfrak S(s) + \beta \mathfrak M(s)$  and then extracting the maximum is only feasible for small $\lambda$.
%Now, determining the (given $X^\lambda$ deterministic) set of trajectory collections $s \in \Scal_{\kmax}(X^\lambda)$ minimizing this cost function is easy for $\gamma>\beta=0$ because then the optimization can be performed separately for each user's trajectory. This takes at most a polynomial number of operations in $N(\lambda)$. However, if also $\beta>0$, \emph{a priori}, for the optimization one needs to compute the cost of each trajectory collection in $\mathcal S_{\kmax}(X^\lambda)$. The cardinality of this set increases super-exponentially in $N(\lambda)$, and thus so does the number of operations needed in order to find at least one minimizer of the cost function.
%(Recall that $N(\lambda)$ increases linearly in $\lambda$, almost surely with respect to $(X^\lambda)_{\lambda>0}$.) Hence, the optimization can only be implemented for small $\lambda$.

Now, our Gibbs distribution opens the possibility to optimize this cost function via the well-known approach of {\em simulated annealing}. Furthermore, for $\lambda$ large, it is substantially less complex to realize the Gibbs distribution using Monte Carlo Markov chains than to directly minimize the cost function. Thus, our Gibbsian ansatz provides an easier implementable joint strategy for the routeing of all messages, preferring trajectory collections having low cost function values, already for moderate values of $\gamma,\beta$. 

Indeed, the recent master's thesis of Morgenstern \cite{M18} investigates the computational complexity of realizing the Gibbs distribution $\mathrm P^{\gamma,\beta}_{\lambda,X^\lambda}$ numerically using Monte Carlo Markov chains, for $\lambda,\beta,\gamma>0$. The author finds irreducible and aperiodic Markov chains on the state space $\mathcal S_{\kmax}(X^\lambda)$, both of Gibbs sampler and Metropolis types, having the Gibbs distribution as their stationary distribution. These chains converge towards $\mathrm P^{\gamma,\beta}_{\lambda,X^\lambda}$ as the number of Markovian steps tends to infinity. Using these chains, the number of operations needed in order to simulate the Gibbs distribution up to a given error $\eps>0$ in total variation distance is at most exponential in $\lambda$. This is much more efficient than evaluating all the trajectory collections. In a variant of the Gibbsian model where each user can receive at most a given number $\mmax\in \N_0$ incoming hops, the number of necessary 
operations is even polynomial in $\lambda$.

These Monte Carlo Markov chains can also be used in order to find the optimum of the cost function $s \mapsto \gamma \mathfrak S(s) + \beta \mathfrak M(s)$ for a fixed $\lambda$ and a fixed realization of $X^\lambda$, using simulated annealing. Here, one lets the transition probability of the $t$-th step of the chain depend on $t$ via replacing $(\gamma,\beta)$ by $(\gamma_t,\beta_t)$ such that $\gamma_t,\beta_t \to \infty$ sufficiently slowly as $t \to \infty$. \cite[Theorem 7.1]{M18} shows that if one chooses $\beta_t=\smfrac{\beta}{\gamma} \gamma_t \leq c_0 \log t$ for a suitably chosen $c_0=c_0(\lambda) \asymp \lambda/N(\lambda)^2$, then the Markov chain converges to the uniform distribution on the set of minimizers of the cost function. 

\section{The distribution of the empirical measures} \label{sec-proofingredients}

Having seen in Section~\ref{sec-LDP} that the Gibbsian model can be entirely described in terms of the  trajectory setting $\Psi_\lambda(s)$,  i.e., of the crucial empirical measures $R_{\lambda,k}(s)$ and $P_{\lambda,m}(s)$ defined in \eqref{Rdef}--\eqref{Pdef}, we now consider the question how to describe their distributions. We have to quantify the number of message trajectory families $s$ that give the same family of empirical measures. The plain and short (but wrong) answer is
\begin{equation}\label{wishfulthinking}
\sum_{s\in \mathcal S_{\kmax}(X^\lambda)\colon R_{\lambda,k}(s)=\nu_k \,\forall k,~P_{\lambda,m} (s)=\mu_m\,\forall m} \quad \prod_{i\in I^\lambda}\frac1{N(\lambda)^{s^i_{-1}-1}}\approx \e^{-\lambda \mathrm I(\Psi)},
\end{equation}
where we recall $\mathrm I(\Psi)$ from \eqref{quenchedentropy} and recall that $\Psi=((\nu_k)_{k \in [\kmax]},(\mu_m)_{m\in\N_0})$. From such an assertion, it is indeed not far to conclude Theorem \ref{theorem-variation1}, but the problem is that this statement is not true like this. Actually, there are very many $\Psi$'s such that the left-hand side is equal to zero, for example if any of the $\nu_k$'s or $\mu_m$'s has values outside $\frac 1\lambda \N_0$. However, if we do not consider single $\Psi$'s, but open sets of $\Psi$'s, then the idea behind \eqref{wishfulthinking} is sustainable. Therefore, we proceed in a standard way by decomposing the area $W$ into finitely many subsets and count the message trajectories only according to the discretization sets that they visit. In Section~\ref{sec-discretization} we introduce necessary notation for carrying out this strategy, and we comment on the relevance of the discretization procedure. Next, in Section~\ref{sec-combinatorics}, we derive explicit formulas 
for the distribution of the empirical measures in this discretization.

For the purpose of the present paper, where we consider the high-density limit $\lambda\to\infty$, we later need to take this limit and afterwards the limit as the fineness parameter $\delta$ of the decomposition of $W$ goes to zero. 
The outcome of these parts of the procedure is formulated in Proposition~\ref{prop-combinatorics}. In Proposition~\ref{prop-SIRcomputations} the consequences for the interference term and for the congestion term are formulated. 

\subsection{Our discretization procedure}\label{sec-discretization}

Let us now head towards the formulation of the discretization procedure. We proceed by triadic spatial discretization of the Poisson point process $(X^\lambda)_{\lambda>0}$, similarly to the approach of \cite{HJKP15}. To be more precise, we perform the following discretization argument. Note that we may assume that our communication area $W$ is taken as $W=[-r,r]^d$, by accordingly extending $\mu$ trivially. We write $\mathbb B=\lbrace 3^{-n} \vert n \in \mathbb N_0 \rbrace$.  For $\delta \in \mathbb B$, we define the set
\[ W_\delta = \lbrace [x-r\delta, x+r\delta]^d \colon x \in (2r\delta\mathbb Z)^d \cap W \rbrace \]
of congruent sub-cubes of $W$ of side length $2 r \delta$ and centres in $(2r\delta \Z)^d$. Note that $W_\delta$ is a finite set, $o$ is a centre of an element of $W_\delta$ and any intersection of two distinct elements of $W_\delta$ has zero Lebesgue measure. Elements of $W_\delta$ will be called \emph{$\delta$-subcubes}. We will assume that for all $\delta \in \mathbb B$, the $\delta$-subcubes are canonically numbered as $W^\delta_1,\ldots,W^\delta_{\delta^{-d}}$, which can be done e.g. according to the increasing lexicographic order of the midpoints of the subcubes. Now, for Lebesgue-almost every $x \in W$, for all $\delta \in \mathbb B$ there exists a unique $W^\delta_j$ that contains $x$; let us denote this $W^\delta_j$ by $W^x_\delta$, and the set of all $x \in W$ for which $W^x_\delta$ is well-defined by $W_{\mathbb B}$. %For such $x$, the $\delta$-discretization operator is defined as $\varrho_\delta\colon x \mapsto C(W^x_\delta)$. We will often use the simplified notation $x^\delta=\varrho_\delta(x)$.

Now, if $\nu \in \mathcal{M}(W)$, then for any $\delta \in \mathbb B$, we define $\nu^{\delta}(\cdot)=\nu(\cdot\mid \Fcal_\delta)\in \mathcal{M}(W)$ as the conditional version of $\nu$ given $\Fcal_\delta=\sigma(W_\delta)$, that is, the measure on $W$ that has in each box $W^\delta_i$ a constant Lebesgue density and mass equal to $\nu(W^\delta_i)$. Since $\Fcal_\delta\subset\Fcal_{\delta'}$ for $\delta,\delta'\in\mathbb B$ with $\delta'<\delta$, we see that $(\nu^{\delta})^{\delta'}=(\nu^{\delta'})^{\delta}=\nu^\delta$ by the tower property. We also write $L_\lambda^\delta:=(L_\lambda)^\delta$ for $\lambda>0$ and $\delta \in \mathbb B$, where the empirical measure $L_\lambda$ was defined in \eqref{userempirical}. We proceed analogously for $W^k$, $k \in [\kmax]$ instead of $W$. Note that $\nu^\delta\Longrightarrow \nu$ as $ \delta\downarrow 0$, which can be shown by a martingale convergence argument, since the union of all the $\Fcal_\delta$ generate the Borel-$\sigma$-field on $W$.

Now we are able to define what a \emph{standard setting} is, the interpretation of which will be given right after the definition. Roughly speaking, the measures $\nu_k$ and $\mu_m$ appearing in its definition will later play the role of the measures appearing in \eqref{wishfulthinking}, their $\delta$-approximations are defined as above, and their $(\delta,\lambda)$-versions approach them in the limit $\lambda\to\infty$, followed by $\delta\downarrow 0$. The latter ones satisfy the constraints of \eqref{discreteconstraints} restricted to $\Fcal_\delta^{\otimes k}$ respectively $\Fcal_\delta$, hence, the $\nu_k$ and $\mu_m$ will later turn out to be eligible for the variational problem in  \eqref{variation} (under some mild additional assumption, see Lemma~\ref{lemma-constraints}). 

\begin{defn} \label{defn-standardsetting}
A \emph{standard setting} is a collection of measures  
\begin{equation}\label{standardsetting}
\begin{aligned}
\underline{\Psi}&=\Big( (\nu_k)_{k=1}^{\kmax}, ((\nu_k^{\delta})_{k=1}^{\kmax})_{\delta \in \mathbb B}, ((\nu_k^{\delta,\lambda})_{k=1}^{\kmax})_{\delta \in \mathbb B,\lambda>0},\\
&\quad (\mu_m)_{m=0}^{\infty},((\mu_m^{\delta})_{m=0}^{\infty})_{\delta\in\mathbb B},((\mu_m^{\delta,\lambda})_{m=0}^{\infty})_{\delta\in\mathbb B,\lambda>0},
(\mu^{\delta,\lambda})_{\delta \in \mathbb B,\lambda>0}\Big)
\end{aligned}
\end{equation}
with the following properties: for any $\delta,\delta'\in\mathbb B$, $\lambda>0$, $k\in [\kmax]$, $m \in \mathbb N_0$ and $i,i_0,\ldots,i_{k-1}=1,\ldots,\delta^{-d}$, respectively, $\nu_k\in\Mcal(W^k)$ and $\mu_m\in\Mcal(W)$, and 

\begin{enumerate}
\item \label{muadmissible-standardsetting} $\mu^{\delta,\lambda}=L_\lambda^\delta$. 

\item \label{i-standardsetting} $\nu_k^{\delta,\lambda} \in \mathcal{M}(W^k)$. Further, $\sum_{k=1}^{\kmax} \pi_0 \nu_k^{\delta,\lambda}=\mu^{\delta,\lambda}$, moreover $\lambda \nu_k^{\delta,\lambda}(W^\delta_{i_0} \times \ldots \times W^\delta_{i_{k-1}}) \in \mathbb N_0$. 

\item \label{nukconsistency-standardsetting}  If $\delta' \leq \delta$, then $\nu_k^{\delta',\lambda} (\cdot \mid \Fcal_\delta^{\tensor k})=\nu_k^{\delta,\lambda}(\cdot)$.
%\item\label{nukgoo-standardsetting} for all $\lambda>0$, $\delta \in \mathbb B$ and $k < \kmax+1$ and $s_0,\ldots,s_{k-1}=1,\ldots,\delta^{-d}$, if $(\mu_0^{\delta,\lambda})^{\tensor k}(W^\delta_{s_1} \times \ldots \times W^\delta_{s_{k-1}})=0$, then $\nu_k^{\delta,\lambda} (W^\delta_{s_1} \times \ldots \times W^\delta_{s_{k-1}})=0$, 

\item \label{nukconv-standardsetting} $\nu_k^{\delta,\lambda} \overset{\lambda \to \infty}{\Longrightarrow}\nu_k^{\delta}$.

\item \label{ii-standardsetting} $\mu_m^{\delta,\lambda}\in \mathcal{M}(W)$ with the property that $\sum_{m=0}^{\infty} \mu_m^{\delta,\lambda}=\mu^{\delta,\lambda}$, moreover $\lambda \mu_m^{\delta,\lambda}(W^\delta_i) \in \mathbb N_0$. %Finally, for fixed $\delta \in \mathbb B$ and $\lambda'>\lambda>0$, we have $\lambda' \mu_m^{\delta,\lambda}(A) \geq \lambda \mu_m^{\delta',\lambda}(A)$ for all $A \subseteq W_\delta$,

\item \label{iii-standardsetting} $\sum_{m=0}^{\infty} m \mu_m^{\delta,\lambda} = \sum_{k=1}^{\kmax} \sum_{l=1}^{k-1} \pi_l \nu_k^{\delta,\lambda}$.

\item \label{mumconsistency-standardsetting} If $\delta' \leq \delta$, then $\mu_m^{\delta',\lambda} (\cdot \mid \Fcal_\delta) =\mu_m^{\delta,\lambda}(\cdot)$.

\item \label{mumconv-standardsetting}  $\mu_m^{\delta,\lambda}\overset{\lambda \to \infty}{\Longrightarrow}\mu_m^{\delta}$.

\end{enumerate}
\end{defn}

\begin{remark}\label{remark-standardsetting}
Immediate properties of  a standard setting $\Psi$ are the following. 
\begin{enumerate}[(A)]
 \item\label{muconsistency-standardsetting} If $\delta' \leq \delta$, then $\mu^{\delta',\lambda}(\cdot \mid \Fcal_\delta) =\mu^{\delta,\lambda}(\cdot)$.
 
\item \label{muconv-standardsetting} $\mu^{\delta,\lambda} \overset{\lambda \to \infty}{\Longrightarrow}\mu^{\delta}$ since $L_\lambda \Longrightarrow \mu$ as $\lambda \to \infty$.

\item \label{mudelta-standardsetting} $\mu^{\delta}(\cdot )=\mu (\cdot \mid \Fcal_\delta)$. In particular, $\mu^\delta\overset{\delta\downarrow0}{\Longrightarrow}\mu$.

\item \label{nukdelta-standardsetting} $\nu_k^{\delta}(\cdot)=\nu_k (\cdot \mid \Fcal_\delta^{\tensor k})$. In particular, $\nu_k^\delta\overset{\delta\downarrow0}{\Longrightarrow}\nu_k$.

\item \label{mumdelta-standardsetting} $\mu_m^\delta(\cdot)= \mu_m (\cdot \mid \Fcal_\delta)$. In particular, $\mu_m^\delta\overset{\delta \downarrow 0}{\Longrightarrow}\mu_m$.
\end{enumerate}

\end{remark}

\begin{remark}[Standard settings and message trajectories]\label{remark-setttraj}
The properties of Remark~\ref{remark-standardsetting} explain the meaning of the $\delta$-indexed coordinates of a standard setting. Let us now interpret how one can obtain the $(\delta,\lambda)$-dependent coordinates of a standard setting starting from a fixed trajectory collection $s \in \Scal_{\kmax}(X^\lambda)$. Let
\[ P_\lambda(s)=\frac{1}{\lambda} \sum_{i \in I^\lambda} \delta_{s^i_0} \numberthis\label{mu-explanation} \] 
denote the empirical measure of the starting sites of the trajectories, then $P_\lambda(s)=L_\lambda$, by our assumption that each user is picked precisely once in such a configuration. Hence, its $\delta$-discretized version $P_\lambda^\delta(s)$ equals $\mu^{\delta,\lambda}$. Let us now choose $\nu_k^{\delta,\lambda}=R_{\lambda,k}^\delta(s)$ (recall \eqref{Rdef}). Then the requirement $\sum_{k=1}^{\kmax} \pi_0 \nu_k^{\delta,\lambda}=\mu^{\delta,\lambda}=L_\lambda^\delta$ in \eqref{i-standardsetting} holds by \eqref{discreteconstraints}. Further, let us choose $\mu_m^{\delta,\lambda}=P_{\lambda,m}(s)$ (recall \eqref{Pdef}). Then the constraints $\sum_{m=0}^{\infty} \mu_m^{\delta,\lambda}=\mu^{\delta,\lambda}$  in \eqref{ii-standardsetting} and $\sum_{m=0}^{\infty} m \mu_m^{\delta,\lambda} = \sum_{k=1}^{\kmax} \sum_{l=1}^{k-1} \pi_l \nu_k^{\delta,\lambda}$ in \eqref{iii-standardsetting} also hold by \eqref{discreteconstraints}. Note that all the other requirements of Definition~\ref{defn-standardsetting} are 
satisfied because of the tower property respectively of the convergence of $L_\lambda$ towards $\mu$.
\end{remark}

In the proof of Theorem~\ref{theorem-variation1}, it will be essential to verify that, for certain standard settings, $\sum_{m=0}^{\infty} m\mu_m^\delta(W)$ converges to $\sum_{m=0}^{\infty} m\mu_m(W)$ as $\delta \downarrow 0$, which is not implied by Definition~\ref{defn-standardsetting}. However, similarly to the de la Vallée Poussin theorem about uniform integrability, the super-linear increase of $m \mapsto \eta(m)$ yields the following handy criterion.

\begin{defn} \label{defn-controlledstandardsetting}
A \emph{controlled standard setting} is a standard setting $\underline{\Psi}$ as in \eqref{standardsetting}  with the following extra property:
\begin{equation} \label{third-controlleddef}
\lim_{\lambda \to \infty} \sum_{m=0}^{\infty} \eta(m) \mu_m^{\delta,\lambda}(W)=\sum_{m=0}^{\infty} \eta(m) \mu_m^{\delta}(W)<\infty, \quad \text{for all } \delta \in \mathbb B.
\end{equation}
\end{defn}

Note that by part \eqref{nukdelta-standardsetting} of Remark \ref{remark-standardsetting}, we have $\sum_{k=1}^{\kmax} k \nu_k^\delta(W^k)=\sum_{k=1}^{\kmax} k \nu_k(W^k)$ for any standard setting. Using this, we verify the following lemma.
%The two limits in part \eqref{second-controlleddef} of Definition \ref{defn-controlledstandardsetting} are equal once they are finite. Lemma \ref{lemma-constraints} will show that for a controlled standard setting, the limit in part \ref{second-controlleddef} of Definition \ref{defn-controlledstandardsetting} also equals $\sum_{m=0}^{\infty} m \mu_m(W)$. 

\begin{lemma} \label{lemma-constraints}
Let $\underline{\Psi}$ be a controlled standard setting as in \eqref{standardsetting}. Then $\Psi=((\nu_k)_{k=1}^{\kmax},(\mu_m)_{m=0}^{\infty})$ is an admissible trajectory setting.
\end{lemma}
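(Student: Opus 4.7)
The plan is to verify the three constraints of Definition~\ref{def-admtrajet} for $\Psi=((\nu_k)_{k=1}^{\kmax},(\mu_m)_{m=0}^\infty)$, by passing the analogous constraints \eqref{i-standardsetting}, \eqref{ii-standardsetting}, \eqref{iii-standardsetting} of Definition~\ref{defn-standardsetting} through the iterated limit $\lambda\to\infty$ followed by $\delta\downarrow 0$, using the weak convergences $\nu_k^{\delta,\lambda}\Rightarrow \nu_k^\delta$, $\mu_m^{\delta,\lambda}\Rightarrow \mu_m^\delta$, $\mu^{\delta,\lambda}\Rightarrow\mu^\delta$, and the conditional-expectation convergences $\nu_k^\delta \Rightarrow \nu_k$, $\mu_m^\delta\Rightarrow \mu_m$, $\mu^\delta\Rightarrow \mu$ collected in Remark~\ref{remark-standardsetting}.

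Constraint (i) is the easy one: the sum $\sum_{k=1}^{\kmax}\pi_0\nu_k^{\delta,\lambda}=\mu^{\delta,\lambda}$ involves only finitely many terms, and the marginal map $\pi_0\colon\Mcal(W^k)\to\Mcal(W)$ is weakly continuous, so sending $\lambda\to\infty$ in each summand yields $\sum_{k=1}^{\kmax}\pi_0\nu_k^\delta=\mu^\delta$, and sending $\delta\downarrow 0$ yields (i). The same reasoning handles each $l$-th marginal $\pi_l\nu_k$ appearing in (iii).

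Constraints (ii) and (iii) each involve an infinite sum over $m$, which is where the real work lies. For (ii), the uniform bound $\sum_{m\geq M}\mu_m^{\delta,\lambda}(W)\leq \tfrac1M\sum_{m\geq M}m\,\mu_m^{\delta,\lambda}(W)\leq \tfrac{\kmax-1}{M}\mu^{\delta,\lambda}(W)$, which follows from \eqref{iii-standardsetting} of Definition~\ref{defn-standardsetting} together with the observation $\sum_k\sum_l\pi_l\nu_k^{\delta,\lambda}(W)\leq(\kmax-1)\mu^{\delta,\lambda}(W)$ coming from \eqref{i-standardsetting}, allows one to cut off the $m$-sum at finite level $M$, apply weak convergence termwise as $\lambda\to\infty$ to pair a bounded continuous $f$ against $\sum_{m=0}^{M}\mu_m^{\delta,\lambda}$, and then send $M\to\infty$; this gives $\sum_{m=0}^\infty\mu_m^\delta=\mu^\delta$, after which $\delta\downarrow 0$ yields (ii), using Remark~\ref{remark-standardsetting}(\ref{mumdelta-standardsetting}), (\ref{mudelta-standardsetting}) and that $\mu_m^\delta(W)=\mu_m(W)$.

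The main obstacle, and the one place where the controlled property \eqref{third-controlleddef} is really used, is the tail bound needed in (iii). Here the natural bound $\sum_{m\geq M}m\,\mu_m^{\delta,\lambda}(W)\leq(\sup_{m\geq M}\tfrac{m}{\eta(m)})\sum_m\eta(m)\mu_m^{\delta,\lambda}(W)$ combined with $\eta(m)/m\to\infty$ and the $\lambda$-uniform boundedness of $\sum_m\eta(m)\mu_m^{\delta,\lambda}(W)$ provided by Definition~\ref{defn-controlledstandardsetting} yields a tail that is uniformly small in $\lambda$; thus one may again truncate at finite $M$, pass to the limit $\lambda\to\infty$ by weak convergence against a bounded continuous $f$ (in each of the finitely many $\pi_l\nu_k^{\delta,\lambda}$ as well), and send $M\to\infty$ to obtain $\sum_{m=0}^\infty m\mu_m^\delta=\sum_k\sum_l\pi_l\nu_k^\delta$. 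The same controlled property together with Remark~\ref{remark-standardsetting}(\ref{mumdelta-standardsetting}) passes the tail estimate through $\delta\downarrow 0$, yielding (iii) and completing the proof.
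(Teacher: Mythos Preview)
Your proof is correct and follows essentially the same approach as the paper: verify each of the three constraints in \eqref{constraints} by truncating the infinite $m$-sums, using uniform tail bounds to control the remainder, passing the finite truncations through $\lambda\to\infty$ and $\delta\downarrow 0$ via weak convergence and continuity of marginals, and then sending the truncation level to infinity. One minor difference worth noting: for constraint~(ii) you obtain the tail bound $\sum_{m\geq M}\mu_m^{\delta,\lambda}(W)\leq \tfrac{\kmax-1}{M}\mu^{\delta,\lambda}(W)$ directly from parts \eqref{i-standardsetting} and \eqref{iii-standardsetting} of Definition~\ref{defn-standardsetting}, whereas the paper invokes the controlled property \eqref{third-controlleddef} together with dominated convergence also for (ii). Your route here is marginally more economical, since it shows that (ii) already holds for any standard setting, controlled or not; the controlled hypothesis is genuinely needed only for (iii), as you correctly isolate.
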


\begin{proof}
Part \eqref{i-standardsetting} of Definition \ref{defn-standardsetting} claims that for all $\delta \in \mathbb B$ and $\lambda>0$ we have $\sum_{k=1}^{\kmax} \pi_0 \nu_k^{\delta,\lambda}=\mu^{\delta,\lambda}$. By parts \eqref{muconv-standardsetting} and \eqref{mudelta-standardsetting} of Remark \ref{remark-standardsetting}, we have $\lim_{\delta \downarrow 0} \lim_{\lambda \to \infty} \nu_k^{\delta,\lambda}=\nu_k$ in the weak topology of $\mathcal{M}(W^k)$, for any fixed $k \in [\kmax]$. Similarly, by part \eqref{nukconv-standardsetting}  of Definition \ref{defn-standardsetting} and part \eqref{nukdelta-standardsetting} of Remark \ref{remark-standardsetting}, we have $\lim_{\delta \downarrow 0} \lim_{\lambda \to \infty} \mu^{\delta,\lambda}=\mu$ in the weak topology of $\mathcal{M}(W)$.
Moreover, since taking marginals is a continuous operation, also $\lim_{\delta \downarrow 0} \lim_{\lambda \to \infty} \pi_0 \nu_k^{\delta,\lambda}=\pi_0 \nu_k$ for all $k$ in the weak topology of $\mathcal{M}(W)$. Thus, we have (i) in \eqref{constraints} for $(\nu_k)_{k=1}^{\kmax}$.
In order to see that (ii) holds for $(\mu_m)_{m=0}^{\infty}$, one can additionally use part \eqref{ii-standardsetting} of Definition \ref{defn-standardsetting}, together with \eqref{third-controlleddef} and dominated convergence. 
Finally, by part \eqref{iii-standardsetting} of Definition \ref{defn-standardsetting}, \eqref{third-controlleddef} in Definition \ref{defn-controlledstandardsetting}, the fact that $\lim_{m \to \infty} \eta(m)/m=\infty$ and dominated convergence, we see that for any controlled setting $\underline{\Psi}$, we also have
\[\sum_{m=0}^{\infty} m \mu_m= \lim_{\delta \downarrow 0} \sum_{m=0}^{\infty} m \mu_m^{\delta}= \lim_{\delta \downarrow 0} \lim_{\lambda \to \infty} \sum_{m=0}^{\infty} m \mu^{\delta,\lambda}_m = \lim_{\delta \downarrow 0} \lim_{\lambda \to \infty} \sum_{k=1}^{\kmax} \sum_{l=1}^{k-1} \pi_l \nu_k^{\delta,\lambda} = \sum_{k=1}^{\kmax} \sum_{l=1}^{k-1} \pi_l \nu_k \numberthis\label{second-controlleddef} \]
in the weak topology of $\mathcal{M}(W)$.
This implies (iii) in \eqref{constraints} for $\Psi$. Hence, $\Psi$ is an admissible trajectory setting.
\end{proof}

%Also, the estimate $n \log n-n \lesssim \log(n!) \lesssim (n+1) \log(n+1)-(n+1) \lesssim n^2$ implies that for a controlled setting $\underline \Psi$, for all $\delta \in \mathbb B$ we have
%\[ \lim_{\lambda \to \infty} \sum_{m=0}^{\infty} \log(m!) \mu_m^{\delta,\lambda}(W_\delta) = \sum_{m=0}^{\infty} \log(m!) \mu_m^{\delta}(W_\delta). \numberthis\label{fourth-controlleddef} \]
A converse of Lemma~\ref{lemma-constraints} also holds, in the following sense: for any admissible trajectory setting $\Psi=((\nu_k)_{k=1}^{\kmax},(\mu_m)_{m=0}^{\infty})$, there exists a standard setting containing it, which can be chosen controlled if $\sum_{m \in \N_0} \eta(m) \mu_m(W)<\infty$. This will be the content of Proposition~\ref{prop-standardsettingconstruction}, a preliminary result for Theorems~\ref{theorem-variation1} and \ref{thm-LDP}(i). %Informally speaking, this indicates that for $k \in [\kmax]$, weak convergence of a sequence of measures $(\xi^n)_{n \in \N}$ to some measure $\xi$ in $\mathcal M(W^k)$ in the limit $\lambda \to \infty$ is equivalent to weak convergence of suitably defined $\delta$-discretized versions $(\xi^{\delta,n})_{n \in \N}$ of $(\xi^n)_{n \in \N}$ to $\xi$ as 
%$\lambda \to \infty$ followed by $\delta \downarrow 0$. Thus, 

\subsection{The distribution of the empirical measures}\label{sec-combinatorics}

In this section, we describe the combinatorics of the system. For a standard setting $\underline \Psi$ as in Definition \ref{defn-standardsetting}, let us introduce the configuration set
\begin{equation}\label{numberofconf}
J^{\delta,\lambda}(\underline{\Psi})=\Big\{ s \in \mathcal{S}_{\kmax}(X^\lambda) ~\Big|~ R_{\lambda,k}^\delta(s) = \nu_k^{\delta,\lambda}~\forall k,\quad P_{\lambda,m}^\delta(s)=\mu_m^{\delta,\lambda}~\forall m \Big\} 
\end{equation}
for fixed $\delta \in \mathbb B$ and $\lambda>0$. In words, $ J^{\delta,\lambda}(\underline{\Psi})$ is the set of families of trajectories such that the $\delta$-coarsenings of the empirical measures of the trajectories and the hop numbers are given by the respective measures in the setting $\underline \Psi$. Note that $J^{\delta,\lambda}(\underline{\Psi})$ depends only on the $\delta$-$\lambda$ depending measures in the collection $\underline \Psi$.

In case $\mu^{\delta,\lambda}(W)>0$, we will refer to the entity $s^i_0$, $i=1,\ldots,\lambda \mu^{\delta,\lambda}(W)$, as the $i$th user or $i$th transmitter, the entity $s^i$, $i=1,\ldots,\lambda \mu^{\delta,\lambda}(W)$, as the trajectory of the $i$th user, $s^i_{-1}$ as the length (number of hops) of $s^i$, $s^i_l$ as the $l$-th relay of $s^i$ (for $l=1,\ldots,s^i_{-1}-1$), finally $m_i(s)$ as the number of incoming hops at the relay $s^i_0$.

The combinatorics of computing $\# J^{\delta,\lambda}(\underline{\Psi})$ is given as follows. 

\begin{lemma}[Cardinality of $J^{\delta,\lambda}(\underline{\Psi})$]\label{lem-cardinality}
For any $\delta,\lambda>0$, and for any standard setting $\underline\Psi$, 
\begin{equation}\label{Jcard} 
\# J^{\delta,\lambda}(\underline{\Psi})=N^1_{\delta,\lambda}(\underline{\Psi})\times N^2_{\delta,\lambda}(\underline{\Psi}) \times N^3_{\delta,\lambda}(\underline{\Psi}), 
\end{equation}
where
\begin{eqnarray}
N^1_{\delta,\lambda}(\underline\Psi)&=&\prod_{i=1}^{\delta^{-d}} \binom{\lambda \mu^{\delta,\lambda}(W^\delta_i)}{((\lambda \nu_k^{\delta,\lambda}(W^\delta_i \times W^\delta_{i_1} \times \ldots \times W^\delta_{i_{k-1}}))_{i_1,\ldots,i_{k-1}=1}^{\delta^{-d}})_{k=1}^{\kmax}}, \label{comb-multinomial}\\
N^2_{\delta,\lambda}( \underline\Psi)&=&\prod_{i=1}^{\delta^{-d}} \binom{\lambda \mu^{\delta,\lambda}(W^\delta_i)}{(\lambda \mu_m^{\delta,\lambda}(W^\delta_i))_{m \in \mathbb N_0}},\label{comb-littlemultinomial} \\
 N^3_{\delta,\lambda}(\underline\Psi)&=&\prod_{i=1}^{\delta^{-d}} \frac{\left(\lambda \sum_{k=1}^{\kmax} \sum_{l=1}^{k-1} \pi_l \nu_k^{\delta,\lambda}(W^\delta_{i})\right)!}{\prod_{m=0}^{\infty} m!^{\lambda \mu_m^{\delta,\lambda}(W^\delta_i)}}
 =\prod_{i=1}^{\delta^{-d}} \frac{\left(\lambda \sum_{m=0}^{\infty} m \mu_m^{\delta,\lambda} (W^\delta_{i})\right)!}{\prod_{m=0}^{\infty} m!^{\lambda \mu_m^{\delta,\lambda}(W^\delta_i)}}.\label{comb-lambdaloglambdaterm}
\end{eqnarray}
\end{lemma}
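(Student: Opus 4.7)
The plan is to decompose any configuration $s\in J^{\delta,\lambda}(\underline{\Psi})$ into three pieces of logically independent information whose realizations can be counted separately: (A) for each user the coarsened (subcube-valued) trajectory it follows, (B) for each user the number $m$ of incoming hops, and (C) within each $\delta$-subcube, the concrete assignment of the hops landing there to its specific inhabitants. Together (A)--(C) determine $s$, so $\#J^{\delta,\lambda}(\underline{\Psi})$ will be the product of three counts, one per step.

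First I would unpack the conditions in \eqref{numberofconf}. The constraint $R_{\lambda,k}^\delta(s)=\nu_k^{\delta,\lambda}$ for all $k$ says that within each subcube $W^\delta_{i_0}$, for every tuple $(k,i_1,\ldots,i_{k-1})$, exactly $\lambda\nu_k^{\delta,\lambda}(W^\delta_{i_0}\times\cdots\times W^\delta_{i_{k-1}})$ users have a length-$k$ trajectory whose consecutive relays lie in $W^\delta_{i_1},\ldots,W^\delta_{i_{k-1}}$; similarly $P_{\lambda,m}^\delta(s)=\mu_m^{\delta,\lambda}$ says exactly $\lambda\mu_m^{\delta,\lambda}(W^\delta_i)$ users in $W^\delta_i$ receive $m$ incoming hops. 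By parts \eqref{i-standardsetting} and \eqref{ii-standardsetting} of Definition~\ref{defn-standardsetting} all these quantities are in $\N_0$ and their marginals over $(k,i_1,\ldots,i_{k-1})$, respectively over $m$, sum in each case to $\lambda\mu^{\delta,\lambda}(W^\delta_i)$. Step (A) then amounts to partitioning the $\lambda\mu^{\delta,\lambda}(W^\delta_{i_0})$ users starting in $W^\delta_{i_0}$ into groups of prescribed sizes indexed by $(k,i_1,\ldots,i_{k-1})$, giving the multinomial inside the product in \eqref{comb-multinomial}; the product over $i_0$ yields $N^1_{\delta,\lambda}(\underline{\Psi})$. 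Analogously, Step (B) partitions the users of each $W^\delta_i$ according to their hop-count $m$, yielding $N^2_{\delta,\lambda}(\underline{\Psi})$. Since (A) depends only on the spatial shape of trajectories and (B) only on relay load per user, the two choices are made independently and their counts multiply.

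For Step (C), once (A) has been fixed the total number of (distinguishable) incoming hops landing in $W^\delta_i$ equals $\lambda\sum_k\sum_{l=1}^{k-1}\pi_l\nu_k^{\delta,\lambda}(W^\delta_i)$, which by condition \eqref{iii-standardsetting} of Definition~\ref{defn-standardsetting} agrees with $\lambda\sum_{m}m\mu_m^{\delta,\lambda}(W^\delta_i)$; this matching is precisely what makes the choices in (A) and (B) jointly realizable, and it is also what gives the second equality in \eqref{comb-lambdaloglambdaterm}. Given (A) and (B), the number of ways to assign these distinguishable hops to the distinguishable users of $W^\delta_i$ so that a user of prescribed load $m$ is targeted exactly $m$ times is the multinomial $(\lambda\sum_m m\mu_m^{\delta,\lambda}(W^\delta_i))!/\prod_m(m!)^{\lambda\mu_m^{\delta,\lambda}(W^\delta_i)}$, and taking the product over $i$ produces $N^3_{\delta,\lambda}(\underline{\Psi})$. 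Multiplying the three counts gives \eqref{Jcard}.

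The only real point that needs care — and the main obstacle, such as it is — is the verification that (A)--(C) genuinely decompose $J^{\delta,\lambda}(\underline{\Psi})$: every configuration fixes unique data for all three steps, any combination of admissible choices in (A), (B), (C) can be assembled into a valid configuration, and different combinations yield different configurations. All three checks reduce to the compatibility identity \eqref{iii-standardsetting}, which equates the hop totals generated by (A) with the load totals set by (B) in every subcube; everything else is routine labelled bookkeeping.
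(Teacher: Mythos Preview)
Your proposal is correct and follows essentially the same three-step decomposition as the paper: (A) assigning to each user in every subcube its coarsened trajectory type (yielding $N^1$), (B) assigning to each user its incoming-hop count $m$ (yielding $N^2$), and (C) distributing the distinguishable incoming hops in each subcube among its inhabitants with prescribed loads (yielding $N^3$), with condition~\eqref{iii-standardsetting} guaranteeing compatibility and the second equality in~\eqref{comb-lambdaloglambdaterm}. The paper's justification of $N^3$ is phrased via replacing each $m$-relay by $m$ virtual copies and then quotienting out the overcount, whereas you phrase it directly as a multinomial assignment; these are of course the same count.
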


\begin{proof} We proceed in three steps by counting first the trajectories, registering only the partition sets $W_i^\delta$ that they travel through, second, for each $m\in\N_0$, the sets of relays in each partition set that receive precisely $m$ ingoing hops and finally the choices of the relays for each hop in each partition set. Since every choice in the three steps can be freely combined with the other ones, the product of the three cardinalities is equal to the number of all trajectory configurations with the requested coarsened empirical measures.

\begin{enumerate}[(A)]
\item \label{first-combproof} \emph{Number of the transmitters of trajectories passing through given sequences of $\delta$-subcubes.} For each configuration $s \in J^{\delta,\lambda}(\underline{\Psi})$ defined in \eqref{numberofconf}, in each $\delta$-subcube $W^\delta_{i}$, $i=1,\ldots,\delta^{-d}$, there are $\lambda \mu^{\delta,\lambda}(W^\delta_i)$ users. Out of them exactly $\lambda \nu_k^{\delta,\lambda}(W^\delta_i \times W^\delta_{i_1} \times \ldots W^\delta_{i_{k-1}})$ take exactly $k$ hops, having their first relay in $W^\delta_{i_1}$, their second in $W^\delta_{i_2}$ etc.\ and their $(k-1)$st relay in $W^\delta_{i_{k-1}}$, for any $k \in [\kmax]$ and $i_1,\ldots,i_{k-1}=1,\ldots,\delta^{-d}$. Such choices in different sub-cubes $W^\delta_i$ corresponding to the transmitters are independent. Thus, the total number of such choices equals the number $N^1_{\delta,\lambda}(\underline\Psi)$ defined in \eqref{comb-multinomial}. Note that for $i=1,\ldots,\delta^{-d}$, 
\[ \sum_{k=1}^{\kmax} \sum_{i_1,\ldots,i_{k-1}=1}^{\delta^{-d}} \nu_k^{\delta,\lambda}(W^\delta_i \times W^\delta_{i_1} \times \ldots \times W^\delta_{i_{k-1}}) =\sum_{k=1}^{\kmax} \pi_0 \nu_k^{\delta,\lambda}(W^\delta_i)=\mu^{\delta,\lambda}(W^\delta_i), \] 
where we used part \eqref{i-standardsetting} of Definition \ref{defn-standardsetting}; hence the multinomial expressions in \eqref{comb-multinomial} are well-defined.

\item \label{second-combproof} \emph{Number of incoming hops.} In this step, for any $\delta$-subcube $W^\delta_i$, we count all the possible ways to distribute the incoming hops among the relays ($=$ users) $X_j \in W^\delta_i$, under the two constraints that in $W^\delta_i$ there are $\lambda \mu^{\delta,\lambda}(W^\delta_i)$ potential relays, and for any $m \in \mathbb N_0$, exactly $\lambda \mu_m^{\delta,\lambda}(W^\delta_i)$ receive precisely $m$ incoming hops. Such choices are clearly independent of each other for different $\delta$-subcubes. Hence, the total number of such choices equals the number $N^2_{\delta,\lambda}( \underline\Psi)$ defined in \eqref{comb-littlemultinomial}. Again, the constraint \eqref{ii-standardsetting} from Definition \ref{defn-standardsetting} implies that the multinomial expression \eqref{comb-littlemultinomial} is well-defined. Clearly, all choices in this part are independent of the choices in part \eqref{first-combproof}.

\item \label{third-combproof} \emph{Number of assignments of the hops to the relays.} 
Assume that we have chosen one possible choice in part \eqref{first-combproof} and one possible choice in part \eqref{second-combproof}. We now derive the number of possible ways of distributing, for any $i$, all the incoming hops in $W^\delta_i$ among the relays in $W^\delta_i$. Let us call this number $M_i$, then we know from part \eqref{first-combproof} that $M_i=\lambda \sum_{k=1}^{\kmax} \sum_{l=1}^{k-1} \pi_l \nu_k^{\delta,\lambda} (W^\delta_{i}) $, since each such hop is the $l$-th of some of the trajectories for some $l$. The cardinality of the set of relays in $W_i^\delta$ is equal to $\lambda  \mu^{\delta,\lambda}(W_i^\delta)=\lambda \sum_{m=0}^{\infty} \mu_m^{\delta,\lambda}(W_i^\delta)$, and in part \eqref{second-combproof} we decomposed it into sets, indexed by $m$, in which each relay receives precisely $m$ ingoing hops. Let us call such a relay an $m$-relay. Think of each such relay as being replaced by precisely $m$ copies (in particular those with $m=0$ are discarded), then we have $\lambda \sum_{m=0}^{\infty} m \mu_m^{\delta,\lambda}(W^\delta_i)$ virtual relays in $W_i^\delta$. (Note that 
this is equal to $M_i$ by \eqref{iii-standardsetting}.) Now, if all these $m$ copies of the $m$-relays were distinguishable, then the number of ways to distribute the $M_i$ ingoing hops to the relays would be simply equal to $M_i!$. However, since these $m$ copies are identical, we overcount by a factor of $m!$ for any $m$-relay.  This means that the number of hops into $W_i^\delta$ is equal to $M_i!/\prod_{m=0}^{\infty}(m!)^{\lambda\mu_m^{\delta,\lambda}(W_i^\delta)}$. Since all these cardinalities can freely be combined with each other, we have deduced that the number of possible choices is equal to the number $N^3_{\delta,\lambda}(\underline\Psi)$ defined in \eqref{comb-lambdaloglambdaterm}. 
\end{enumerate} 

We also see that all  the choices in the three parts are independent of each other, i.e., can be freely combined with each other and yield different combinations. Hence, we arrived at the assertion.
\end{proof}

\section{The limiting free energy and the LDP: proof of Theorems~\ref{theorem-variation1} and \ref{thm-LDP}}\label{sec-prooflargelambda}

In this section, we prove Theorems~\ref{theorem-variation1} and \ref{thm-LDP}(i), that is, we derive the variational formula in \eqref{variation} for the high-density (i.e., $\lambda\to\infty$) exponential rate of the partition function, and we verify the LDP for the empirical measures. Our first step is to derive the large-$\lambda$ exponential rate of the combinatorial formulas for  the empirical measures of Lemma~\ref{lem-cardinality} in Section~\ref{sec-asymptotics}. Furthermore, in Section~\ref{sec-SIRproof} we formulate and prove how the interference term and the congestion term behave in the limits $\lambda\to\infty$, followed by $\delta\downarrow0$. In Section~\ref{sec-admtrajetstandardsetting}, given an admissible trajectory setting, we construct a standard setting containing it. Using all these, in Section~\ref{sec-varproof} we prove Theorem~\ref{theorem-variation1}, and in Section~\ref{sec-LDPproof}, we complete the proof of Theorem~\ref{thm-LDP}(i).

For the rest of this section, we fix the set $\Omega_1 \subset \Omega$ of full $\P$-measure on which we do our quenched investigations:
\begin{equation}\label{Omega1def}
\begin{aligned}
\Omega_1 &= \Big\{\omega \in \Omega \colon X_i(\omega) \in W_{\mathbb B}\ \forall i\in\N,\\
&\qquad \lim_{\lambda \to \infty} \frac{\#\lbrace i \in I^\lambda(\omega) \colon X_i(\omega) \in W^\delta_j \rbrace}{\lambda}=\mu(W^\delta_j),~\forall j=1,\ldots,\delta^{-d},~\forall \delta \in \mathbb B\Big\}.
\end{aligned}
\end{equation}
That $\mathbb P(\Omega_1)=1$ holds follows immediately from the Restriction Theorem \cite[Section 2.2]{K93} combined with the Poisson Law of Large Numbers \cite[Section 4.2]{K93} and the fact that $\mu$ is absolutely continuous.

\subsection{The asymptotics of the combinatorics}\label{sec-asymptotics}

Let us fix a controlled standard setting $\underline{\Psi}$ as in \eqref{standardsetting}. Fix any $\omega \in \Omega_1$, and let the quantities $I^\lambda$ and $X^\lambda$ refer to this $\omega$. %In particular, it is sufficient to consider the case $\mu^{\delta,\lambda}(W_\delta)>0$%I DO NOT UNDERSTAND.
Denote
\begin{equation}\label{N0def}
N^0_{\delta,\lambda}(\underline\Psi)=\prod_{i=1}^{\delta^{-d}} \prod_{k=1}^{\kmax} \prod_{l=1}^{k-1}  N(\lambda)^{\lambda \pi_l \nu_k^{\delta,\lambda}(W^\delta_i)}.
\end{equation}
For a measurable subset $V$ of $\R^d$ and $\nu,\widetilde \nu \in \Mcal(V)$,  let us write $H_V(\nu|\widetilde \nu) = \int_V \d \nu \log \smfrac{\d \nu}{\d \widetilde \nu}$ if the density $\smfrac{\d \nu}{\d \widetilde \nu}$ exists and $H_V(\nu|\widetilde \nu) =\infty$ otherwise. (The difference between $H_V(\nu|\widetilde \nu)$ and the relative entropy $\Hcal_V(\nu|\widetilde \nu)$ defined in \eqref{relativeentropy} is the additive term $\widetilde \nu(V)-\nu(V)$.) Let us recall $M = \sum_{k=1}^{\kmax} \sum_{l=1}^{k-1} \pi_l \nu_k = \sum_{m=0}^{\infty} m \mu_m$ from \eqref{constraints} and $c_m=\exp(-1/(\e\mu(W))(\e\mu(W))^{-m}/m! $ from \eqref{quenchedentropy}. Note that the rate function $\mathrm I$ defined in \eqref{quenchedentropy} has also the representation
\begin{equation} \label{I-proof}
 \mathrm I(\Psi)= \sum_{k=1}^{\kmax}H_{W^k}(\nu_k\mid\mu^{\otimes k}) -H_{W}( M | \mu )+\sum_{m=0}^{\infty} H_{W}(\mu_m \mid \mu c_m)-\frac1\e,
\end{equation}
which we are going to use here. The equivalence between \eqref{I-proof} and \eqref{quenchedentropy} will be verified in Section~\ref{sec-entropyrepresentation} of the Appendix. Recall \eqref{mumentropymassidentity}, which implies that the third term in \eqref{I-proof} is invariant under replacing $H$ by $\mathcal H$. We now identify the large-$\lambda$ exponential rate of  the cardinality of $J^{\delta,\lambda}(\underline{\Psi})$ both on the scale $\lambda\log\lambda$ and $\lambda$:

\begin{prop}[Exponential rates of counting terms]\label{prop-combinatorics}
Let $\underline{\Psi}$ be a controlled standard setting. Let us write $\Psi=((\nu_k)_{k=1}^{\kmax},(\mu_m)_{m=0}^{\infty})$. We have
\[ \lim_{\delta \downarrow 0} \lim_{\lambda \to \infty} \frac{1}{\lambda} \log \frac{\# J^{\delta,\lambda}(\underline{\Psi})}{N^0_{\delta,\lambda}(\underline\Psi)} =-\mathrm I(\Psi), \]
as an identity in $[0,\infty]$. Moreover if $\mathrm I(\Psi)<\infty$, then
\begin{align*} & \lim_{\delta \downarrow 0} \lim_{\lambda \to \infty} \frac{1}{\lambda\log\lambda} \log \# J^{\delta,\lambda}(\underline{\Psi}) = M(W)<\infty, 
\end{align*}
almost surely.
\end{prop}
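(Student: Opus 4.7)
The plan is to apply Stirling's formula to the three factors in Lemma~\ref{lem-cardinality} individually, identify the exponential rate of each one in the limit $\lambda\to\infty$, and only at the end pass to the limit $\delta\downarrow 0$. The representation \eqref{I-proof} of $\mathrm I(\Psi)$ is the one that will emerge naturally from this procedure; the equivalence with \eqref{quenchedentropyoncemore} is a separate algebraic check (deferred to Section~\ref{sec-entropyrepresentation}).

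I would first handle the $\lambda\log\lambda$ scale. The multinomial coefficients $N^1_{\delta,\lambda}(\underline\Psi)$ and $N^2_{\delta,\lambda}(\underline\Psi)$ have logarithms of order only $\lambda$: the $\lambda\log\lambda$-terms in their Stirling expansion cancel by virtue of the mass-conservation identities in parts~\eqref{i-standardsetting} and~\eqref{ii-standardsetting} of Definition~\ref{defn-standardsetting}. Hence the $\lambda\log\lambda$-contribution comes entirely from the factorials $M_i!$ with $M_i=\lambda M^{\delta,\lambda}(W_i^\delta)$ appearing in $N^3_{\delta,\lambda}$, while the denominators $(m!)^{\lambda\mu_m^{\delta,\lambda}(W_i^\delta)}$ contribute only $O(\lambda)$ by Stirling, together with the exchange of summation over $m$ and $\lambda\to\infty$ permitted by the controlled-setting assumption \eqref{third-controlleddef}. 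Summing over $i$ yields $\frac{1}{\lambda\log\lambda}\log\#J^{\delta,\lambda}(\underline\Psi)\to M^\delta(W)=M(W)$, the last equality because the conditioning $\mu_m\mapsto\mu_m^\delta$ preserves total mass. When $\mathrm I(\Psi)<\infty$, Lemma~\ref{lemma-constraints} applies, and constraints~(i) and~(iii) of \eqref{constraints} give $M(W)\le(\kmax-1)\mu(W)<\infty$.

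Next I would pass to the $\lambda$-scale. Writing $\log N(\lambda)=\log\lambda+\log(N(\lambda)/\lambda)$, on $\Omega_1$ one has $N(\lambda)/\lambda\to\mu(W)$, so the term $\lambda M^{\delta,\lambda}(W)\log\lambda$ hiding inside $\log N^3_{\delta,\lambda}$ is exactly matched by the corresponding term in $\log N^0_{\delta,\lambda}$, leaving a finite remainder of order $\lambda$. Extracting the $O(\lambda)$ coefficients from Stirling in each of $N^1,N^2,N^3,N^0$ and regrouping, I expect to obtain three discretised relative-entropy expressions: one for $\nu_k^\delta$ against $(\mu^\delta)^{\otimes k}$, one for $M^\delta$ against $\mu^\delta$, and one for $\mu_m^\delta$ against $\mu^\delta c_m$. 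The additive constant $-1/\e$ and the Poisson weights $c_m$ are tailor-made to absorb the $-\sum_m\mu_m^{\delta,\lambda}(W_i^\delta)\log m!$ contributions from $N^3_{\delta,\lambda}$. The outcome should be $\frac1\lambda\log\bigl(\#J^{\delta,\lambda}(\underline\Psi)/N^0_{\delta,\lambda}\bigr)\to-\mathrm I^\delta(\Psi)$, the $\Fcal_\delta$-discretised version of \eqref{I-proof}.

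Finally, I would pass $\delta\downarrow 0$. Each term of $\mathrm I^\delta(\Psi)$ is a relative entropy on a tensor power of $\Fcal_\delta$, and since the union of all $\Fcal_\delta$ generates the full Borel $\sigma$-algebra on $W$, the standard martingale-type monotonicity of relative entropy \cite[Section~6.2]{DZ98} yields $\mathrm I^\delta(\Psi)\to\mathrm I(\Psi)$ in $(-\infty,\infty]$; the sum over $m$ is again controlled via the super-linear growth of $\eta$, giving dominated convergence. The hard part will be the bookkeeping in the $\lambda$-scale step: tracking how the many Stirling contributions regroup into precisely the three relative entropies, the linear term $\mu(W)(1-\sum_k M(W)^{k-1})$ (arising from the mixed powers of $M^{\delta,\lambda}$ and $\mu^{\delta,\lambda}$), and the additive constant $-1/\e$, using only the constraint identities of Definition~\ref{def-admtrajet}. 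No new probabilistic input is needed beyond Stirling's formula, but the reorganisation is intricate, which is precisely why one prefers to carry out the counting with the form \eqref{I-proof} and separately check its algebraic equivalence with \eqref{quenchedentropyoncemore}.
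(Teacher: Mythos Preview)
Your proposal is correct and follows essentially the same route as the paper: Stirling on each of $N^1_{\delta,\lambda},N^2_{\delta,\lambda},N^3_{\delta,\lambda}/N^0_{\delta,\lambda}$, identification of the $\lambda\to\infty$ rate as the $\Fcal_\delta$-discretised version of \eqref{I-proof}, then $\delta\downarrow0$ via monotone convergence of relative entropies along the filtration (the paper cites \cite[Proposition~(15.6)]{G11} rather than \cite{DZ98}, but either works). One small slip: in your final paragraph you list ``the linear term $\mu(W)(1-\sum_k M(W)^{k-1})$'' among the pieces to be extracted, but that term belongs to the representation \eqref{quenchedentropyoncemore}, not to \eqref{I-proof}; the combinatorics yields \eqref{I-proof} directly (three $H$-terms plus $-1/\e$), and the linear term only appears after the algebraic rewriting in Section~\ref{sec-entropyrepresentation}, exactly as you say elsewhere.
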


\begin{proof}
Recall that $\Psi$ is an admissible trajectory setting, according to Lemma \ref{lemma-constraints}. In particular, $\mathrm I(\Psi) \in (-\infty,\infty]$ is well-defined.

We use Stirling's formula $\lambda !=(\lambda/\e)^\lambda \e^{o(\lambda)}$ in the limit $\N \ni \lambda \to \infty$, which leads to
\begin{equation}\label{Stirling}
\lim_{\lambda\to\infty}\frac 1\lambda\log \binom {a^{\ssup\lambda}}{a^{\ssup\lambda}_1,\dots, a^{\ssup\lambda}_n}=-\sum_{i=1}^n a_i\log \frac {a_i}a,
\end{equation}
for any integers $a^{\ssup\lambda}_1,\dots, a^{\ssup\lambda}_n$ that sum up to $a^{\ssup\lambda}$ and satisfy $\frac1\lambda a_i^{\ssup\lambda}\overset{\lambda\to\infty}{\to}a_i$ for $i=1,\dots,n$ with positive numbers $a_1,\dots,a_n$ satisfying  $\sum_{i=1}^n a_i=a$.

From \eqref{comb-multinomial} we obtain that
\begin{align*}
   I^1_\delta(\underline\Psi)&=-  \lim_{\lambda\to\infty}\frac 1\lambda\log N^1_{\delta,\lambda}( \underline\Psi) \\ 
   & =\sum_{i=1}^{\delta^{-d}} \sum_{k=1}^{\kmax} \sum_{i_1,\ldots,i_{k-1}=1}^{\delta^{-d}}  \nu_k^{\delta} (W^\delta_i \times W^\delta_{i_1} \times \ldots \times W^\delta_{i_{k-1}}) \log \frac{\nu_k^{\delta} (W^\delta_i\times W^\delta_{i_1} \times \ldots \times W^\delta_{i_{k-1}})}{\mu^{\delta}(W^\delta_i) },
\end{align*}
where we also used that all the measures $ \nu_k^{\delta,\lambda}$ and $\mu^{\delta,\lambda}$ converge as $\lambda\to\infty$ to  $ \nu_k^{\delta}$ and $\mu^{\delta}$, respectively.

Now we add the term $\prod_{l=1}^{k-1} \mu^{\delta}(W^\delta_{i_{l}})$  both in the numerator and the denominator under the logarithm and separate these two terms. In the former, we write its logarithm as $\sum_{l=1}^{k-1} \log \mu^{\delta}(W^\delta_{i_{l}})$, interchange this sum on $l$ with all the other sums on the $i_0,\dots,i_{k-1}$ and write the sums over $i_0,\ldots,i_{l-1}, i_{l+1},\ldots,i_{k-1}$ in terms of the $l$-th marginal measure of $\nu_k^{\delta}$. This gives
\begin{align*}
I^1_\delta(\underline\Psi)&= \sum_{i=1}^{\delta^{-d}} \sum_{k=1}^{\kmax} \sum_{i_1,\ldots,i_{k-1}=1}^{\delta^{-d}} \nu_k^{\delta} (W^\delta_i \times W^\delta_{i_1} \times \ldots \times W^\delta_{i_{k-1}})   \log \frac{\nu_k^{\delta} (W^\delta_i\times W^\delta_{i_1} \times \ldots \times W^\delta_{i_{k-1}})}{\mu^{\delta}(W^\delta_i) \prod_{l=1}^{k-1} \mu^{\delta}(W^\delta_{i_{l}})}  \\ 
& \quad +\sum_{i=1}^{\delta^{-d}} \sum_{k=1}^{\kmax} \sum_{l=1}^{k-1} \pi_l \nu_k^{\delta}(W^\delta_i) \log \mu^{\delta}(W^\delta_i)  . \numberthis\label{comb-largelambdastartingpoint}
\end{align*}
In the same way as for $I_1^\delta$, we obtain
\begin{align*}
I^2_\delta(\underline\Psi)&=-  \lim_{\lambda\to\infty}\frac 1\lambda\log N^2_{\delta,\lambda}(\underline\Psi) =\sum_{i=1}^{\delta^{-d}} \sum_{m=0}^{\infty} \mu_m^\delta(W^\delta_i) \log \frac{ \mu_m^\delta(W^\delta_i)}{\mu^\delta(W^\delta_i)}. 
\numberthis\label{N2exprate}
\end{align*}

Using \eqref{Omega1def}, on $\Omega_1$ we have that the asymptotic behaviour of \eqref{N0def} is the following
\[ N^0_{\delta,\lambda}(\underline\Psi)=N(\lambda)^{\lambda \sum_{i=1}^{\delta^{-d}} \sum_{k=1}^{\kmax} \sum_{l=1}^{k-1} \pi_l \nu_k^{\delta,\lambda}(W^\delta_i)} 
= {(\lambda \mu(W))}^{\lambda (1+o(1)) \sum_{i=1}^{\delta^{-d}} \sum_{k=1}^{\kmax} \sum_{l=1}^{k-1} \pi_l \nu_k^{\delta,\lambda}(W^\delta_i)}. \] 
On the other hand, also by Stirling's formula, we can identify the large-$\lambda$ rate of the quotient of the counting terms in \eqref{comb-lambdaloglambdaterm} and \eqref{N0def} as follows:
\begin{equation}\label{comb-largelambdarelay}
\begin{aligned}
I^{3,0}_\delta(\underline \Psi)&=-\lim_{\lambda\to\infty}\frac 1\lambda\log \frac{N^3_{\delta,\lambda}(\underline\Psi)}{N^0_{\delta,\lambda}(\underline\Psi)}\\
&=-\lim_{\lambda\to\infty}\frac 1\lambda\log \prod_{i=1}^{\delta^{-d}} \frac{\big(\smfrac 1{\e \mu(W)}\sum_{k=1}^{\kmax} \sum_{l=1}^{k-1} \pi_l \nu_k^{\delta,\lambda} (W^\delta_{i})\big)^{\lambda \sum_{k'=1}^{\kmax} \sum_{l'=1}^{k'-1} \pi_{l'} \nu_{k'}^{\delta,\lambda} (W^\delta_{i})}}{\prod_{m=0}^{\infty} m!^{\lambda \mu_m(W^\delta_i)}}\\ 
&=-\sum_{i=1}^{\delta^{-d}} \sum_{k'=1}^{\kmax} \sum_{l'=1}^{k'-1} \pi_{l'} \nu_{k'}^{\delta} (W^\delta_{i}) \left(\log\sum_{k=1}^{\kmax} \sum_{l=1}^{k-1} \pi_l \nu_k^{\delta} (W^\delta_{i}) -(1+\log \mu(W)) \right) \\ 
& \qquad + \sum_{i=1}^{\delta^{-d}} \sum_{m=0}^{\infty}  \mu_m^{\delta}(W^\delta_i)\,\log(m!),
\end{aligned}
\end{equation}
where for the last term we used the fact that $\underline{\Psi}$ is controlled (see also Lemma \ref{lemma-constraints}), together with dominated convergence. We can summarize the sum of the terms in \eqref{comb-largelambdastartingpoint}, \eqref{N2exprate} and \eqref{comb-largelambdarelay} as
\begin{equation}\label{Jexprate}
\begin{aligned}
-\lim_{\lambda\to\infty}\frac1\lambda\log \frac{\# J^{\delta,\lambda}(\underline{\Psi})}{N^0_{\delta,\lambda}(\underline\Psi)}
&=I^{1}_\delta(\underline \Psi)+I^{2}_\delta(\underline \Psi)+I^{3,0}_\delta(\underline \Psi)\\
&= \sum_{k=1}^{\kmax} \sum_{i_0,\ldots,i_{k-1}=1}^{\delta^{-d}} \nu_k^{\delta} (W^\delta_{i_0} \times \ldots \times W^\delta_{i_{k-1}})   \log \frac{\nu_k^{\delta} (W^\delta_{i_0} \times \ldots \times W^\delta_{i_{k-1}})}{\prod_{l=0}^{k-1} \mu^{\delta}(W^\delta_{i_{l}})}  
\\ &\quad +\sum_{i=1}^{\delta^{-d}} \sum_{m=0}^{\infty} \mu_m^\delta(W^\delta_i) \Big( \log \frac{ \mu_m^\delta(W^\delta_i)}{\mu^\delta(W^\delta_i)} + m(1+\log \mu(W))+\log(m!) \Big) \\
&\quad- \sum_{i=1}^{\delta^{-d}} \left( \sum_{k=1}^{\kmax} \sum_{l=1}^{k-1} \pi_{l} \nu_{k}^{\delta} (W^\delta_{i})\right) \log \frac{\sum_{k=1}^{\kmax} \sum_{l=1}^{k-1} \pi_l \nu^\delta_k (W^\delta_{i})}{\mu^{\delta}(W^\delta_i)},
\end{aligned}
\end{equation}
where in the first line on the right-hand side we changed the summing index $i$ into $i_0$. Since we have \[ \sum_{m=0}^{\infty} \mu_m^\delta(W)=\sum_{m=0}^{\infty}\mu_m(W)=\mu(W), \] and thus
\[ \sum_{i=1}^{\delta^{-d}} \sum_{m=0}^{\infty} \mu_m^\delta(W^\delta_i) \Big( \log \frac{ \mu_m^\delta(W^\delta_i)}{\mu^\delta(W^\delta_i)} + m(1+\log \mu(W))+\log(m!) \Big) = \sum_{m=0}^{\infty} \mu_m^\delta(W^\delta_i) \log \frac{ \mu_m^\delta(W^\delta_i)}{c_m \mu^\delta(W^\delta_i)} -\frac1\e, \] 
we obviously arrived at the discrete version of the entropy terms in \eqref{I-proof}, more precisely, the entropy of the measures in \eqref{I-proof} with respect to the $\sigma$-field $\Fcal_\delta$, respectively $\Fcal^{\otimes k}_\delta$. Now, according to \cite[Proposition (15.6)]{G11}, the limit of these entropies as $\delta\downarrow 0$ is equal to their corresponding continuous version, i.e., the right-hand side of \eqref{Jexprate} converges to ${\rm I}(\Psi)$. The first part of Proposition \ref{prop-combinatorics} follows.

Moreover, if $\mathrm I(\Psi)<\infty$, then we have by continuity
\begin{align*} 
& \lim_{\delta \downarrow 0} \lim_{\lambda \to \infty} \frac{1}{\lambda \log \lambda} \log \# J^{\delta,\lambda}( \underline{\Psi} ) = \lim_{\delta \downarrow 0} \lim_{\lambda \to \infty} \frac{1}{\lambda \log \lambda} \log N^0_{\delta.\lambda}( \underline\Psi )\\ 
& =  \lim_{\delta \downarrow 0} \lim_{\lambda \to \infty}   \sum_{k=1}^{\kmax} \sum_{l=1}^{k-1} \sum_{i=1}^{\delta^{-d}} \pi_l \nu_k^{\delta,\lambda}(W^\delta_i) = \sum_{k=1}^{\kmax} \sum_{l=1}^{k-1} \pi_l \nu_k (W)=\sum_{k=1}^{\kmax} (k-1) \nu_k(W^k) \in [0,\infty),
 \end{align*}
where in the last identity we used that by Fubini's theorem, $\pi_0\nu_k(W)=\nu_k(W^k)$ holds for all $k$. %, and in the identity before that one we used that $\underline{\Psi}$ is controlled. 
Hence, using that $\Psi$ is an admissible trajectory setting, we conclude the second part of Proposition \ref{prop-combinatorics}. 
\end{proof}

\subsection{Approximations for the penalization terms} \label{sec-SIRproof} 

The limiting relations between the penalization terms depending on the numbers of incoming hops in \eqref{Menergy} and \eqref{limitbottleneck}, and between the continuous penalization terms in \eqref{SIRenergy} and \eqref{limitSIR} are given as follows.

\begin{prop} \label{prop-SIRcomputations}
Let $\underline{\Psi}$ be a controlled standard setting. Let us write $\Psi=((\nu_k)_{k=1}^{\kmax},(\mu_m)_{m=0}^{\infty})$ for the admissible trajectory setting contained in $\underline\Psi$.  Then, almost surely,
\[ \lim_{\delta\downarrow0}\lim_{\lambda \to \infty}\sup_{s\in J^{\delta,\lambda}(\underline{\Psi})}  \Big|\frac{1}{\lambda} \mathfrak{M}(s) - \mathrm M(\Psi)\Big| =0,\numberthis\label{limitBottlemu0} \]
and
\[ \lim_{\delta\downarrow0}\lim_{\lambda \to \infty}\sup_{s\in J^{\delta,\lambda}(\underline{\Psi})}  \Big|\frac{1}{\lambda} \mathfrak{S}(s) - \mathrm S(\Psi)\Big| =0.\numberthis\label{limitSIRmu0} \]
\end{prop}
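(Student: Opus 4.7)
The plan is to handle the two statements of Proposition \ref{prop-SIRcomputations} separately, since they have very different natures. The congestion statement \eqref{limitBottlemu0} is essentially tautological from the definitions, while \eqref{limitSIRmu0} requires a genuine uniform continuity argument and a telescoping chain of approximations.

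For \eqref{limitBottlemu0}: By \eqref{Menergy}--\eqref{Pdef}, $\frac{1}{\lambda}\mathfrak M(s) = \sum_{m\ge 0}\eta(m) P_{\lambda,m}(s)(W)$. For $s \in J^{\delta,\lambda}(\underline\Psi)$, the defining constraint $P^\delta_{\lambda,m}(s) = \mu_m^{\delta,\lambda}$ implies in particular $P_{\lambda,m}(s)(W) = \mu_m^{\delta,\lambda}(W)$. Hence $\frac{1}{\lambda}\mathfrak M(s)$ is \emph{constant} over $J^{\delta,\lambda}(\underline\Psi)$ and equals $\sum_{m\ge 0}\eta(m)\mu_m^{\delta,\lambda}(W)$; the supremum is trivial. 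The inner limit $\lambda\to\infty$ is then exactly the content of the controlled-setting condition \eqref{third-controlleddef}, yielding $\sum_{m\ge 0}\eta(m)\mu_m^{\delta}(W)$. Since conditional expectation preserves total mass, $\mu_m^\delta(W) = \mu_m(W)$ for every $\delta\in\mathbb B$, so the outer limit is vacuous and equals $\mathrm M(\Psi)$.

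For \eqref{limitSIRmu0}: Using the identities $\frac{1}{\lambda}\mathfrak S(s) = \sum_k \langle R_{\lambda,k}(s), f_k(L_\lambda,\cdot)\rangle$ and $\mathrm S(\Psi) = \sum_k\langle \nu_k, \widetilde f_k\rangle$, I split the difference as
\[
\frac{1}{\lambda}\mathfrak S(s) - \mathrm S(\Psi) = \sum_{k=1}^{\kmax}\bigl\langle R_{\lambda,k}(s),\, f_k(L_\lambda,\cdot) - \widetilde f_k \bigr\rangle + \sum_{k=1}^{\kmax}\bigl\langle R_{\lambda,k}(s) - \nu_k,\, \widetilde f_k\bigr\rangle,
\]
and further decompose the second sum via $R^\delta_{\lambda,k}(s) = \nu_k^{\delta,\lambda}$ as
\[
\bigl\langle R_{\lambda,k}(s) - \nu_k, \widetilde f_k\bigr\rangle = \bigl\langle R_{\lambda,k}(s) - R^\delta_{\lambda,k}(s), \widetilde f_k\bigr\rangle + \bigl\langle \nu_k^{\delta,\lambda} - \nu_k^\delta, \widetilde f_k\bigr\rangle + \bigl\langle \nu_k^\delta - \nu_k, \widetilde f_k\bigr\rangle.
\]
The first inner piece is bounded by $R_{\lambda,k}(s)(W^k)\cdot\omega_{\widetilde f_k}(c\delta)$, where $\omega_{\widetilde f_k}$ is the modulus of continuity of $\widetilde f_k$ on the compact set $W^k$; this vanishes as $\delta\downarrow 0$, uniformly in $s$ and $\lambda$, once we observe that $R_{\lambda,k}(s)(W^k) = \nu_k^{\delta,\lambda}(W^k)\to\nu_k(W^k)$ by Definition \ref{defn-standardsetting}\eqref{nukconv-standardsetting}. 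The second piece vanishes as $\lambda\to\infty$ for any fixed $\delta$ by the same weak convergence $\nu_k^{\delta,\lambda}\Longrightarrow\nu_k^\delta$ applied to the bounded continuous $\widetilde f_k$. The third piece vanishes as $\delta\downarrow 0$ by Remark \ref{remark-standardsetting}\eqref{nukdelta-standardsetting}.

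The main obstacle, and the only truly non-trivial step, is controlling the first outer summand $\sum_k\langle R_{\lambda,k}(s), f_k(L_\lambda,\cdot)-\widetilde f_k\rangle$ uniformly in $s\in J^{\delta,\lambda}(\underline\Psi)$. The plan is as follows: for $\omega\in\Omega_1$, we have $L_\lambda(\omega)\Longrightarrow \mu$ and $L_\lambda(\omega)(W)\to\mu(W)$, so the family $\{L_\lambda(\omega):\lambda\ge\lambda_0\}\cup\{\mu\}$ is tight with bounded total mass and hence relatively compact in $\Mcal(W)$ by Prokhorov. Its closure $K$ is compact, and on the compact set $K\times W^k$ the continuous function $f_k$ is uniformly continuous. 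This yields $\|f_k(L_\lambda,\cdot)-\widetilde f_k\|_{\infty,W^k}\to 0$ as $\lambda\to\infty$, so the summand is bounded by $\nu_k^{\delta,\lambda}(W^k)\cdot\|f_k(L_\lambda,\cdot)-\widetilde f_k\|_{\infty}\to 0$ uniformly in $s$ and in $\delta$. Assembling the four contributions, $\lim_{\delta\downarrow 0}\lim_{\lambda\to\infty}$ of each one vanishes, establishing \eqref{limitSIRmu0}.
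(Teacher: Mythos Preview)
Your proof is correct and follows essentially the same strategy as the paper's: for \eqref{limitBottlemu0} both arguments observe that $\frac{1}{\lambda}\mathfrak M(s)$ is constant on $J^{\delta,\lambda}(\underline\Psi)$, equal to $\sum_m\eta(m)\mu_m^{\delta,\lambda}(W)$, and then invoke the controlled condition and the mass-preservation $\mu_m^\delta(W)=\mu_m(W)$; for \eqref{limitSIRmu0} both arguments telescope and exploit uniform continuity of $f_k$ on a compact subset of $\Mcal(W)\times W^k$.

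The only notable difference is in the telescoping order. The paper writes a single four-term chain, passing through the intermediate $f_k(L_\lambda^\delta,\cdot)$, whereas you first strip off the dependence on the random first argument via the Prokhorov-compactness bound $\|f_k(L_\lambda,\cdot)-\widetilde f_k\|_{\infty}\to 0$, and then run the measure chain $R_{\lambda,k}(s)\to R^\delta_{\lambda,k}(s)=\nu_k^{\delta,\lambda}\to\nu_k^\delta\to\nu_k$ against the fixed bounded continuous test function $\widetilde f_k$. Your route is slightly more economical (the detour through $L_\lambda^\delta$ is avoided), but the underlying analytic content---continuity of $f_k$ on $\Mcal_{\leq\alpha}(W)\times W^k$, modulus-of-continuity control of $\langle \nu-\nu^\delta,g\rangle$, and the weak convergences from Definition~\ref{defn-standardsetting} and Remark~\ref{remark-standardsetting}---is identical.
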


\begin{proof} Throughout the proof, we perform our analysis on $\Omega_1$. First, we show \eqref{limitBottlemu0}. Consider some $s\in J^{\delta,\lambda}(\underline{\Psi})$ for $\lambda>0$ and $\delta\in\mathbb B$. Additionally assume that $s^i_l \in W_{\mathbb B}$ for all $i \in I^\lambda$ and $l=0,\ldots,k$ (which is always the case for $s=S=(S^i)_{i \in I^\lambda}$ on $\Omega_1$).

Then $P_\lambda^\delta(s)=\mu^{\delta,\lambda}$ and $P_{\lambda,m}^{\delta}(s)=\mu_m^{\delta,\lambda}$ for all $m\in\N_0$, see the definition \eqref{numberofconf} of $J^{\delta,\lambda}(\underline{\Psi})$,  \eqref{standardsetting} and \eqref{mu-explanation}. Recall that $m_i(s)$ is the number of ingoing messages at relay $X_i$ for the trajectory configuration $s$. Hence we have
$$
\begin{aligned}
\mathfrak{M}(s) & = \sum_{i \in I^\lambda} \eta(m_i(s)) = \sum_{m=0}^{\infty}\eta(m) \#\{i \in I^\lambda\colon m_i(s)=m\}  =  \sum_{m=0}^{\infty}\eta(m) P_{\lambda,m}(s)(W)\\
&= \sum_{m=0}^{\infty}\eta(m) P_{\lambda,m}^\delta(s)(W) = \lambda \sum_{m=0}^{\infty} \eta(m) \mu_m^{\delta,\lambda}(W),  
\end{aligned} 
$$
for all such $s$. Note that by part \eqref{mumconv-standardsetting} of Definition~\ref{defn-standardsetting} and part \eqref{mumdelta-standardsetting} of Remark \ref{remark-standardsetting}, we obtain that $\mu_m^{\delta,\lambda}$ tends to $\mu_m$ as $\lambda \to \infty$ followed by $\delta \downarrow 0$.  Now, \eqref{third-controlleddef} in Definition \ref{defn-controlledstandardsetting}, together with the fact that the total mass of $\mu_m^\delta$ equals the one of $\mu_m$ for any $m$, implies the assertion in \eqref{limitBottlemu0}.

We continue with verifying \eqref{limitSIRmu0}. Let us fix an arbitrary controlled standard setting $\underline{\Psi}$. Our goal is to prove that \eqref{limitSIRmu0} holds for this $\underline{\Psi}$. Using  that,  for an admissible trajectory setting $\Psi=((\nu_k)_{k=1}^{\kmax},(\mu_m)_{m=0}^{\infty})$, $\mathrm S(\Psi)$ depends only on $(\nu_k)_{k=1}^{\kmax}$, we have for any $\lambda>0$, $\delta\in \mathbb B$, $s \in J^{\delta,\lambda}(\underline \Psi)$ and $k \in [\kmax]$ 
\[ \frac{1}{\lambda} \mathfrak S(s) - \mathrm S(\Psi) = \langle  R_{\lambda,k}(s) , f_k(L_\lambda,\cdot) \rangle - \langle \nu_k, f_k(\mu,\cdot) \rangle.  \]

In the rest of Section~\ref{sec-prooflargelambda}, we will often have to verify convergence of certain (sequences of) measures in the (coordinatewise) weak topology. In order to keep our arguments clear and short, for $k \in \mathbb N$, we fix a metric $d_k(\cdot,\cdot)$ on $\mathcal{M}(W^k)$ that generates the weak topology on this space. It turns out to be convenient to choose $d_k$ to be the {\em Lipschitz bounded metric} \cite[Section D.2]{DZ98} on $\mathcal{M}(W^k)$, that is,
\[ d_k(\nu_k^1, \nu_k^2) = \sup \lbrace | \langle \nu_k^1, f \rangle - \langle \nu_k^2, f \rangle | \colon f \in \mathrm{Lip}_1(W^k) \rbrace \numberthis\label{Lipschitzboundedmetric} \]
for all $k$, where $\mathrm{Lip}_1(W^k)$ is the set of Lipschitz continuous functions taking $W^k$ to $\mathbb R$ with Lipschitz parameter less than or equal to 1 and with uniform bound 1. We have for $k \in [\kmax]$
\begin{align*} 
\Big| \frac{1}{\lambda} \mathfrak S(s) - \mathrm S(\Psi) \Big| &= \Big| \langle R_{\lambda,k}(s) , f_k(L_\lambda,\cdot) \rangle - \langle \nu_k , f_k(\mu,\cdot) \rangle \Big| \\ 
 &\leq  \Big| \langle R_{\lambda,k}(s), f_k(L_\lambda,\cdot) \rangle - \langle \nu_k^{\delta,\lambda}, f_k(L_\lambda,\cdot) \rangle \Big| + \Big|  \langle \nu_k^{\delta,\lambda}, f_k(L_\lambda,\cdot) \rangle  -  \langle \nu_k^{\delta,\lambda}, f_k(L_\lambda^\delta,\cdot) \rangle  \Big| \\ 
 & \qquad +  \Big| \langle \nu_k^{\delta,\lambda}, f_k(L_\lambda^\delta,\cdot) \rangle - \langle \nu_k^{\delta,\lambda}, f_k(\mu,\cdot) \rangle \Big| + \Big| \langle \nu_k^{\delta,\lambda}, f_k(\mu,\cdot) \rangle-\langle \nu_k, f_k(\mu,\cdot) \rangle  \Big|. \numberthis\label{fourterms} 
 \end{align*}
Now, we claim that all the four terms on the right-hand side tend to 0 in the limit $\lambda \to \infty$ followed by $\delta \downarrow 0$. Indeed, for the first term, let $g \in \mathrm{Lip}^1(W^k)$. Then we have
\begin{align*} 
\Big| \langle R_{\lambda,k}(s), & g \rangle - \langle \nu_k^{\delta,\lambda}, g \rangle
 = \Big| \int_{W^k} g(y) R_{\lambda,k}(s)(\d y)-\int_{W^k} g(y) R_{\lambda,k}^\delta(s) (\d y)\Big| \\ 
 &\leq  \sum_{i_0,\ldots,i_{k-1}=1}^{\delta^{-d}} \sup_{y,z \in W^\delta_{i_0} \times\ldots\times W^\delta_{i_{k-1}}} |g(y)-g(z)| R_{\lambda,k}(s)(W^\delta_{i_0} \times \ldots \times W^\delta_{i_{k-1}}) \\
 &\leq \delta \sqrt{dk} R_{\lambda,k}(s)(W) \leq  \delta \sqrt{dk} L_\lambda (W), 
 \end{align*}
which tends to 0 as $\lambda \to \infty$ followed by $\delta \downarrow 0$. It follows that $R_{\lambda,k}(s)-\nu_k^{\delta,\lambda}$ tends weakly to 0 as $\lambda \to \infty$ followed by $\delta \downarrow 0$. We note that for any $\alpha>0$, the restriction of $f_k$ to $\Mcal_{\leq \alpha}(W) \times W^k$ is bounded, where we wrote $\Mcal_{\leq \alpha}(V)$ for the set of measures on the space $V$ with total mass $\leq \alpha$. Indeed, since $W$ is compact, $\Mcal_{\leq \alpha}(W)$ with the weak topology is also a compact, metrizable space by Prohorov's theorem. Thus, the continuous function $f_k \colon \Mcal_{\leq \alpha}(W) \times W^k \to \R$ is uniformly continuous, and therefore it is bounded. Now, since eventually $L_\lambda \in  \Mcal_{\leq 2 \mu(W)}(W)$, the first term on the right-hand side of \eqref{fourterms} tends to 0.

As for the second term, note that for any $\delta\in \mathbb B$, $L_\lambda-L_\lambda^\delta$ tends to $\mu-\mu^\delta$ as $\lambda \to \infty$, which tends to 0 as $\delta \downarrow 0$. Thus, by the fact that $f_k$ is continuous and bounded on $\Mcal_{\leq 2 \mu(W)}(W)  \times W^k$ and eventually $L_\lambda,L_\lambda^\delta,\nu_k^{\delta,\lambda} \in \Mcal_{\leq 2 \mu(W)}(W)$ for all $\delta \in \mathbb B$, the second term also tends to 0 as first $\lambda\to\infty$ and afterwards $\delta\downarrow 0$. An analogous argument applies for the third term, using that $L_\lambda^\delta$ converges to $\mu$ as first $\lambda \to \infty$ and then $\delta \downarrow 0$ by part \eqref{muconv-standardsetting} of Remark \ref{remark-standardsetting} and the definition of $\mu^\delta$, $\delta \in \mathbb B$. The fourth term tends to zero since it easily follows from part \eqref{nukconv-standardsetting} of Definition~\ref{defn-standardsetting} and part \eqref{nukdelta-standardsetting} of Remark~\ref{remark-standardsetting} that $\nu_k^{\delta,\lambda}$ converges weakly to $\nu_k$, and $f_k$ is 
continuous as bounded on $\Mcal_{\leq 2 \mu(W)}(W)  \times W^k$. We conclude \eqref{limitSIRmu0} and Proposition \ref{prop-SIRcomputations}.
\end{proof}

\subsection{Existence of standard settings}\label{sec-admtrajetstandardsetting}
Recall that we equip $\mathcal{A}$ defined in \eqref{Adef} with the product topology of the weak topologies of the factors $\Mcal(W^k)$, $\mathcal M(W)$, and that this is the topology of coordinatewise weak convergence. For $k \in \mathbb N$, let $d_k(\cdot,\cdot)$ be the Lipschitz bounded metric  \eqref{Lipschitzboundedmetric} on $\mathcal{M}(W^k)$, which generates the weak topology on this space. Then, 
\[ d_0(\Psi^1,\Psi^2) = \sum_{k=1}^{\kmax} d_k(\nu_k^1,\nu_k^2) + \sum_{m=0}^{\infty} 2^{-m} d_1(\mu_m^1,\mu_m^2), \quad \Psi^1, \Psi^2 \in \mathcal A, \numberthis\label{dmetric}\]
is a metric on $\mathcal{A}$ that generates the product topology. For $\varrho>0$ and $\Psi \in \Acal$, let us write $B_\varrho(\Psi)=\lbrace \Psi' \in \mathcal{A} \colon d_0(\Psi',\Psi)<\varrho \rbrace $ for the open $\varrho$-ball around $\Psi$. We have the following.

\begin{prop}\label{prop-standardsettingconstruction}
On $\Omega_1$, for any admissible trajectory setting (see Definition~\ref{def-admtrajet}), $\Psi=((\nu_k)_k,(\mu_m)_m)$, there exists a standard setting $\underline{\Psi}$ containing it. If $\sum_m \eta(m) \mu_m(W)<\infty$, then $\underline{\Psi}$ can be chosen to be a controlled standard setting.
\end{prop}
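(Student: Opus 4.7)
The $\delta$-level objects are forced: set $\nu_k^\delta := \nu_k(\cdot\mid\Fcal_\delta^{\otimes k})$, $\mu_m^\delta := \mu_m(\cdot\mid\Fcal_\delta)$, and $\mu^{\delta,\lambda} := L_\lambda^\delta$. Nested consistency in $\delta$ follows from the tower property of conditional expectation; the weak convergences $\nu_k^\delta\Longrightarrow\nu_k$ and $\mu_m^\delta\Longrightarrow\mu_m$ as $\delta\downarrow 0$ are a martingale-convergence argument, identical to the one invoked for $\mu^\delta\Longrightarrow\mu$ in Remark~\ref{remark-standardsetting}; and $\mu^{\delta,\lambda}\Longrightarrow\mu^\delta$ as $\lambda\to\infty$ holds on $\Omega_1$ by the defining property of that set. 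The real task is to construct, for each $\delta\in\mathbb B$ and $\lambda>0$, integer-valued rescaled measures $\nu_k^{\delta,\lambda}$ on products of $\delta$-subcubes and $\mu_m^{\delta,\lambda}$ on $\delta$-subcubes that satisfy the three discrete constraints of Definition~\ref{defn-standardsetting}, are nested-consistent in $\delta$, and weakly approach $\nu_k^\delta$, $\mu_m^\delta$ as $\lambda\to\infty$.

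\textbf{Refinement construction of $\nu_k^{\delta,\lambda}$.} Enumerate $\mathbb B$ as $\delta_n=3^{-n}$ and proceed inductively in $n$, for fixed $\omega\in\Omega_1$ and $\lambda>0$. At $n=0$ there is a single subcube and the setup is trivial. In the step $n\to n+1$, each integer count $\lambda\nu_k^{\delta_n,\lambda}(W^{\delta_n}_{i_0}\times\cdots\times W^{\delta_n}_{i_{k-1}})$ must be distributed among the $3^{dk}$ sub-products obtained by refining each factor into $3^d$ sub-cubes. Assign real-valued target weights proportional to the mass of $\nu_k^{\delta_{n+1}}$ on each sub-product, rescaled to sum to the parent count, and apply the largest-remainder (Hamilton) rule to round to nonnegative integers that sum exactly to the parent count. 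Doing this jointly across all $(k;i_1,\ldots,i_{k-1})$ inside each $\delta_n$-cube $W^{\delta_n}_{i_0}$ preserves integrality, the outer marginal $\sum_k\pi_0\nu_k^{\delta,\lambda}=L_\lambda^\delta$, and nested consistency by construction. Because each rounding error is at most one user per cell, the finitely many cube-product masses converge to those of $\nu_k^\delta$ as $\lambda\to\infty$, giving $\nu_k^{\delta,\lambda}\Longrightarrow\nu_k^\delta$. For $\lambda$ so small that some $\delta$-cube with positive $\mu$-mass is still empty of users, I simply define $\nu_k^{\delta,\lambda}$ ad hoc (for example, concentrating all trajectories on length $k=1$), since these choices are irrelevant for the $\lambda\to\infty$ limit.

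\textbf{Coupled construction of $\mu_m^{\delta,\lambda}$.} With $(\nu_k^{\delta,\lambda})_k$ fixed, on each $\delta$-subcube $W^\delta_{i_0}$ set
\[
T := \lambda L_\lambda^\delta(W^\delta_{i_0}),\qquad R := \lambda\sum_{k=1}^{\kmax}\sum_{l=1}^{k-1}\pi_l\nu_k^{\delta,\lambda}(W^\delta_{i_0});
\]
both are nonnegative integers, and (by the large-$\lambda$ bookkeeping of the previous paragraph) $R=0$ whenever $T=0$, so the elementary partition problem of finding a nonnegative integer vector $(a_m)_{m\in\N_0}$ with $\sum_m a_m=T$, $\sum_m m\,a_m=R$, and profile $a_m/\lambda$ close to $\mu_m^\delta(W^\delta_{i_0})$ has a solution. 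It is obtained by truncating the target at $m\le M_\lambda\to\infty$, largest-remainder rounding of the $M_\lambda+1$ entries, and adjusting two coordinates to hit both linear constraints exactly (which is feasible because the limiting profile already fulfills them). Setting $\lambda\mu_m^{\delta,\lambda}(W^\delta_{i_0}):=a_m$ yields $\sum_m\mu_m^{\delta,\lambda}=\mu^{\delta,\lambda}$ and $\sum_m m\,\mu_m^{\delta,\lambda}=\sum_{k,l<k}\pi_l\nu_k^{\delta,\lambda}$, and the same refinement scheme applied inside each $\delta_n$-subcube at level $n+1$ provides nested consistency in $\delta$. Weak convergence $\mu_m^{\delta,\lambda}\Longrightarrow\mu_m^\delta$ follows from the $O(1/\lambda)$ rounding error. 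Under $\sum_m\eta(m)\mu_m(W)<\infty$, the controlled property holds by letting $M_\lambda$ grow so slowly that $\sum_{m>M_\lambda}\eta(m)\mu_m^\delta(W)\to 0$ (this tail equals $\sum_{m>M_\lambda}\eta(m)\mu_m(W)$ because conditional expectation preserves total mass on $W$), combined with dominated convergence below the truncation.

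\textbf{Main obstacle.} The genuinely delicate ingredient is the integer linking identity $\sum_m m\,\mu_m^{\delta,\lambda} = \sum_{k,l<k}\pi_l\nu_k^{\delta,\lambda}$, which couples the two discretized families at every $\delta\in\mathbb B$: independent rounding of $\nu_k$ and $\mu_m$ at each level would leave an uncontrolled discrepancy in (iii) and block nested consistency. Building $\mu_m^{\delta,\lambda}$ as a function of $\nu_k^{\delta,\lambda}$ at each refinement level, as above, sidesteps the issue. Everything else then reduces to standard bookkeeping of rounding errors together with continuity on the Lipschitz-bounded metric of $\Mcal(W^k)$ introduced around \eqref{Lipschitzboundedmetric}.
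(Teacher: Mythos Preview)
Your approach is genuinely different from the paper's. The paper defines $\nu_k'^{\delta,\lambda}$ and $\mu_m'^{\delta,\lambda}$ directly at each scale by rescaling the targets $\nu_k^\delta,\mu_m^\delta$ with the ratio $L_\lambda^\delta(W^\delta_{i_0})/\mu^\delta(W^\delta_{i_0})$ on the zeroth coordinate, then asserts that unspecified ``suitable up- and downroundings'' of these numbers exist which enforce all the integer constraints and consistencies; the written part of its argument is devoted only to the weak convergences $\Psi'^{\delta,\lambda}\to\Psi^\delta\to\Psi$ in the Lipschitz-bounded metric. Your top-down refinement by largest-remainder rounding is more explicit: it makes nested consistency for the $\nu_k^{\delta,\lambda}$ automatic and correctly isolates the linking constraint~(iii) as the place where the two families interact---indeed, the paper's unrounded $\mu_m'^{\delta,\lambda}$ does \emph{not} satisfy~(iii) before rounding, since the $l$-th marginal of $\nu_k'^{\delta,\lambda}$ carries the scaling factor of the zeroth coordinate, so the paper is also relying on its roundings to repair this without saying how.

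There is, however, one gap in your scheme. Your claim that ``the same refinement scheme applied inside each $\delta_n$-subcube at level $n+1$ provides nested consistency'' for $\mu_m^{\delta,\lambda}$ is not justified: having already fixed the profile $(a_m)_m$ in a $\delta_n$-cube $i_0$, the three requirements at level $n{+}1$---nested consistency $\sum_{j_0\subset i_0}a_m^{(j_0)}=a_m$, the count constraint $\sum_m a_m^{(j_0)}=T_{j_0}$, and the hop constraint $\sum_m m\,a_m^{(j_0)}=R_{j_0}$---can be jointly infeasible in integers. For instance, take $T_{i_0}=R_{i_0}=2$ with $a_0=a_2=1$; if both children have $T_{j_0}=R_{j_0}=1$, each forces $a_1^{(j_0)}=1$, contradicting $a_1=0$. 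The fix is easy and in the spirit of your argument: either delay committing to $\mu_m^{\delta_n,\lambda}$ until the $\nu_k$-refinement has been pushed one level deeper so that the $R_{j_0}$ are known and choose the parent profile among those that \emph{are} refinable; or simply note that for all $\lambda$ large enough (depending on $\delta$) every cube count is of order $\lambda$ and the $O(1)$ integer adjustments needed to reconcile the three constraints are always available, handling the finitely many small $\lambda$ ad hoc as you already do for the $\nu_k$-family.
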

\begin{proof}
We fix an admissible trajectory setting $\Psi$ and construct $\underline \Psi$ as follows. As is required in Definition \ref{defn-standardsetting}, the measures $\nu_k^\delta$ for $k \in [\kmax]$ and $\mu_m^\delta$ for $m \in \mathbb N_0$ are the $\delta$-coarsenings of the measures $\nu_k$ and $\mu_m$, respectively, and $\mu^{\delta,\lambda}=L_\lambda^{\delta}$. Now for $\delta \in \mathbb B$ and $\lambda>0$, pick some measures $\nu_k^{\delta,\lambda}$ and $\mu_m^{\delta,\lambda}$ with values in $\frac 1\lambda\N_0$ such that the requirements \eqref{i-standardsetting} $\sum_{k=1}^{\kmax} \pi_0 \nu_k^{\delta,\lambda}=\mu^{\delta,\lambda}$, \eqref{ii-standardsetting} $\sum_{m=0}^{\infty} \mu_m^{\delta,\lambda}=\mu^{\delta,\lambda}$ and \eqref{iii-standardsetting} $\sum_{m=0}^{\infty} m \mu_m^{\delta,\lambda} = \sum_{k=1}^{\kmax} \sum_{l=1}^{k-1} \pi_l \nu_k^{\delta,\lambda}$ of Definition~\ref{defn-standardsetting} are met, such that $\nu_k^{\delta,\lambda}\Longrightarrow \nu_k^{\delta}$ 
and $\mu_m^{\delta,\lambda}\Longrightarrow \mu_m^{\delta}$ as $\lambda\to\infty$ and such that the collection $\underline \Psi$ of all these measures is a standard setting containing $\Psi$, which is controlled if $\sum_m \eta(m) \mu_m(W)<\infty$. 

We claim that this can be done by taking suitable up- and downroundings of the numbers
\[ \nu_k'^{\delta,\lambda}(W^\delta_{i_0}\times\ldots\times W^\delta_{i_{k-1}}) =\nu_k^\delta(W^\delta_{i_0}\times\ldots\times W^\delta_{i_{k-1}}) \frac{L_{\lambda}^{\delta}(W^\delta_{i_0})}{\mu^{\delta}(W^\delta_{i_0})} \mathds 1 \lbrace \mu^{\delta}(W^\delta_{i_0}) >0 \rbrace, \quad k \in [\kmax],\numberthis\label{constructionagain} \] 
for all $i_0,\ldots,i_{k-1}=1,\ldots,\delta^{-d}$, and dividing by $\lambda$, analogously for the $\mu_m$'s. Now, using the $d$-metric defined in \eqref{dmetric}, we prove that the convergences required in Definition~\ref{defn-standardsetting} hold for such $\underline \Psi$. 

First, we prove the convergence of the $\delta$-coarsenings $\Psi^\delta=((\nu_k^\delta)_k,(\mu_m^\delta)_m)$ to $\Psi$ in the $d_0$-metric. We claim that for any $\varrho>0$, there exists $\delta_0 \in \mathbb B$ such that $\Psi^\delta\in B_\varrho(\Psi)$ for all $\mathbb B \ni \delta \leq \delta_0$. Indeed, for $k \in [\kmax]$, $\nu_k \in \mathcal{M}(W^k)$ and $\delta \in \mathbb B$ we see that the distance between $\nu_k$ and its $\delta$-coarsening is of order $\delta$ with respect to the Lipschitz bounded metric:
\[
\begin{aligned} 
d_k(\nu_k,\nu_k^\delta)
= &  \sup_{f \in \mathrm{Lip}_1(W^k)} \Big|  \sum_{i_0,\ldots,i_{k-1}=1}^{\delta^{-d}} \Big( \int_{W^\delta_{i_0} \times \ldots \times W^\delta_{i_{k-1}}} f(x) \nu_k(\d x)-\int_{W^\delta_{i_0} \times \ldots \times W^\delta_{i_{k-1}}} f(x) \nu_k^{\delta}(\d x) \Big) \Big| 
\\ \leq & \sup_{f \in \mathrm{Lip}_1(W^k)} \sum_{i_0,\ldots,i_{k-1}=1}^{\delta^{-d}}  \sup_{x,y\in W^\delta_{i_0} \times \ldots \times W^\delta_{i_{k-1}}} |f(x)-f(y)| \nu_k(W^\delta_{i_0} \times \ldots \times W^\delta_{i_{k-1}}) \leq \delta\nu_k(W^k) \sqrt{kd},
%&= \sup_{f \in \mathrm{Lip}_1(W^k)} \sum_{j_0,\ldots,j_{k-1}=1}^{\delta^{-d}} \int_{W^\delta_{j_0} \times \ldots \times W^\delta_{j_{k-1}}} | f(x)-f(C(W^\delta_{j_0} \times \ldots \times W^\delta_{j_{k-1}}))|\, \nu_k(\d x)  \\ 
%&\leq \sum_{j_0,\ldots,j_{k-1}=1}^{\delta^{-d}}  \int_{W^\delta_{j_0} \times \ldots \times W^\delta_{j_{k-1}}} |x-C(W^\delta_{j_0} \times \ldots \times W^\delta_{j_{k-1}}) |\, \nu_k(\d x)  \leq \nu_k(W^k) \frac{\sqrt{dk} \delta}{2},
\end{aligned}
\]
where we wrote $x=(x_0,\ldots,x_{k-1})$; and analogously for $\mu_m$. Thus, we have
\[ d_0(\Psi,\Psi^\delta) \leq \delta \sqrt{d}\Big[ \sum_{k=1}^{\kmax} \nu_k(W^k) \sqrt{k} + \sum_{m=0}^{\infty} \mu_m(W) 2^{-m} \Big]. \]
Since $\sum_{m=0}^{\infty} \mu_m(W)<\infty$ by (ii) in \eqref{constraints}, there exists a constant $C$, only depending on $\Psi$, such that $\Psi^\delta \in B_\varrho(\Psi)$ for any $\delta\leq C \varrho$.

Second, we ignore the up- or downroundings in the construction of $\Psi$ and prove the following. For $\delta \in \mathbb B$ and $\lambda>0$, let $\Psi'^{\delta,\lambda}$ be the collection of the measures introduced in \eqref{constructionagain}. We claim that on $\Omega_1$, we have
\[ \limsup_{\lambda \to \infty} d_0(\Psi^\delta,\Psi'^{\delta,\lambda})=0. \]
Indeed, for any $k \in [\kmax]$ and $i_0,\ldots,i_{k-1}=1,\ldots,\delta^{-d}$, $d_k(\nu_k^{\delta},\nu_k'^{\delta,\lambda}) $ is bounded from above by
\[ \sup_{f \in \mathrm{Lip}_1(W^k)}  \sum_{i_0,\ldots,i_{k-1}=1}^{\delta^{-d}} \nu_k^\delta(W^\delta_{i_0}\times\ldots\times W^\delta_{i_{k-1}}) \Big| \frac{L_{\lambda}^{\delta}(W^\delta_{i_0})}{\mu^{\delta}(W^\delta_{i_0})} -1 \Big| \Vert f \Vert_{\infty} \leq \nu_k^{\delta}(W^k) \max_{i_0=1}^{\delta^{-d}} \Big| \frac{L_{\lambda}^{\delta}(W^\delta_{i_0})}{\mu^{\delta}(W^\delta_{i_0})} -1 \Big|.  \numberthis\label{standardagain} \]
Similarly, for any $\delta \in \mathbb B$ and $m \in \N_0$, $d_1(\mu_m^\delta,\mu_m'^{\delta,\lambda})$ vanishes in the limit $\lambda \to \infty$. Thus,
\[ d_0(\Psi^\delta,\Psi'^{\delta,\lambda}) \leq \Big( \sum_{k=1}^{\kmax}   \nu_k^{\delta}(W^k) + \sum_{m=0}^{\infty} 2^{-m} \mu_m^{\delta}(W) \Big) \max_{i_0=1}^{\delta^{-d}} \Big| \frac{L_{\lambda}^{\delta}(W^\delta_{i_0})}{\mu^{\delta}(W^\delta_{i_0})} -1 \Big|, \]
which tends to 0 on $\Omega_1$ as $\lambda \to \infty$, according to \eqref{Omega1def}.

Now, if we add the suitable up- and downroundings, we only change distances in the $d$-metric by an error term of order $1/\lambda$, which vanishes as $\lambda \to \infty$. This implies that $\underline \Psi$ is a standard setting. It also follows easily that if $\sum_m \eta(m) \mu_m(W)<\infty$, then $\underline \Psi$ is controlled.
\end{proof}

\subsection{Proof of Theorem \ref{theorem-variation1}}\label{sec-varproof}
Abbreviate  
\[ 
\mathfrak{Y}(r)=\Big(\prod_{i\in I^\lambda}N(\lambda)^{- (r^i_{-1}-1)}\Big) \exp\Big\{-\gamma \mathfrak S(r)-\beta\mathfrak M(r)\Big\},\qquad \lambda>0, r \in \mathcal{S}_{\kmax}(X^\lambda),
\]and note that the partition function is given as 
\begin{equation}\label{Zrepres}
Z^{\gamma,\beta}_\lambda(X^\lambda) =\sum_{r \in \mathcal{S}_{\kmax}(X^\lambda)}\mathfrak Y(r).
\end{equation}
Then Theorem \ref{theorem-variation1} says that its large-$\lambda$ negative exponential rate is given as the infimum of $\mathrm I(\Psi)+\gamma \mathrm S(\Psi)+\beta \mathrm M(\Psi)$, taken over all admissible trajectory settings $\Psi$. Throughout the proof, we assume that the configuration $X^\lambda=X^\lambda(\omega)$ comes from some $\omega\in\Omega_1$ defined in \eqref{Omega1def}.

Having proved Propositions \ref{prop-combinatorics}, \ref{prop-SIRcomputations} and \ref{prop-standardsettingconstruction}, our strategy to prove Theorem \ref{theorem-variation1} is the following. First, Proposition \ref{prop-standardsettingconstruction} gives a standard way of constructing from an admissible trajectory setting $\Psi$ satisfying $\mathrm I(\Psi)+\gamma \mathrm S(\Psi)+\beta \mathrm M(\Psi)<\infty$ a controlled standard setting $\underline\Psi$ that contains $\Psi$. Then the lower bound for the partition function is easily given in terms of the objects that are contained in any such $\underline \Psi$ and using the logarithmic asymptotics for their combinatorics from Propositions \ref{prop-combinatorics} and \ref{prop-SIRcomputations} and finally taking the infimum over all such $\Psi$, respectively $\underline \Psi$. The upper bound needs more care, since the entire sum over $r$ has to be handled. First of all, we show that the sum can be restricted for all $\lambda>0$, modulo some error term 
that is negligible on the exponential scale, to the sum of those configurations whose 
congestion exponent is at most $C\lambda$ for some appropriate large constant $C>0$. This sum can be decomposed, for any $\delta\in \mathbb B$, to sums on configurations coming from a particular choice of empirical measures on the $\delta$-partitions of $W$. The number of these empirical measures and the sum on the partitions is negligible in the limit $\lambda\to\infty$, and the asymptotics of the sums on $r$ in these partitions can be evaluated with the help of our spatial discretization procedure, using arguments of the proofs of Propositions \ref{prop-combinatorics} and \ref{prop-SIRcomputations} in the limit $\lambda\to\infty$, followed by $\delta\downarrow 0$. Using these, we arrive at the formula \eqref{variation}.

Let us give the details. We start with the proof of the lower bound. For any admissible trajectory setting $\Psi$ such that $\mathrm I(\Psi)+\gamma \mathrm S(\Psi)+\beta \mathrm M(\Psi)<\infty$, we pick a controlled standard setting $\underline \Psi$ as in Proposition \ref{prop-standardsettingconstruction} and recall the configuration class $J^{\delta,\lambda}(\underline{\Psi})$ from \eqref{numberofconf}. Then, for any $\lambda>0$ and $\delta\in\mathbb B$,
\begin{equation}\label{afterconstruction}
Z^{\gamma,\beta}_\lambda(X^\lambda) \geq \sum_{r \in J^{\delta,\lambda}(\underline{\Psi})}  \mathfrak{Y}(r) 
\geq \frac{\# J^{\delta,\lambda}(\underline{\Psi})}{\sup_{r\in J^{\delta,\lambda}(\underline{\Psi})}\prod_{i\in I^\lambda}N(\lambda)^{- (r^i_{-1}-1)}}
\exp\Big\{- \sup_{r\in J^{\delta,\lambda}(\underline{\Psi})}\big(\gamma\mathfrak S(r)+\beta\mathfrak M(r)\big)\Big\}.
\end{equation}
Hence,
\begin{equation} \label{lowerbound-theorem1}
\begin{aligned}
\liminf_{\lambda\to\infty}\frac 1\lambda\log Z^{\gamma,\beta}_\lambda(X^\lambda) 
&\geq 
\liminf_{\delta\downarrow0}\liminf_{\lambda\to\infty}\frac 1\lambda\log \frac{\# J^{\delta,\lambda}(\underline{\Psi})}{\sup_{r\in J^{\delta,\lambda}(\underline{\Psi})}\prod_{i\in I^\lambda}N(\lambda)^{- (r^i_{-1}-1)}}\\
&\quad -\gamma \limsup_{\delta\downarrow0}\limsup_{\lambda\to\infty}\sup_{r\in J^{\delta,\lambda}(\underline{\Psi})}\frac 1\lambda \mathfrak S(r)-\beta \limsup_{\delta\downarrow0}\limsup_{\lambda\to\infty}\sup_{r\in J^{\delta,\lambda}(\underline{\Psi})}\frac 1\lambda \mathfrak M(r)\\
&=-\mathrm I(\Psi)-\gamma \mathrm S(\Psi)-\beta \mathrm M(\Psi).
\end{aligned}
\end{equation}
In the last step we also used  Propositions \ref{prop-combinatorics} and \ref{prop-SIRcomputations} together with the fact that $\underline{\Psi}$ is controlled. Now take the supremum over all such $\Psi$ on the r.h.s.~of \eqref{lowerbound-theorem1} to conclude that the lower bound in \eqref{variation} holds.

The upper bound of Theorem \ref{theorem-variation1} requires some additional work. We start from \eqref{Zrepres}. For $C>0$ we have 
\begin{equation}\label{Zdecompose}
Z^{\gamma,\beta}_\lambda(X^\lambda) = \sum_{r \in \mathcal{S}_{\kmax}(X^\lambda) \colon\mathfrak{M}(r) \leq \lambda C} \mathfrak Y(r) + \sum_{r \in \mathcal{S}_{\kmax}(X^\lambda) \colon\mathfrak{M}(r) > \lambda C} \mathfrak Y(r).
\end{equation}
Since the total mass of our {\it a priori} measure has a bounded large-$\lambda$ exponential rate (see Section \ref{modeldef-messagetraj}) and $\mathfrak S$, $\mathfrak M$ are bounded from below, we see that
\[ \limsup_{C \to \infty}\limsup_{\lambda \to \infty} \frac{1}{\lambda} \log \sum_{r \in \mathcal{S}_{\kmax}(X^\lambda) \colon\mathfrak{M}(r) > \lambda C} \mathfrak Y(r) =-\infty.  \]
Thus, for $C$ sufficiently large, the exponential rate of $Z^{\gamma,\beta}_\lambda(X^\lambda)$ is equal to the one of the first term on the right-hand side of \eqref{Zdecompose}. We additionally require $C$ so large that
\[ \inf_{\Psi\text{ adm. traj. setting},~\mathrm M(\Psi) \leq C} (\mathrm I(\Psi)+\gamma \mathrm S(\Psi)+\beta \mathrm M(\Psi)) = \inf_{\Psi\text{ adm. traj. setting}} (\mathrm I(\Psi)+\gamma \mathrm S(\Psi)+\beta \mathrm M(\Psi)). \numberthis\label{largeCinf} \]
Let us write $\mathcal{S}_{\kmax,C}(X^\lambda)=\lbrace r \in \mathcal{S}_{\kmax}(X^\lambda) \colon\mathfrak{M}(r) \leq \lambda C \rbrace$ and $Z^{\gamma,\beta,C}_\lambda(X^\lambda)=\sum_{r \in \mathcal{S}_{\kmax,C}(X^\lambda) } \mathfrak Y(r)$. The upper bound of Theorem \ref{theorem-variation1} follows as soon as we show that
\[  \limsup_{\lambda \to \infty} \frac{1}{\lambda} \log Z^{\gamma,\beta,C}_\lambda(X^\lambda)\leq -\inf_{\Psi\text{ admissible trajectory setting},~\mathrm M(\Psi) \leq C} (\mathrm I(\Psi)+\gamma \mathrm S(\Psi)+\beta \mathrm M(\Psi)). \numberthis\label{esistgenug} \]

For fixed $\lambda>0$ and $\delta \in \mathbb B$, let us say that a collection of measures $\Psi^{\delta,\lambda}=((\nu_k^{\delta,\lambda})_{k=1}^{\kmax},(\mu_m^{\delta,\lambda})_{m=0}^{\infty})$ lies in $G(\delta,\lambda)=G(\delta,\lambda)(X^\lambda)$ if all these measures take values in $\frac 1\lambda\N_0$ only and satisfy the constraints $\sum_{k=1}^{\kmax} \pi_0 \nu_k^{\delta,\lambda}=L_\lambda^\delta$, $\sum_{m=0}^{\infty} \mu_m^{\delta,\lambda}=L_\lambda^\delta$ and $\sum_{k=1}^{\kmax}\sum_{l=1}^{k-1}\pi_l \nu_k^{\delta,\lambda}=\sum_{m=0}^\infty m\mu_m^{\delta,\lambda}$. We will write $ J^{\delta,\lambda}(\Psi^{\delta,\lambda})$ for the set $J^{\delta,\lambda}(\underline\Psi)$ defined in \eqref{numberofconf}. Then the union of $J^{\delta,\lambda}(\Psi^{\delta,\lambda})$ over all $\Psi^{\delta,\lambda}$ with $\sum_{m=0}^{\infty} \eta(m) \mu_m^{\delta,\lambda}(W) \leq C$ is equal to
\[ \big\{ (R_{\lambda,k}^\delta(r))_{k\in[\kmax]}, (P^\delta_{\lambda,m}(r))_{m\in\N_0}) \colon r \in \mathcal{S}_{\kmax,C}(X^\lambda) \big\} ,\]  
since these three equations characterize the tuple of the measures $(R_{\lambda,k}^\delta(S))_{k=1}^{\kmax}$ and $(P_{\lambda,m}^{\delta}(S))_{m=0}^{\infty}$ if $(S^i)_{i \in I^\lambda} \in \mathcal{S}_{\kmax,C}(X^\lambda)$. 

Using this, we can estimate, for any $\delta\in\mathbb B$,
\begin{equation}
{Z}^{\gamma,\beta,C}_\lambda(X^\lambda) =\sum_{\Psi^{\delta,\lambda}\in G(\delta,\lambda) \colon \mathrm M(\Psi^{\delta,\lambda}) \leq C}~ \sum_{ r \in J^{\delta,\lambda}(\Psi^{\delta,\lambda})} \mathfrak{Y}(r)
\leq \#G(\delta,\lambda)\sup_{\Psi^{\delta,\lambda}\in G(\delta,\lambda) \colon \mathrm M(\Psi^{\delta,\lambda}) \leq C}\sum_{ r \in J^{\delta,\lambda}(\Psi^{\delta,\lambda})}\mathfrak Y(r).
\end{equation} 
Hence, 
\begin{equation}
\begin{aligned} \label{supinside}
\limsup_{\lambda\to\infty}&\frac 1\lambda\log Z^{\gamma,\beta,C}_\lambda(X^\lambda)\\
&\leq \limsup_{\delta\downarrow0}\limsup_{\lambda\to\infty}\frac 1\lambda\log \#G(\delta,\lambda)\\
&\quad+\limsup_{\delta\downarrow0}\limsup_{\lambda\to\infty}\frac 1\lambda\log 
\sup_{\Psi^{\delta,\lambda} \in G(\delta,\lambda) \colon \mathrm M(\Psi^{\delta,\lambda}) \leq C }\Big[\frac{\# J^{\delta,\lambda}(\Psi^{\delta,\lambda})}{\inf_{r\in J^{\delta,\lambda}(\Psi^{\delta,\lambda})}\prod_{i\in I^\lambda}N(\lambda)^{- (r^i_{-1}-1)}}\\
&\quad -\gamma \liminf_{\delta\downarrow0}\liminf_{\lambda\to\infty}\inf_{r\in J^{\delta,\lambda}(\Psi^{\delta,\lambda})}\frac 1\lambda \mathfrak S(r)-\beta \liminf_{\delta\downarrow0}\liminf_{\lambda\to\infty}\inf_{r\in J^{\delta,\lambda}(\Psi^{\delta,\lambda})}\frac 1\lambda \mathfrak M(r)\Big].
\end{aligned}
\end{equation}
According to Lemma \ref{lemma-Gstillnegligible} below, the first term on the right-hand side is equal to zero. Now pick a sequence $(\delta_n)_n$ and for each $n$ a sequence $(\lambda_{n,j})_j$ along which the superior limits as $n\to\infty$, respectively $j\to\infty$, are realized. Now pick, for any $n$ and $j$, a maximizer $\widetilde\Psi^{\delta_n,\lambda_{n,j}}$. Pick $\lambda_0$ so large that $N(\lambda)\leq 2\mu(W) \lambda$ for all $\lambda\geq\lambda_0$. Hence,
\[  \bigcup_{\lambda>\lambda_0,\delta \in \mathbb B} G(\delta,\lambda) \subseteq  \Big( \prod_{k=1}^{\kmax} \mathcal{M}_{\leq 2\mu(W)}(W^k) \Big) \times \mathcal{M}_{\leq 2\mu(W)}(W)^{\mathbb N_0}, \numberthis\label{leq} \] 
where we recall that $\Mcal_{\leq \alpha}(V)$ is the set of measures on a space $V$ with total mass $\leq \alpha$.  Note that $\mathcal{M}_{\leq 2\mu(W)}(W^k)$ is compact in the weak topology of $\mathcal{M}(W^k)$ for any $k$, according to Prohorov's theorem. 

Without loss of generality (using two diagonal sequence arguments), we can assume that for all $n \in \mathbb N$, $\widetilde\Psi^{\delta_n,\lambda_{n,j}}$ converges coordinatewise weakly to a collection of measures $\widetilde\Psi^{\delta_n}=((\widetilde \nu_k^{\delta_n})_{k=1}^{\kmax}, (\widetilde \mu_m^{\delta_n})_{m=0}^{\infty})$ as $j \to \infty$, and $\widetilde\Psi^{\delta_n}$ converges coordinatewise weakly to a collection of measures $\widetilde \Psi$ as $n \to \infty$. Then, it is clear that $\widetilde \Psi$ satisfies (i) from \eqref{constraints}, and also that 
\[ \lim_{n \to \infty} \lim_{j \to \infty} \sum_{k=1}^{\kmax} \sum_{l=1}^{k-1} \pi_l \widetilde \nu_k^{\delta_n,\lambda_{n,j}} =  \sum_{k=1}^{\kmax} \sum_{l=1}^{k-1} \pi_l \widetilde \nu_k. \] 
In order to see that (iii) holds for $\widetilde\Psi$, it remains to show that $ \lim_{n \to \infty} \lim_{j \to \infty} \sum_{m=0}^{\infty} m \widetilde \mu_m^{\delta_n,\lambda_{n,j}} = \sum_{m=0}^{\infty} m \widetilde \mu_m$. For $N \in \mathbb N$ 
and for any continuous function $f \colon W \to \mathbb R$, we estimate
\[ \left| \left\langle \sum_{m=0}^{\infty} m(\widetilde \mu_m^{\delta_n,\lambda_{n,j}}-\widetilde \mu_m), f \right\rangle \right| \leq  \sum_{m=0}^{N} m \left|\langle \widetilde \mu_m^{\delta_n,\lambda_{n,j}}-\widetilde \mu_m, f \rangle \right| + \sum_{m=N+1}^{\infty} \Vert f \Vert_\infty m \left| \widetilde \mu_m^{\delta_n,\lambda_{n,j}}(W)-\widetilde \mu_m(W) \right|. \]
The first term on the r.h.s. clearly tends to 0 as $j \to \infty$, followed by $n\to\infty$, for any fixed $N$. The second term can further be estimated from above as follows
\[ \Vert f \Vert_\infty \sum_{m>N} \eta(m) \Big(\sup_{\widetilde m>N} \frac{\widetilde m}{\widetilde \eta(m)}\Big) (\widetilde \mu_m^{\delta_n,\lambda_{n,j}}(W) + \widetilde \mu_m(W)) \leq 2 \Vert f \Vert_\infty   \Big(\sup_{\widetilde m>N} \frac{\widetilde m}{\widetilde \eta(m)}\Big) C. \]
By the assumption that $(\eta(N)/N) \to\infty$ as $N\to\infty$, the right-hand side tends to 0. One can analogously show that $\sum_{m=0}^{\infty} \widetilde \mu_m^{\delta_n,\lambda_{n,j}}$ tends to $\sum_{m=0}^{\infty} \widetilde \mu_m$ as $j \to \infty$ followed by $n\to\infty$, and hence condition (ii) from \eqref{constraints} holds. Also we have $\sum_{m=0}^{\infty} \eta(m)\widetilde \mu_m (W) \leq C$. Altogether, $\widetilde \Psi$ is an admissible trajectory setting.

Now, using the arguments of the proofs of Propositions \ref{prop-combinatorics} and \ref{prop-SIRcomputations} (which also involve the coarsened limits $\widetilde \Psi^{\delta_n}$ for fixed $n \in \mathbb N$) for the subsequential limits $j \to \infty$ followed by $n \to \infty$, we conclude that \[ \lim_{n \to \infty} \lim_{j \to \infty} \frac{\# J^{\delta_n,\lambda_{n,j}}(\widetilde \Psi^{\delta_n,\lambda_{n,j}})}{\inf_{r\in J^{\delta_n,\lambda_{n,j}}(\widetilde \Psi^{\delta_n,\lambda_{n,j}})}\prod_{i\in I^{\lambda_{n,j}}}N(\lambda_{n,j})^{- (r^i_{-1}-1)}}=\mathrm I(\widetilde \Psi) \] and, using the boundedness and continuity of each $f_k$ on $\Mcal_{\leq 2 \mu(W)}(W) \times W^k$, \[ \lim_{n \to \infty} \lim_{j \to \infty} \inf_{r\in J^{\delta_n,\lambda_{n,j}}(\widetilde \Psi^{\delta_n,\lambda_{n,j}})}\frac 1{\lambda_{n,j}} \mathfrak S(r) = \mathrm S(\widetilde \Psi). \] 
Furthermore, the lower semicontinuity of $\mathcal{M}(W)^{\N_0} \to (-\infty,\infty]$, $(\nu_m)_{m\in\N_0} \mapsto \sum_{m\in\N_0}\eta(m) \nu_m(W)$, together with Fatou's lemma implies that
\[ -\beta \liminf_{n \to \infty}\liminf_{j \to\infty}\inf_{r\in J^{\delta_n,\lambda_{n,j}}(\widetilde \Psi^{\delta_n,\lambda_{n,j}})}\frac 1{\lambda_{n,j}} \mathfrak M(r) \leq -\beta \mathrm M (\widetilde \Psi).  \numberthis\label{M-upper} \]

Thus, we conclude that \eqref{esistgenug} (and therefore the upper bound in  Theorem \ref{theorem-variation1}) holds, as soon as Lemma \ref{lemma-Gstillnegligible} is formulated and verified. This we do now. Recall that we are working with fixed $\omega \in \Omega_1$, and that the notion of $G(\delta,\lambda)$ depends on $\omega$ via $G(\delta,\lambda)=G(\delta,\lambda)(X^\lambda(\omega))$.

\begin{lemma} \label{lemma-Gstillnegligible}
For any $\delta \in \mathbb B$ and $\omega \in \Omega_1$, we have
\[ \limsup_{\lambda \to \infty} \frac{1}{\lambda} \log \# G(\delta,\lambda) = 0. \]
\end{lemma}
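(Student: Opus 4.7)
The plan is to factor the count $\#G(\delta,\lambda)$ into two independent contributions --- one from the tuples $(\nu_k^{\delta,\lambda})_{k=1}^{\kmax}$ and one from the tuples $(\mu_m^{\delta,\lambda})_{m\in\N_0}$ --- and to bound each separately, showing that the first grows only polynomially in $\lambda$ while the second grows sub-exponentially, as $\exp(O_\delta(\sqrt{\lambda}))$. Dividing their product by $\lambda$ and letting $\lambda\to\infty$ will immediately give the claim.

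For the $\nu$-factor, I would apply a standard stars-and-bars bound. Since $\sum_{k=1}^{\kmax} \pi_0\nu_k^{\delta,\lambda}=L_\lambda^\delta$ and $L_\lambda(W)\leq 2\mu(W)$ for $\lambda$ large (this is where $\omega\in\Omega_1$ enters), each $\lambda\nu_k^{\delta,\lambda}$ is an $\N_0$-valued measure of total mass at most $2\mu(W)\lambda$ supported on a fixed number $\delta^{-dk}$ of product cells of $W^k$. Hence the number of admissible $\nu$-tuples is at most $\prod_{k=1}^{\kmax}\binom{2\mu(W)\lambda+\delta^{-dk}}{\delta^{-dk}}$, which is polynomial in $\lambda$ for fixed $\delta$ and so contributes $0$ to $\lambda^{-1}\log$.

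For the $\mu$-factor, given an admissible $\nu$-part, introduce $k_{m,i}=\lambda\mu_m^{\delta,\lambda}(W_i^\delta)$, $n_i=\lambda L_\lambda^\delta(W_i^\delta)$ and $M_i=\lambda\bigl(\sum_{k=1}^{\kmax}\sum_{l=1}^{k-1}\pi_l\nu_k^{\delta,\lambda}\bigr)(W_i^\delta)$. Constraints (ii) and (iii) translate, cell by cell, to $\sum_m k_{m,i}=n_i$ and $\sum_m m\,k_{m,i}=M_i$. The key observation is that such a sequence is in bijection with a partition of $M_i$ into at most $n_i$ parts (with $k_{0,i}$ then forced), so the count per cell is bounded by the unrestricted integer partition function $p(M_i)$. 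Combining the Hardy-Ramanujan bound $\log p(M)\leq c\sqrt{M}$ with Cauchy-Schwarz over the $\delta^{-d}$ cells and the estimate $\sum_i M_i = \lambda\sum_{k=1}^{\kmax}(k-1)\nu_k^{\delta,\lambda}(W^k)\leq (\kmax-1)\lambda L_\lambda(W) = O(\lambda)$ yields
\[ \sum_{i=1}^{\delta^{-d}}\log p(M_i)\,\leq\, c\,\delta^{-d/2}\sqrt{\sum_{i=1}^{\delta^{-d}} M_i}\,\leq\, C_\delta\sqrt{\lambda}. \]

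Multiplying the two bounds gives $\#G(\delta,\lambda)\leq\mathrm{poly}(\lambda)\cdot\exp(C_\delta\sqrt{\lambda})$, from which the conclusion follows after dividing by $\lambda$. The one delicate point --- and the main obstacle to overcome --- is the a priori infinite index set $m\in\N_0$: naively the sum over $m$ could contribute infinitely many nonzero terms. This is resolved by exploiting constraint (iii), which forces any $m$ with $k_{m,i}\geq 1$ to satisfy $m\leq M_i=O(\lambda)$; only $O(\lambda)$ values of $m$ contribute, and the partition-function bound then keeps the count sub-exponential in $\lambda$.
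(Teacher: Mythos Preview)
Your proof is correct and, for the $\mu$-part, takes a genuinely different and sharper route than the paper's.

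Both you and the paper split the count as $\#G(\delta,\lambda)\leq \#G_1(\delta,\lambda)\cdot\sup_{\nu}\#\{\mu\text{-parts compatible with }\nu\}$ and handle the $\nu$-factor by the same stars-and-bars/polynomial argument. The difference lies in bounding the $\mu$-factor. The paper argues that in each cell $W_j^\delta$ at most $\varepsilon N(\lambda)$ of the quantities $\mu_m^{\delta,\lambda}(W_j^\delta)$ can be nonzero (else the first-moment constraint would force $\sum_m m\,\mu_m^{\delta,\lambda}(W_j^\delta)$ to exceed $(\kmax-1)N(\lambda)/\lambda$), then combines a ``choose the support'' binomial with a compositions binomial and lets $\varepsilon\downarrow 0$ after Stirling; this yields only $\exp(o(\lambda))$. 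You instead observe that, cell by cell, the sequence $(k_{m,i})_{m\geq 1}$ is exactly the multiplicity profile of an integer partition of $M_i$ into at most $n_i$ parts (with $k_{0,i}$ then forced by constraint~(ii)), bound by the unrestricted partition number $p(M_i)$, and invoke the Hardy--Ramanujan estimate $\log p(M)\leq c\sqrt{M}$ together with Cauchy--Schwarz over cells. This gives the quantitatively stronger bound $\exp(C_\delta\sqrt{\lambda})$ in place of $\exp(o(\lambda))$, and avoids the auxiliary parameter $\varepsilon$ and the two-binomial estimate altogether. The paper's argument, on the other hand, is more self-contained in that it does not appeal to the asymptotics of $p(\cdot)$.

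One small remark: you should state explicitly that $\#G(\delta,\lambda)\geq 1$ (take $\nu_1^{\delta,\lambda}=L_\lambda^\delta$, $\nu_k^{\delta,\lambda}=0$ for $k>1$, $\mu_0^{\delta,\lambda}=L_\lambda^\delta$, $\mu_m^{\delta,\lambda}=0$ for $m\geq 1$), so that the $\limsup$ is also $\geq 0$; the paper leaves this implicit as well.
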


\begin{proof}
For $\lambda>0$, let $G_1(\delta,\lambda)$ denote the set of $(\nu_k^{\delta,\lambda})_{k=1}^{\kmax}$ satisfying part  \eqref{i-standardsetting} from Definition \ref{defn-standardsetting}. It is easily seen that its cardinality increases only polynomially in $\lambda$. Now, given $(\nu_k^{\delta,\lambda})_{k=1}^{\kmax}\in G_1(\delta,\lambda)$, we will give an upper bound for the number of $(\mu_m^{\delta,\lambda})_{m=0}^{\infty})$ such that the pair of these tuples is in $G(\delta,\lambda)$. This is much more demanding, since there is {\it a priori} no upper bound for $m$. We will provide a $\lambda$-dependent one.

For any $\lambda>0$, $\Psi^{\delta,\lambda} \in G(\delta,\lambda)$ and $j=1,\ldots,\delta^{-d}$ we have that
\[ \lambda \sum_{m=0}^{\infty} m \mu_m^{\delta,\lambda} (W^\delta_j) = \lambda \sum_{k=1}^{\kmax} \sum_{l=1}^{k-1} \pi_l \nu_k^{\delta,\lambda}(W^\delta_j) \leq (\kmax-1) N(\lambda), \]
and the numbers $\mu_0^{\delta,\lambda}(W^\delta_j)$, $\ldots, \mu^{\delta,\lambda}_{(\kmax-1) N(\lambda)}(W^\delta_j)$, are $\frac{1}{\lambda}$ times nonnegative integers.
In particular, $\mu_m^{\delta,\lambda}(W^\delta_j)=0$ for $m>(\kmax-1) N(\lambda)$. 

Let $\varepsilon>0$ be fixed. We claim that for all sufficiently large $\lambda>0$, there are not more than $\varepsilon N(\lambda) \sim \varepsilon \lambda \mu(W) $ nonzero ones out of these quantities. Indeed, if there were at least $\lceil \varepsilon N(\lambda) \rceil$ nonzero ones, denoted $\mu_{m_0}^{\delta,\lambda}(W^\delta_j), \ldots, \mu_{m_{\lceil \varepsilon N(\lambda)\rceil -1}}^{\delta,\lambda}(W^\delta_{j})$ with $0 \leq m_0<m_1  <\ldots < m_{\lceil \varepsilon N(\lambda) \rceil -1} \leq (\kmax-1)N(\lambda)$, then we could estimate
\begin{align*} 
& (\kmax-1)N(\lambda) \geq \sum_{m=0}^{(\kmax-1) N(\lambda)} \lambda m \mu_m^{\delta,\lambda} (W^\delta_j) \geq \sum_{i=0}^{\lceil \varepsilon N(\lambda) \rceil -1} \lambda m_i \mu_{m_i}^{\delta,\lambda} (W^\delta_j) \1 \big\{ \mu_{m_i}^{\delta,\lambda} (W^\delta_j) > 0 \big\} \\ &  = \sum_{i=0}^{\lceil \varepsilon N(\lambda) \rceil -1} \lambda m_i \mu_{m_i}^{\delta,\lambda} (W^\delta_j) \1 \big\{ \mu_{m_i}^{\delta,\lambda} (W^\delta_j) \geq \frac{1}{\lambda} \big\} \geq \sum_{i=0}^{\lceil \varepsilon N(\lambda) \rceil-1} m_i \geq \sum_{m=0}^{\lceil \varepsilon N(\lambda) \rceil -1} m \sim \frac{1}{2} (\varepsilon N(\lambda))(\varepsilon N(\lambda)-1),
\end{align*}
which is a contradiction for all $\lambda>0$ sufficiently large. 

Now, $\# G(\delta,\lambda)$ can be estimated as follows. Let us first fix $(\nu_k^{\delta,\lambda})_{k=1}^{\kmax}\in G_1(\delta,\lambda)$, i.e., satisfying part \eqref{i-standardsetting} from Definition \ref{defn-standardsetting}, and let us count the number of $(\mu_m^{\delta,\lambda})_{m=0}^{(\kmax-1) N(\lambda)}$ such that $((\nu_k^{\delta,\lambda})_{k=1}^{\kmax},(\mu_m^{\delta,\lambda})_{m=0}^{(\kmax-1) N(\lambda)}))$ lies in $G(\delta,\lambda)$. Out of the $\kmax \delta^{-d} N(\lambda)$ quantities $\mu_0^{\delta,\lambda}(W^\delta_j), \ldots, \mu^{\delta,\lambda}_{(\kmax-1) N(\lambda)}(W^\delta_j)$, $j=1,\ldots,\delta^{-d}$, at most $\lceil \varepsilon N(\lambda) \rceil\delta^{-d}$ are nonzero. The number of ways to choose them equals $\binom{\kmax N(\lambda)\delta^{-d}}{\lceil \varepsilon N(\lambda) \rceil\delta^{-d}}$. Having chosen $\lceil \varepsilon N(\lambda) \rceil\delta^{-d}$ potentially nonzero 
ones so that the remaining $\kmax \delta^{-d} N(\lambda)-\lceil \varepsilon N(\lambda) \rceil\delta^{-d}$ ones are equal to zero, according to part \eqref{ii-standardsetting} of Definition \ref{defn-standardsetting} we note that the potentially nonzero ones sum up to $N(\lambda)$, and each one has a value in $\frac{1}{\lambda}\N_0$. For this, there are at most $\binom{N(\lambda)+\lceil \varepsilon N(\lambda) \rceil\delta^{-d}-1}{\lceil \varepsilon N(\lambda) \rceil\delta^{-d}-1}$ combinations, for any choice of the set of the potentially nonzero ones. Since $\omega \in \Omega_1$, we have $N(\lambda)=N(\lambda)(\omega)=\lambda(\mu(W)+o(1))$ as $\lambda \to \infty$ (where the $o(1)$ term depends on $\omega$). Therefore, using Stirling's formula as in  \eqref{Stirling}, we have the following estimate in the limit $\lambda \to \infty$
\begin{align*} 
 \# G(\delta,\lambda) &\leq \# G_1(\delta,\lambda) \binom{\kmax  N(\lambda)\delta^{-d}}{\lceil \varepsilon N(\lambda) \rceil\delta^{-d}} \binom{N(\lambda)+\lceil \varepsilon N(\lambda) \rceil\delta^{-d}-1}{\lceil \varepsilon N(\lambda)  \rceil\delta^{-d}-1} \\ 
& = \e^{o(\lambda)}\exp\Big(-\lambda \mu(W)\Big((\kmax -\varepsilon)\delta^{-d} \log \frac{(\kmax-\varepsilon)\delta^{-d}}{\kmax \delta^{-d}}+\varepsilon \delta^{-d}\log \frac{\varepsilon\delta^{-d}}{\kmax \delta^{-d}}\Big)\Big)\\ 
& \qquad\times \exp\Big(-\lambda \mu(W) \Big( \varepsilon\delta^{-d} \log \frac{\varepsilon\delta^{-d}}{1+\varepsilon\delta^{-d}}+\log \frac{1}{1+\varepsilon\delta^{-d}} \Big) \Big)
\end{align*}
Letting $\varepsilon\downarrow 0$, we conclude that $\limsup_{\lambda \to \infty} \frac{1}{\lambda} \log \# G(\delta,\lambda)=0$. 
\end{proof}

\subsection{The large deviation principle: proof of Theorem \ref{thm-LDP}(i)}\label{sec-LDPproof}

In this section, we prove Theorem \ref{thm-LDP}(i). The combinatorial essence of this theorem has already been proved in Proposition \ref{prop-combinatorics}, including the relations with $\delta$-coarsenings. What remains to be done is to relate this to the coordinatewise weak convergence on $\mathcal A$. We will be able to use some of the arguments of Section \ref{sec-varproof}. 

The lower semicontinuity of $\mathrm I+\mu(W) \log \kmax$ was already discussed in Section \ref{sec-freeenergy}, the nonnegativity in Section \ref{sec-LDP}. These together mean that $\mathrm I+\mu(W) \log \kmax$ is a rate function.

%Now we show that it is a good rate function, i.e.~its level sets $\Phi_\alpha=\lbrace I+\mu(W)\log \kmax \leq \alpha \rbrace \subseteq \mathcal A$, $\alpha \geq 0$, are compact in $\mathcal A$. Fix $\alpha \geq 0$. It suffices to show that $\Phi_\alpha$ is contained in some compact subset of $\mathcal A$. Note that each $\Psi \in \Phi_\alpha$ satisfies (i), (ii) and (iii) of \eqref{constraints}, by definition of $\mathrm I$. In particular, on $\Phi_\alpha$, each of the terms of \eqref{I-entropies} is bounded from below. Looking at the sum of the second term and recalling Stirling's formula, we immediately see that
%\[ \sup_{\Psi \in \Phi_\alpha} \sum_{m \in \mathbb N_0} (m\log m)\mu_m(W) <\infty.\] Hence, there is an $\alpha'\in\R$ such that $\Phi_\alpha$ is contained in
%$$
%\Big\{\Psi\in\Acal\colon \Psi \text{ satisfies (i), (ii) and (iii) of \eqref{constraints}, and  }\sum_{m \in \mathbb N_0} (m\log m)\mu_m(W)\leq\alpha'\Big\}.
%$$
%This set is compact, as one sees from an application of Fatou's lemma, using that $\Mcal_{\leq q}(W^k)$ (defined in Section \ref{sec-varproof} after \eqref{leq}) is compact for any $q$ by  Prohorov's theorem.

We proceed with the proof of the lower bound. Let $G \subseteq \mathcal A$ be open. If $\inf_G \mathrm I=\infty$, then there is nothing to show, therefore let us assume that there exists $\Psi \in G$ with $\mathrm I(\Psi)<\infty$.  According to Proposition~\ref{prop-standardsettingconstruction}, there is a standard setting $\underline\Psi$ containing $\Psi$. Since $G$ is open, there exists $\varrho>0$ such that $B_{\varrho}(\Psi) \subseteq G$. Let us choose $\delta_0 \in \mathbb B$ and, for any $\mathbb B \ni \delta \leq \delta_0$, some $\lambda_0=\lambda_0(\delta)>0$  such that $\Psi^\delta, \Psi^{\delta,\lambda} \in B_{\varrho}(\Psi)$ for any $\lambda>\lambda_0$. Now we can estimate, for these $\delta$ and $\lambda$,
$$
\begin{aligned}
\mathrm P^{0,0}_{\lambda,X^\lambda}(\Psi_\lambda (S) \in G)&\geq \mathrm P^{0,0}_{\lambda,X^\lambda}(\Psi_\lambda(S) \in B_{\varrho}(\Psi)) \geq \mathrm P^{0,0}_{\lambda,X^\lambda}\big((\Psi_\lambda(S))^\delta=\Psi^{\delta,\lambda}\big)\\
&=\frac 1{Z^{0,0}_{\lambda}(X^\lambda)} \sum_{r\in J^{\delta,\lambda}(\Psi^{\delta,\lambda})}\frac1{\prod_{i\in I^\lambda}N(\lambda)^{r^i_{-1}-1}}
\geq \frac{\# J^{\delta,\lambda}(\Psi^{\delta,\lambda})}{\kmax^{N(\lambda)} \sup_{r\in J^{\delta,\lambda}(\Psi^{\delta,\lambda})}\prod_{i\in I^\lambda}N(\lambda)^{r^i_{-1}-1}}.
\end{aligned}
$$
Now, using Proposition \ref{prop-combinatorics} and the fact that $N(\lambda)/\lambda\to\mu(W)$, we obtain 
$$
\liminf_{\lambda \to \infty} \frac{1}{\lambda} \log \mathrm P^{0,0}_{\lambda,X^\lambda}(\Psi_\lambda (S) \in G)
\geq -\mu(W)\log \kmax-\mathrm I(\Psi).
$$
Note that $\underline \Psi$ is not necessarily controlled because $\mathrm M(\Psi)<\infty$ is not guaranteed. However, since for all $\delta \in \mathbb B$, $s=1,\ldots,\delta^{-d}$, $\lambda>0$,  $\mu_m^{\delta,\lambda}(W^\delta_s)/\mu_m^{\delta}(W^\delta_s)$ does not depend on $m$, we easily see that Proposition \ref{prop-combinatorics} holds for this $\underline \Psi$ as well. Now, take the supremum over $\Psi \in G \cap \lbrace \mathrm I<\infty \rbrace$ to conclude that the lower bound holds.

We continue with the upper bound. Let $F \subseteq \mathcal A$ be closed. Let us choose an increasing sequence $(\lambda_n)_{n \in \mathbb N}$ of positive numbers along which the limit superior in \eqref{upper} is realized. For $\lambda>0$, let  us put 
$$
O(\lambda) =\big\{ \Psi \in \mathcal{A} \colon \mathrm P^{0,0}_{\lambda,X^\lambda}(\Psi_{\lambda}(S) =\Psi)>0 \big\}.
$$
If for all but finitely many $n \in \mathbb N$ we have $F \cap O(\lambda_n)=\emptyset$, then 
\[ \limsup_{\lambda \to \infty} \frac{1}{\lambda} \log \mathrm P^{0,0}_{\lambda,X^\lambda}(\Psi_\lambda (S) \in F) = -\infty. \numberthis\label{minusinfty} \] 
Therefore, without loss of generality, we can assume that $O(\lambda_n) \cap F$ is non-empty for all $n \in \mathbb N$. For $\delta \in \mathbb B$ and $A\subset\Acal$, let us write $A^\delta= \lbrace \Psi^\delta \colon \Psi \in A \rbrace$, where $\Psi^\delta$ is the coordinatewise $\delta$-coarsened version of $\Psi$. Then we have
\[
\begin{aligned}
\mathrm P^{0,0}_{\lambda_n,X^{\lambda_n}}\big(\Psi_{\lambda_n}(S) \in F) &= \mathrm P^{0,0}_{\lambda_n,X^{\lambda_n}}\big(\Psi_{\lambda_n}(S) \in F \cap O(\lambda_n)\big) = \mathrm P^{0,0}_{\lambda_n,X^{\lambda_n}}\big((\Psi_{\lambda_n}(S))^{\delta} \in (F \cap O(\lambda_n))^{\delta})\\
&\leq\# (F \cap O(\lambda_n))^\delta\sup_{\Psi \in F \cap O(\lambda_n)} \frac{\# J^{\delta,\lambda_n}(\Psi^\delta)}{\kmax^{N(\lambda_n)} \inf_{r\in J^{\delta,\lambda_n}(\Psi^\delta)}\prod_{i\in I^{\lambda_n}} N(\lambda_n)^{r^i_{-1}-1}}. 
\end{aligned} \numberthis\label{supinside2}
\]
It is clear that $(F \cap O(\lambda_n))^{\delta} \subseteq G(\delta,\lambda_n)=(O(\lambda_n))^\delta$ for all $n \in \mathbb N$ and $\delta \in \mathbb B$, where $G(\delta,\lambda_n)$ was defined in Section \ref{sec-varproof}. Hence, by Lemma \ref{lemma-Gstillnegligible},
\[ \limsup_{\delta \downarrow 0} \limsup_{n \to \infty} \frac{1}{\lambda_n} \log \# (F \cap O(\lambda_n))^{\delta} = 0.  \] 
It remains to show that
\[ \limsup_{\delta \downarrow 0} \limsup_{n \to \infty} \frac{1}{\lambda_n} \log \Big[ \sup_{\Psi \in F \cap O(\lambda_n)} \frac{\# J^{\delta,\lambda_n}(\Psi^\delta)}{\inf_{r\in J^{\delta,\lambda_n}(\Psi^\delta)}\prod_{i\in I^{\lambda_n}} N(\lambda_n)^{r^i_{-1}-1}} \Big] \leq -\inf_{\Psi \in F} \mathrm I(\Psi). \numberthis\label{upperwish} \]
One can do this analogously to the proof of the upper bound of Theorem \ref{theorem-variation1} starting from \eqref{supinside}. 
Indeed, using Prohorov's theorem together with a diagonal sequence argument, we find $\Psi^\ast \in \mathcal A$ that the maximizer in \eqref{supinside2} converges to along a subsequence of $\delta$'s and $\lambda_n$'s. The limit lies in $F$ because $F$ is closed. Using the lower semicontinuity of $\mathrm I$ together with Fatou's lemma, we conclude that the left-hand side of \eqref{upperwish} is not larger than $-\mathrm I(\Psi^\ast) $, which itself is not larger than $-\inf_F \mathrm I$. This finishes the proof of the upper bound in Theorem \ref{thm-LDP}(i).

\section{Analysis of the minimizers} \label{sec-minimizers}
This section is devoted to the proof of Proposition \ref{prop-minimizer}. In particular, in Section \ref{sec-positivity}, we show that the infimum in \eqref{variation} is attained and, for any minimizer $\Psi=((\nu_k)_{k=1}^{\kmax},(\mu_m)_{m=0}^{\infty})$, for any $k\in[\kmax]$, $\mu^{\tensor k}$ is absolutely continuous with respect to $\nu_k$ and $\mu$ is absolutely continuous with respect to each $\mu_m$. Further, for all $k \in [\kmax]$ and $m \in \N_0$, there exist constants $c_k>0$, $k \in [\kmax]$, and $c'_m>0$, $m \in \N_0$ such that $\nu_k(A) \geq c_k \mu^{\tensor k}(A)$ for all $A \subseteq W^k$ measurable and $\mu_m(A') \geq c_m'(A')$ for all $A' \subseteq W$ measurable. This is a prerequisite for perturbing the minimizer in many admissible directions. In Section \ref{sec-minimizerproof} we finish the proof of Proposition \ref{prop-minimizer} by deriving the Euler--Lagrange equations. For the rest of the section, we fix all parameters $W, \mu,\gamma,\beta$ and $\kmax$. Moreover, we use the 
following 
representation of $\mathrm I$ from \eqref{quenchedentropy}.
\[ \mathrm I(\Psi)= \sum_{k=1}^{\kmax} H_{W^k}(\mu \tensor M^{\tensor (k-1)})+\sum_{m=0}^{\infty} H_W(\mu_m \mid \mu)-\mu_m(W)\log \frac{(\e\mu(W))^{-m}}{m!}. \numberthis\label{I-minimizer} \]

\subsection{Existence and positivity of the minimizers} \label{sec-positivity}

We start with the following lemma, which follows almost immediately from the arguments of the proof of the upper bound of Theorem \ref{theorem-variation1} in Section \ref{sec-varproof}.

\begin{lemma} \label{lemma-existence}
The set of minimizers of the variational formula in \eqref{variation} is non-empty, compact and convex.
\end{lemma}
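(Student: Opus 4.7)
The plan is to first establish compactness of sub-level sets of the penalty $\mathrm M$, from which existence, compactness, and convexity of the minimizer set will follow by soft arguments. Throughout, write $F(\Psi) = \mathrm I(\Psi) + \gamma \mathrm S(\Psi) + \beta \mathrm M(\Psi)$ and $I^\ast = \inf_\Psi F(\Psi)$, with the infimum taken over admissible trajectory settings. For $C>0$, set $B_C = \{\Psi\text{ admissible}: \mathrm M(\Psi) \le C\}\subset\Acal$, and I would first show that $B_C$ is compact in $\Acal$. Any admissible $\Psi = ((\nu_k)_k, (\mu_m)_m)$ has $\nu_k(W^k) \le \mu(W)$ and $\mu_m(W) \le \mu(W)$ by (i) and (ii) of \eqref{constraints}, so each coordinate lies in $\Mcal_{\le \mu(W)}(W^k)$ or $\Mcal_{\le \mu(W)}(W)$, each compact in the weak topology by Prohorov (since $W$ is compact); by Tychonoff, sequences in $B_C$ admit coordinatewise weakly convergent subsequences. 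The non-trivial point is preserving admissibility in the limit: (i) is stable under weak convergence since marginals are continuous; for (ii) and (iii), the estimate $\sum_m \eta(m)\mu_m(W) \le C$ combined with $\eta(m)/m \to \infty$ and $\eta$ bounded below yields the uniform tail bounds $\sum_{m \ge N}\mu_m(W) \le C/\inf_{m \ge N}\eta(m) \to 0$ and $\sum_{m \ge N}m \mu_m(W) \to 0$, which together with coordinatewise weak convergence on the finite blocks pass the sums to the limit. Lower semicontinuity of $\mathrm M$ gives $\mathrm M \le C$ on the limit, so $B_C$ is compact.

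Existence of a minimizer then follows by the direct method. First, $I^\ast$ is finite: evaluate $F$ at the trivial admissible setting $\nu_1 = \mu$, $\nu_k = 0$ for $k\ge2$, $\mu_0 = \mu$, $\mu_m = 0$ for $m \ge 1$, for which each summand is manifestly finite (in particular $M=0$, so the first sum in $\mathrm I$ collapses to $\Hcal_W(\mu\mid\mu)=0$). Since $\mathrm I$ and $\mathrm S$ are bounded below, any minimizing sequence $(\Psi_n)$ has $\mathrm M(\Psi_n) \le C$ for some $C$ and hence lies in $B_C$; extract a subsequential limit $\Psi^\ast \in B_C$. Lower semicontinuity of $\mathrm I$ and $\mathrm M$ (both noted in Section~\ref{sec-freeenergy}) and continuity of $\mathrm S$ (each $\widetilde f_k$ is continuous and hence bounded on the compact $W^k$) give $F(\Psi^\ast) \le \liminf_n F(\Psi_n) = I^\ast$, so $\Psi^\ast$ is a minimizer.

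Convexity of the minimizer set is immediate: $\mathrm I$ is convex (asserted after \eqref{quenchedentropy}), $\mathrm S$ is linear in $(\nu_k)_k$, and $\mathrm M$ is linear in $(\mu_m)_m$; since the admissibility constraints (i)--(iii) are also linear, the feasible set is convex, and so is $\{F \le I^\ast\}$. For compactness, the minimizer set equals $\{F \le I^\ast\}$ intersected with the admissible settings, which is closed in $\Acal$ by lower semicontinuity of $F$ and the closedness of admissibility established above. On this set, $\beta \mathrm M = I^\ast - \mathrm I - \gamma\mathrm S \le I^\ast - \inf\mathrm I - \gamma\inf\mathrm S =: \beta C^\ast$, so the minimizer set lies inside $B_{C^\ast}$; being closed in a compact set, it is compact.

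The main obstacle is the compactness argument for $B_C$: because $(\mu_m)_{m\in\N_0}$ is an infinite coordinate family, ordinary coordinatewise weak compactness does not preserve the summation constraints (ii) and (iii) under limits. The super-linear growth $\eta(m)/m \to \infty$ supplies precisely the uniform-integrability bounds that force the tails of $\sum_m \mu_m$ and $\sum_m m\mu_m$ to be uniformly small across the whole sequence, which is the technical crux of the proof.
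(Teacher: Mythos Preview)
Your proof is correct and follows essentially the same route as the paper: restrict to the sub-level set $B_C=\{\mathrm M\le C\}$, use the super-linear growth of $\eta$ to obtain uniform tail control on $\sum_m\mu_m$ and $\sum_m m\mu_m$ (so admissibility passes to limits), and then conclude by lower semicontinuity and convexity of $\mathrm I,\mathrm S,\mathrm M$. The paper's version is a one-paragraph sketch that defers the compactness of $B_C$ to the arguments already carried out in the proof of Theorem~\ref{theorem-variation1}, whereas you spell those details out explicitly; the only cosmetic slip is that you invoke ``closedness of admissibility'' in general, but what you actually need and have is closedness of $B_{C^\ast}$, which suffices since the minimizer set is contained in it.
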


\begin{proof} Recall that the three functionals I, S, M are lower semicontinuous and convex. Furthermore, it is clear that we can restrict the infimum in \eqref{variation} to those $\Psi$ that satisfy also ${\rm M}(\Psi)\leq C$ for any sufficiently large $C$. But, as we have seen in Section \ref{sec-varproof}, this set of $\Psi$'s is compact. From this, all our assertions easily follow.
\end{proof}

Now we prove that, for each minimizer $\Psi$,  $\mu^{\tensor k}$ is absolutely continuous with respect to $\nu_k$ and $\mu$ is absolutely continuous with respect to each $\mu_m$, and the corresponding Radon--Nikodym derivatives are even bounded away from 0. (Note that the opposite absolute continuities are true by finiteness of the relative entropies in \eqref{quenchedentropy}.) We need to show this only for $\kmax>1$, as we explained after Proposition \ref{prop-minimizer}. Let us start with verifying the absolute continuities.

\begin{lemma} \label{lemma-everywherepositive}
If $\kmax>1$ and  $\Psi=((\nu_k)_{k=1}^{\kmax},(\mu_m)_{m=0}^{\infty})$ is a minimizer of \eqref{variation}, then $\mu^{\tensor k}\ll\nu_k$ for any $k \in [\kmax]$, and $\mu\ll\mu_m$ for any $m \in \N_0$. 
\end{lemma}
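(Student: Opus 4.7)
The plan is to argue by contradiction via a perturbation argument. The guiding mechanism is that if the minimizer $\Psi$ assigns zero mass to a region on which the reference measure of the associated relative entropy assigns positive mass, then an $\eps$-perturbation adding mass into that region yields a change of order $\eps\log\eps$ in the entropy cost, which is strictly negative as $\eps\downarrow 0$ and dominates all $O(\eps)$ corrections coming from compensating adjustments and from the penalties $\gamma\mathrm S$, $\beta\mathrm M$. Since the minimizer has $\mathrm I(\Psi)<\infty$, it is already known that $\nu_k\ll\mu\otimes M^{\otimes(k-1)}$ for every $k$ and $\mu_m\ll\mu$ for every $m$; it remains to establish the reverse absolute continuities.

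For the assertion $\mu\ll\mu_m$, suppose for contradiction that $\mu_{m_0}(A)=0$ while $\mu(A)>0$ for some Borel $A\subseteq W$ and $m_0\in\N_0$. By constraint (ii) of \eqref{constraints}, there exists $m_1\neq m_0$ together with a subset of $A$ of positive $\mu$-measure on which $\mu_{m_1}$ has density bounded away from zero; after shrinking, assume this on $A$ itself. Setting $\phi:=\mathds{1}_A\mu$, define $\mu_{m_0}^\eps:=\mu_{m_0}+\eps\phi$ and $\mu_{m_1}^\eps:=\mu_{m_1}-\eps\phi$, which preserves constraint (ii) but creates an excess of $(m_0-m_1)\eps\phi$ in constraint (iii). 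Repair this by simultaneously adding to some $\nu_k$, $k\geq 2$, a signed measure whose marginal sum $\sum_{l=1}^{k-1}\pi_l$ equals $(m_0-m_1)\phi$ while $\pi_0$ vanishes; such signed measures are readily constructed on $W^k$ for $k\geq 2$, and by the finite-entropy hypothesis one may take their support inside a region where $\nu_k$ has density bounded below, keeping the perturbation admissible for small $\eps$. The net change in $\mathrm I+\gamma\mathrm S+\beta\mathrm M$ equals $\eps\phi(A)\log\eps + O(\eps)$, which is strictly negative for all small $\eps>0$, contradicting the minimality of $\Psi$.

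For the assertion $\mu^{\otimes k}\ll\nu_k$, suppose $\nu_k(B)=0$ and $\mu^{\otimes k}(B)>0$ for some Borel $B\subseteq W^k$. When $k\geq 2$, use perturbations of $\nu_k$ by signed measures all of whose one-dimensional marginals vanish; such perturbations preserve constraints (i) and (iii) automatically, so no coupling to other components is required. When $(\mu\otimes M^{\otimes(k-1)})(B)>0$ the same $\eps\log\eps$ argument applies directly; when it equals zero, first run the $\mu_m$ argument to enlarge the essential support of $M=\sum_{m} m\mu_m$, deducing $M\gg\mu$, after which $\mu\otimes M^{\otimes(k-1)}\gg\mu^{\otimes k}$ and one reduces to the previous case. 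For $k=1$, transfer mass from some $\pi_0\nu_{k_0}$ with $k_0\geq 2$ to $\nu_1$ on $B$ (legitimate by constraint (i)), coupled with a simultaneous $\mu_m$-adjustment to restore constraint (iii).

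The principal obstacle is the constraint bookkeeping: no single-measure perturbation is feasible, so one must always couple two or more measures, and each compensating perturbation must itself be supported on a positive-density region of the corresponding measure. This regularity is provided by the finite-entropy hypothesis, which supplies a density of each $\nu_k$ and each $\mu_m$ against its reference measure and hence positive-mass sets from which compensating mass can be drawn. Once the explicit perturbations have been fixed, the dominance of the $-\eps|\log\eps|$ entropy gain over the $O(\eps)$ corrections from $\mathrm I$, $\mathrm S$ and $\mathrm M$ reduces to an elementary Taylor expansion of the relative entropy.
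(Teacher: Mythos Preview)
Your overall mechanism (an $\eps\log\eps$ entropy gain beating $O(\eps)$ corrections) is exactly the engine the paper uses. The gap is in the constraint repair.

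For the $\mu_m$ step you propose to ``add to some $\nu_k$, $k\ge 2$, a signed measure whose marginal sum $\sum_{l=1}^{k-1}\pi_l$ equals $(m_0-m_1)\phi$ while $\pi_0$ vanishes''. This is impossible for a single $k$: if $\pi_0\tau_k=0$ then $\tau_k(W^k)=0$, hence $\pi_l\tau_k(W)=0$ for every $l$, so $\sum_{l=1}^{k-1}\pi_l\tau_k$ has total mass zero and cannot equal $(m_0-m_1)\phi$ with $\phi(W)=\mu(A)>0$. Even if you spread the repair over several $\nu_k$'s, you then need the negative part of each $\tau_k$ to sit where $\nu_k$ has density bounded below; your appeal to ``the finite-entropy hypothesis'' only yields $\nu_k\ll\mu\otimes M^{\otimes(k-1)}$, not any lower bound, so you are assuming for the $\nu_k$'s precisely the kind of positivity you are trying to prove. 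The same circularity bites your $\nu_k$ argument: a signed $\tau_k$ with all marginals zero must have a nontrivial negative part, and you have no control over where $\nu_k$ is positive to absorb it.

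The paper sidesteps this by ordering the steps and choosing perturbations whose nonnegativity is automatic. It first shows $M(A)>0$; then, to get $\mu_0(A)>0$, it scales \emph{all} $\nu_k$ and $\mu_m$ by $(1-\eps)$ and adds $\eps\mu$ back to $\nu_1$ and $\mu_0$ (so every perturbed measure is a convex combination of nonnegative measures). To get $\mu_{m_0}(A)>0$ for $m_0\ge \kmax$, it modifies \emph{only} the $\mu_m$'s, setting $\mu_{m_0}^\eps=(1-\eps)\mu_{m_0}+\eps M/m_0$ and $\mu_m^\eps=(1-\eps)\mu_m$ for $m\neq 0,m_0$, with $\mu_0^\eps$ absorbing the remainder; since $m_0\ge\kmax$ one checks $\mu_0^\eps\ge(1-\eps)\mu_0\ge 0$, and crucially $\sum_m m\mu_m^\eps=M$ is unchanged, so the $\nu_k$'s are never touched. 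A sandwich argument among $\mu_{m_1},\mu_{m_0},\mu_{m_2}$ (again $\nu_k$ untouched, $M$ unchanged) fills in the remaining $m$'s. Only after $\mu\ll\mu_m$ and hence $\mu\ll M$ is secured does the paper turn to $\nu_k$, where positivity of $M$ is now available. The moral is that the bookkeeping you flagged as ``the principal obstacle'' is the whole difficulty, and it is resolved not by generic signed-measure surgery but by these specific, order-dependent, manifestly nonnegative perturbations.
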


\begin{proof} 
The essence of the proof is the following. The functionals $\mathrm M(\cdot)$ and $\mathrm S(\cdot)$ are linear in each $\mu_m$ respectively $\nu_k$, as well as the third term in $\mathrm I(\cdot)$ in \eqref{quenchedentropy} in each $\mu_m$. On the other hand, the function $x \mapsto x \log x$ has the slope $-\infty$ at $x \downarrow 0$. We show the following assertions about the minimizer $\Psi$ step by step as follows. Recall that $M=\sum_{m \in \mathbb N_0} m \mu_m=\sum_{k \in [\kmax]} \sum_{l=1}^{k-1} \pi_l \nu_k$. We write $\geq$ and $>$, respectively, between measures in $\Mcal(W^k)$ if their difference lies in $\Mcal(W^k)$, respectively in $\Mcal(W^k)\setminus\{0\}$.

Fix a measurable set $A\subset W$ such that $\mu(A)>0$. Then we have:
\begin{enumerate}
\item \label{Mnon0} $M(A)>0$.
\item\label{mumsandwich} for any $m_1 <m_0 <m_2$ such that $\mu_{m_1}(A)>0$ and $\mu_{m_2}(A) > 0$, also  $\mu_{m_0}(A) >0$.

\item \label{mu0} $\mu_0(A)> 0$.

\item \label{mukmax} $\mu_{m}(A)>0$ for any $m\geq \kmax$.

\item \label{nuksandwich} $\nu_k(A^k)>0$ for any $k\in[\kmax]$.
\end{enumerate}

Indeed, these steps are verified respectively as follows. In each of the steps, for $\varepsilon \in (0,1)$, we construct an admissible trajectory setting $\Psi^{\varepsilon}=((\nu_k^{\varepsilon})_{k=1}^{\kmax},(\mu_m^{\varepsilon})_{m=0}^{\infty})$ such that $\mathrm I(\Psi^\varepsilon)+\gamma \mathrm S(\Psi^\varepsilon)+\beta \mathrm M(\Psi^\varepsilon)<\mathrm I(\Psi)+\gamma \mathrm S(\Psi)+\beta \mathrm M(\Psi)$ for sufficiently small $\varepsilon>0$, and therefore $\Psi$ is not a minimizer of \eqref{variation}. 
\begin{enumerate}
\item If $M(A)=0$, then in particular $\mu_0(A)=\nu_1(A)=\mu(A)$ and $\mu_m(A)=0$ for all $m>0$. Also, $\pi_1\nu_2(A)=\nu_2(W \times A)=0$, according to the definition of $M$.
    
Let us define $\Psi^\varepsilon$ as follows:  $\nu_2^\eps=(1-\varepsilon)\nu_2
+\varepsilon (\mu^{\tensor 2}) /\mu(W) $, $\nu_k^\varepsilon=(1-\varepsilon) \nu_k$ for $k \neq 2$, $\mu_1^\eps=(1-\varepsilon)\mu_1+\varepsilon \mu $ and $\mu_m^\varepsilon=(1-\varepsilon)\mu_m$ for $m \neq 1$. Then we compute and estimate the three terms of the entropy ${\rm I}(\Psi)$ as follows.
\begin{equation*}
\begin{aligned}
&\sum_{k=1}^{\kmax} H_{W^k}\big(\nu_k^\varepsilon \mid \mu \tensor (M^\eps)^{\tensor (k-1)}\big) \\ & \leq \sum_{k=1}^{\kmax} H_{W \times (W\setminus A)^{k-1}} ((1-\varepsilon)\nu_k \mid \mu \tensor (M^\varepsilon)^{\tensor (k-1)})+H_{W\times A}\Big( \frac{\varepsilon \mu^{\tensor 2}}{\mu(W)} \mid \varepsilon \mu^{\tensor 2}\Big)+\Ocal(\varepsilon) \\  & \leq \sum_{k=1}^{\kmax} H_{W^k}(\mu \tensor M^{\tensor (k-1)})+\Ocal(\varepsilon), 
        \end{aligned}
    \end{equation*}
furthermore
\begin{equation}\label{epslogeps1}
\begin{aligned}
&\sum_{m=0}^{\infty} H_W(\mu_m^\varepsilon \mid \mu)-\mu_m^\eps(W)\log \frac{(\e\mu(W))^{-m}}{m!} \\ &\leq 
H_W((1-\varepsilon)\mu_m \mid \mu)-\mu_m(W)\log \frac{(\e\mu(W))^{-m}}{m!} + \mu(A) \varepsilon \log \varepsilon+\Ocal(\varepsilon).
\end{aligned}
\end{equation}

For the second term we used the convexity of the relative entropy in the form
\[ H_W((1-\varepsilon) \mu_1+\varepsilon \mu \mid \mu) \leq (1-\eps)H_W(\mu_1\mid\mu)\leq H_W(\mu_1\mid\mu) +\Ocal(\eps). \numberthis\label{entropyJensen} \]
This in turn follows from \cite[Lemmas 3.10, 3.11]{HJKP15}, which implies that, for any $k \in \mathbb N$, $\xi,\eta \in \mathcal{M}(W^k)$ with $\eta \neq 0$ and $\xi \ll \eta$, 
\[ \Big| H_{W^k}(\xi \mid \eta)- H_{W^k}((1-\varepsilon) \xi \mid \eta) \Big| \underset{\varepsilon \downarrow 0}{\asymp} \varepsilon.  \]
It follows that, as $\varepsilon\downarrow0$,
\begin{equation}\label{Ismall}
{\rm I}(\Psi^\varepsilon)+\gamma \mathrm S(\Psi^\varepsilon)+\beta \mathrm M(\Psi^\varepsilon)-\big[{\rm I}(\Psi)+\gamma \mathrm S(\Psi)+\beta \mathrm M(\Psi)\big]\leq \Ocal(\eps)+ \mu(A) \varepsilon \log \varepsilon,
\end{equation}
which is negative for all sufficiently small $\eps>0$. Thus, $\Psi$ is not a minimizer.

\item If $M(A)>0$ but $\mu_{m_1}(A)>0$, $\mu_{m_2}(A)>0$ and $\mu_{m_0}(A)=0$ for some $m_1<m_0<m_2$, then let $\nu_k^\varepsilon=\nu_k$ for all $k \in [\kmax]$ and let $\mu_{m_0}^\varepsilon=(1-\varepsilon)\mu_{m_0}+\varepsilon(\alpha_1 \mu_{m_1}+\alpha_2\mu_{m_2})$, $\mu_{m_1}^\varepsilon=(1-\alpha_1\varepsilon)\mu_{m_1}$, $\mu_{m_2}^\varepsilon=(1-\varepsilon\alpha_2)\mu_{m_2}$, where $\alpha_1,\alpha_2 \in (0,1)$ are such that $\alpha_1+\alpha_2=1$ and $m_1\alpha_1+m_2\alpha_2=m_0$. 
Then, $\Psi^\varepsilon$ is an admissible trajectory setting with $M^\varepsilon=M$. It follows similarly to the previous computation that $\mathrm I(\Psi^\varepsilon)+\gamma \mathrm S(\Psi^\varepsilon)+\beta \mathrm M(\Psi^\varepsilon)<\mathrm I(\Psi)+\gamma \mathrm S(\Psi)+\beta \mathrm M(\Psi)$ 
for all sufficiently small $\varepsilon>0$. However, instead of \eqref{epslogeps1}, we have
\[
\begin{aligned}
& \sum_{m=0}^{\infty} H_W(\mu_m^\varepsilon \mid \mu)-\mu_m^\eps(W)\log \frac{(\e\mu(W))^{-m}}{m!} \\ & \leq \sum_{m=0}^{\infty} H_{W}(\mu_m \mid \mu) -\mu_m(W)\log \frac{(\e\mu(W))^{-m}}{m!}+(\alpha_1\mu_{m_1}(A)+\alpha_2\mu_{m_2}(A))\eps\log\eps+\Ocal(\eps),
\end{aligned}
\]
as $\eps\downarrow0$.

\item If $M(A)>0$ but $\mu_0(A)=0$, let $\nu_k^\varepsilon=(1-\varepsilon)\nu_k$ for all $1<k\leq \kmax$, $\mu_m^\varepsilon=(1-\varepsilon)\mu_m$ for all $m>0$, $\mu_0^\varepsilon=\varepsilon\mu+ (1-\varepsilon)\mu_0$ and $\nu_1^\varepsilon=(1-\varepsilon)\nu_1+\varepsilon\mu$. 
It is again sufficient to consider the entropy terms in $\mathrm I$. 
The summands on $k>1$ can be estimated as follows.
\[ \begin{aligned}
\sum_{k=2}^{\kmax} H_{W^k}(\nu_k^\varepsilon \mid \mu \tensor (M^\eps)^{(k-1)})&=\sum_{k=2}^{\kmax} H_{W^k}((1-\varepsilon) \nu_k \mid (1-\varepsilon)^{k-1} \mu \tensor M^{k-1})\\
&\leq \sum_{k=2}^{\kmax} H_{W^k}(\nu_k \mid \mu \tensor M^{(k-1)}) +\Ocal(\eps). 
    \end{aligned}
   \] 
The summand for $k=1$ can be estimated with the help of \eqref{entropyJensen}.
For the summand for $m=0$, we have
\begin{align*}  
H_W(\mu_0^\varepsilon \mid \mu) & = H_{W\setminus A}((1-\varepsilon)\mu_0+\varepsilon \mu \mid \mu) + \mu(A)\eps \log \eps \\& \leq H_{W \setminus A} ((1-\varepsilon) \mu_0 \mid \mu)+\mu(A)\eps\log\eps  + \Ocal(\varepsilon)=H_W(\mu_0\mid\mu)+\mu(A) \eps\log\eps  + \Ocal(\varepsilon). 
\end{align*}
while the remaining sum is handled as follows.
\[ \begin{aligned} &\sum_{m=1}^{\infty} H_W(\mu_m^\varepsilon \mid \mu)-\mu_m^\eps(W)\log \frac{(\e\mu(W))^{-m}}{m!}\\ &=\sum_{m=1}^{\infty} H_W((1-\varepsilon)\mu_m \mid \mu)-\mu_m(W)\log \frac{(\e\mu(W))^{-m}}{m!}+\Ocal(\eps). \end{aligned} \] 
Thus, \eqref{Ismall} holds also here, which implies the claim.

\item If $M(A)>0$ but $\mu_{m_0}(A)=0$ for some $m_0 \geq \kmax$, let $\mu_{m_0}^{\varepsilon}=(1-\varepsilon)\mu_{m_0}+ \varepsilon M /m_0$, $\mu_m^\eps=(1-\varepsilon)\mu_m$ for $m \notin \{ 0,m_0 \}$, and $\nu_k^\varepsilon=\nu_k$ for all $k \in[\kmax]$. Then,
\[ \sum_{m=1}^{\infty} m \mu_m^{\varepsilon} = (1-\varepsilon) \sum_{m=1}^{\infty} m \mu_m+ \frac{\varepsilon m_0}{m_0} \sum_{k=1}^{\kmax} \sum_{l=1}^{k-1} \pi_l \nu_k =\sum_{k=1}^{\kmax} \sum_{l=1}^{k-1} \pi_l \nu_k. \] 
On the other hand, we have \[ \mu-\sum_{m=1}^{\infty} \mu_m^{\varepsilon} \geq \mu-(1-\varepsilon)\sum_{m=1}^{\infty} \mu_m -\frac{\varepsilon(\kmax-1)}{m_0}\mu \geq (1-\varepsilon)\mu -(1-\varepsilon)\sum_{m=1}^{\infty} \mu_m=(1-\varepsilon) \mu_0. \]
Therefore, if we put $\mu_0^{\varepsilon} = \mu - \sum_{m=1}^{\infty} \mu_m^{\varepsilon}$, then $\mu_0^{\varepsilon} \geq (1-\varepsilon) \mu_0 \geq 0$, in particular $\Psi^\varepsilon$ is an admissible trajectory setting.  Now we can proceed analogously to \eqref{mu0} to conclude that $\mathrm I(\Psi^\varepsilon)+\gamma \mathrm S(\Psi^\varepsilon)+\beta \mathrm M(\Psi^\varepsilon)<\mathrm I(\Psi)+\gamma \mathrm S(\Psi)+\beta \mathrm M(\Psi)$ for sufficiently small $\varepsilon>0$. 
\end{enumerate}
The proof of \eqref{nuksandwich} is very similar to the ones of \eqref{mumsandwich}, \eqref{mu0} and \eqref{mukmax}, thus we leave it to the reader.
\end{proof}

The proof of the following lemma is similar to the one of Lemma~\ref{lemma-everywherepositive}, therefore we omit its proof.
\begin{lemma}\label{lemma-densitybounds}
If $\kmax>1$ and $\Psi=((\nu_k)_{k=1}^{\kmax},(\mu_m)_{m=0}^{\infty})$ is a minimizer of \eqref{variation}, then for each $k \in [\kmax]$, 
there exists $c_k>0$ such that for all $A \subseteq W^k$ measurable, $\nu_k(A) \geq c_k \mu^{\tensor k}(A)$ holds. Similarly, for each $m \in \N_0$, there exists $c'_m>0$ such that for all $A' \subseteq W$ measurable, $\mu_m(A') \geq c'_m \mu(A')$ holds.
\end{lemma}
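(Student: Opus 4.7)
The plan is to mirror the contradiction strategy of Lemma~\ref{lemma-everywherepositive}: we will exploit that the scalar function $x\mapsto x\log x$ has derivative $-\infty$ as $x\downarrow 0$, so any perturbation that pushes $\Ocal(\varepsilon)$ extra mass into a region where the relevant Radon--Nikodym density is itself of order $\varepsilon$ will lower the integrand in the corresponding relative-entropy term by an amount of order $\varepsilon\log\varepsilon$. This singular decrease should dominate the $\Ocal(\varepsilon)$ changes in the linear functionals $\gamma\mathrm S$ and $\beta\mathrm M$ and in the remaining, regular pieces of $\mathrm I$, contradicting the supposed minimality of $\Psi$.

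Concretely, for the bound on $g_m:=\d\mu_m/\d\mu$ (which is well-defined and $\mu$-a.e.~strictly positive by Lemma~\ref{lemma-everywherepositive}), I would assume for contradiction that $g_m$ fails to be essentially bounded below, so that for every $\varepsilon\in(0,1)$ the set $A_\varepsilon:=\{g_m<\varepsilon\}\subseteq W$ has $\mu(A_\varepsilon)>0$. By Lemma~\ref{lemma-everywherepositive} and $\sigma$-additivity I can also select some $m'\neq m$ and a set $B\subseteq W$ of positive $\mu$-mass on which $g_{m'}\geq c_0>0$. Following the recipe of step~\eqref{mukmax} of Lemma~\ref{lemma-everywherepositive}, I would define $\Psi^\varepsilon$ by adding $\varepsilon\,\mathds{1}_{A_\varepsilon}\mu/\mu(A_\varepsilon)$ to $\mu_m$ and by subtracting and redistributing suitable multiples of $\mathds{1}_B\mu_{m'}$ and of a third index so that both (ii) and (iii) in \eqref{constraints} continue to hold; all $\nu_k$ are left alone, so (i) is automatic. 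Using the representation \eqref{I-minimizer} of $\mathrm I$, the singular contribution from $A_\varepsilon$ to the change of $\mathrm I+\gamma\mathrm S+\beta\mathrm M$ will be $\mu(A_\varepsilon)\,\varepsilon\log\varepsilon+\Ocal(\varepsilon)$, which is strictly negative for small $\varepsilon$, giving the desired contradiction.

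The bound on $f_k:=\d\nu_k/\d\mu^{\otimes k}$ will follow analogously: assuming $f_k$ is not essentially bounded below on some $A\subseteq W^k$ with $\mu^{\otimes k}(A)>0$, I would add a small multiple of $\mathds{1}_A\mu^{\otimes k}$ to $\nu_k$ and offset the resulting shifts in $\pi_0\nu_k$ and in $M=\sum_{k',l}\pi_l\nu_{k'}$ by perturbing some $\nu_{k'}$ with $k'\neq k$ on a region where $f_{k'}$ is bounded below (guaranteed by Lemma~\ref{lemma-everywherepositive}) and by adjusting some $\mu_m$'s to restore (ii). Once again the $\varepsilon\log\varepsilon$ term in the $\nu_k$-entropy will dominate the $\Ocal(\varepsilon)$ corrections.

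The hard part will be the combinatorial bookkeeping: building a one-parameter perturbation that preserves all three identities in \eqref{constraints} while confining the singular entropic gain to a single relative-entropy term. This is exactly the issue already navigated in the five steps of Lemma~\ref{lemma-everywherepositive}, and it is resolved by the same device—using the strict positivity of the untouched coordinates, granted by that lemma, to absorb the compensating mass and weighted-mass transfers at an overall cost of order $\varepsilon$.
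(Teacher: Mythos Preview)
Your strategy is exactly what the paper has in mind; it omits the proof with the remark that it is ``similar to the one of Lemma~\ref{lemma-everywherepositive}''. Two points in your execution would need repair, however.

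First, the normalization in your perturbation is off. You add $\varepsilon\,\mathds{1}_{A_\varepsilon}\mu/\mu(A_\varepsilon)$, i.e.\ total mass $\varepsilon$, and then claim an entropy gain of order $\mu(A_\varepsilon)\,\varepsilon\log\varepsilon$ against a compensation cost $\Ocal(\varepsilon)$. But $g_m>0$ $\mu$-a.e.\ by Lemma~\ref{lemma-everywherepositive}, so $\mu(A_\varepsilon)\to 0$ as $\varepsilon\downarrow 0$, and the claimed gain may well be $o(\varepsilon)$ and be swallowed by the $\Ocal(\varepsilon)$ cost. The fix is not to normalize: add $\varepsilon\,\mathds{1}_{A_\varepsilon}\mu$ instead. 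Then the new density on $A_\varepsilon$ lies in $(\varepsilon,2\varepsilon)$, convexity of $x\mapsto x\log x$ bounds the entropy change on $A_\varepsilon$ by $\mu(A_\varepsilon)\,\varepsilon(1+\log(2\varepsilon))$, the compensating mass is $\varepsilon\mu(A_\varepsilon)$ with cost $\Ocal(\varepsilon\mu(A_\varepsilon))$, and the net change is at most $\mu(A_\varepsilon)\big[\varepsilon\log\varepsilon+\Ocal(\varepsilon)\big]<0$.

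Second, constraint (ii) in \eqref{constraints} is an identity of \emph{measures} on $W$, not merely of total masses. Subtracting from $\mu_{m'}$ on a set $B$ that need not coincide with $A_\varepsilon$ cannot restore $\sum_m\mu_m=\mu$; the compensation must act on $A_\varepsilon$ itself. In the proof of Lemma~\ref{lemma-everywherepositive} this was achieved by global rescalings $\mu_{m'}\mapsto(1-\varepsilon)\mu_{m'}$ combined with adding $\varepsilon\mu$ to one coordinate, which automatically preserves (ii); you should proceed the same way here and then re-balance (iii) by a matching rescaling of the $\nu_k$'s, or by the two-index convex combination from step~\eqref{mumsandwich}. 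The analogous remark applies to your treatment of the $\nu_k$'s and constraint (i).
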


\subsection{Deriving the Euler--Lagrange equations} \label{sec-minimizerproof} 

In this section, we finish the proof of Proposition \ref{prop-minimizer}. According to the results of Section \ref{sec-positivity}, now we see that \eqref{variation} exhibits at least one minimizer, and all minimizers have almost everywhere positive Lebesgue density on the corresponding powers of $\supp~\mu$. Knowing this, we now carry out the perturbation analysis for the minimizer(s) of the optimization problem in \eqref{variation} and derive the shape of the minimizers in most explicit terms.

We use the method of Lagrange multipliers in the framework of a perturbation argument. Let $\Psi=((\nu_k)_{k=1}^{\kmax},(\mu_m)_{m=0}^{\infty})$ minimize \eqref{variation}. Fix any collection of signed measures $\Phi=((\tau_k)_{k=1}^{\kmax},(\sigma_m)_{m=0}^{\infty} )$ such that only finitely many $\sigma_m$'s are different from zero, the Radon--Nikodym derivative $\smfrac{\d \tau_k}{\d \mu^{\tensor k}}$ is a simple function for each $k$, also $\smfrac{\d \sigma_m}{\d \mu}$ is a simple function for each $m$, further they satisfy the following constraints:
\begin{equation}\label{perturbator}
(i) \quad\sum_{k=1}^{\kmax} \pi_0\tau_k=0,\qquad (ii)\quad \sum_{m=0}^{\infty} \sigma_m=0,\qquad (iii)\quad M_\Phi:=\sum_{m=0}^{\infty} m\sigma_m=\sum_{k=1}^{\kmax}\sum_{l=1}^{k-1}\pi_l\tau_k.
\end{equation}
Then it follows from Lemma~\ref{lemma-densitybounds} that, for any $\varepsilon\in\R$  with sufficiently small $|\eps|$, $\Psi+\varepsilon\Phi=((\nu_k+\eps \tau_k)_{k=1}^{\kmax},(\mu_m+\eps\sigma_m)_{m=0}^{\infty})$ is a collection of (non-negative!) measures that satisfies \eqref{constraints} and is therefore admissible in the variational formula in \eqref{variation}. That \eqref{constraints} is satisfied follows easily from \eqref{perturbator}. Furthermore, using the notation of Section~\ref{sec-positivity}, the non-negativity follows from the fact that each $\tau_k$ respectively each $\sigma_m$ is a finite linear combination of measures of the form $\1_A\,\d \mu^{\tensor k}$ with $A\subset W^k$ respectively of the form $\1_B\, \d \mu$ with $B \subset W$, and we have $\1_A\,\d \mu^{\tensor k} \leq c_k^{-1} \1_A\,  \nu_k$ respectively $\1_B\, \d \mu \leq c_m'^{-1} \1_B\, \d \mu_m $. Since only finitely many such summands are involved, there is a constant $C>0$ such that $|\tau_k|\leq C\nu_k$ and $|\sigma_m|\leq C\mu_m$ for any $k\in[\kmax]$ and $m\in\N_0$, and hence it suffices to take $|\eps|<1/C$.

From minimality, we deduce that
\begin{equation}\label{ELcondition}
     0=\frac{\partial}{\partial \varepsilon} \Big|_{\varepsilon=0} \Big(\mathrm I(\Psi+\varepsilon\Phi)+\gamma \mathrm S(\Psi +\varepsilon \Phi)+\beta \mathrm M(\Psi+\varepsilon \Phi) \Big).
\end{equation}
We calculate the latter two terms as 
$$
\frac{\partial}{\partial \varepsilon} \Big|_{\varepsilon=0} \Big(\gamma \mathrm S(\Psi +\varepsilon \Phi)+\beta \mathrm M(\Psi+\varepsilon \Phi) \Big)
=\gamma\sum_{k \in [\kmax]}\langle \tau_k, \widetilde f_k\rangle+\beta \sum_{m\in\N_0} \eta(m)\sigma_m(W),
$$
where, as before, we used  the notation $\langle \nu,f\rangle$ for the integral of a function $f$ with respect to a measure $\nu$. Abbreviating $M=\sum_{k \in [\kmax]}\sum_{l=1}^{k-1} \pi_l\nu_k$ and using the representation \eqref{I-proof} of $\mathrm I(\cdot)$, we see that
\begin{equation}\label{orthogonality1}
\begin{aligned}
 \frac{\partial}{\partial \varepsilon} \Big|_{\varepsilon=0} \mathrm I(\Psi+\varepsilon\Phi) &= \sum_{k\in[\kmax]} \Big\langle \tau_k, 1+\log\frac{\d\nu_k}{\d\mu^{\otimes k}}\Big\rangle- \Big\langle M_\Phi,1+\log \frac{\d M}{\d \mu}\Big\rangle +\sum_{m\in\N_0} \Big\langle \sigma_m,1+\log \frac{\d\mu_m}{\d(c_m \mu)}\Big\rangle,
\end{aligned}
\end{equation}
where we recall that $c_m=\e^{-1/(\e\mu(W))} (\e\mu(W))^{-m}/m!$.
Summarizing, we obtain from \eqref{ELcondition} that 
\begin{equation}
 0=\Big\langle \Phi,\big((h_k)_{k \in [\kmax]},(g_m)_{m\in\N_0}\big)\Big\rangle,
 \end{equation}
where
$$
h_k=\gamma \widetilde f_k+2-k+\log \frac{\d\nu_k}{\d(\mu\otimes M^{\otimes(k-1)})}\qquad\mbox{and}\qquad g_m=\beta \eta(m)+1+\log \frac{\d\mu_m}{\d\mu}-\log \frac{(\e \mu(W))^{-m}}{m!}.
$$
We conceive $\Phi$ as an element of the vector space
$$
\mathcal A_\pm =\prod_{k \in [\kmax]}\Mcal_\pm(W^k)\times \Mcal_\pm(W)^{\N_0}%\widehat{=}\Mcal\Big(\prod_{k<\kmax+1}W^k\times W^{\N_0}\Big)
$$
where $\Mcal_\pm$ is the set of signed measures equipped with the weak topology, and $((h_k)_{k\in [\kmax]},(g_m)_{m\in\N_0})$ as a function on $\prod_{k \in [\kmax]}W^k\times W^{\N_0}$. 
The condition in \eqref{perturbator} means that $\Phi$ is perpendicular to any function in
$$
\begin{aligned}
\Fcal&=\Big\{((\varphi_k)_{k\in[\kmax]},(\psi_m)_{m\in\N_0})\colon \varphi_k\colon W^k\to\R, \psi_m\colon W\to\R \text{ bounded and measurable for any }k,m,\\
&\qquad \exists \widetilde A,\widetilde B,\widetilde C\colon W\to\R\colon \varphi_k(x_0,\dots,x_{k-1})=\widetilde A(x_0)+\sum_{l=1}^{k-1}\widetilde C(x_l),\\
&\qquad\text{and }\psi_m(x)=\widetilde B(x)-m\widetilde C(x)\text{ for }x,x_0,\dots,x_{k-1}\in W\Big\}.
\end{aligned}
$$
We have shown that, if $\Phi$ is perpendicular to any simple function in $\Fcal$, then it is also perpendicular to $((h_k)_{k \in [\kmax]},(g_m)_{m\in\N_0})$. Since $\Fcal$ is a closed linear subspace of $\mathcal A_\pm$, it follows that it contains this element. That is, there are three functions $\widetilde A,\widetilde B,\widetilde C$ on $W$ such that, for any $k$ respectively $m$,
$$
h_k(x_0,\dots,x_{k-1})=\widetilde A(x_0)+\sum_{l=1}^{k-1}\widetilde C(x_l)\qquad \mbox{and}\qquad g_m(x)=\widetilde B(x)-m\widetilde C(x),\qquad x,x_0,\dots,x_{k-1}\in W.
$$
Using an obvious substitution, this is equivalent to the existence of three positive functions $A,B,C$ such that
\begin{eqnarray}
 \nu_k(\d x_0,\ldots, \d x_{k-1} ) &=& \mu(\d x_0)\, A(x_0)\prod_{l=1}^{k-1} \big(C(x_{l})M(\d x_l)\big) \e^{-\gamma \widetilde f_k(x_0,\ldots,x_{k-1})},\qquad k \in [\kmax],\label{nukminimizer}\\
 \mu_m(\d x) &=& \mu(\d x)\, B(x)\frac{(C(x)\mu(W))^{-m} }{m!}\e^{-\beta \eta(m)},\qquad m\in\N_0.\label{mumminimizer}
\end{eqnarray}
From (i) and (ii) in \eqref{constraints}, we can identify $A$ and $B$ as
\begin{eqnarray}\label{ABident}
\frac1{A(x_0)}&=&\sum_{k\in[\kmax]} \int_{W^{k-1}}\prod_{l=1}^{k-1} \big(C(x_{l})M(\d x_l)\big)\e^{-\gamma \widetilde f_k(x_0,\ldots,x_{k-1})},\\
\frac1{B(x)}&=&\sum_{m\in\N_0}\frac{(C(x)\mu(W))^{-m} }{m!}\e^{-\beta \eta(m)}.
\end{eqnarray}
Furthermore, condition (iii) says that
\begin{equation}\label{Ccondition}
\frac 1{C(x)}=\frac 1{C(x)}\frac{\mu(\d x)}{M(\d x)}\varphi\Big(\frac{1}{C(x)\mu(W)}\Big)=\Gamma(C\,\d M,x),\qquad x\in W,
\end{equation}
where $ \varphi(\alpha)=\sum_{m\in\N_0}m\frac{\alpha^{m} }{m!}\e^{-\beta \eta(m)}/\sum_{m\in\N_0}\frac{\alpha^{m} }{m!}\e^{-\beta \eta(m)}$ for $\alpha\in[0,\infty)$ and
\begin{equation}\label{Gammadef}
\Gamma(\d \widetilde M,x)=\int_W \mu(\d x_0)\,\frac{\sum_{k\in[\kmax]} \int_{W^{k-2}}\prod_{l=1}^{k-2} \widetilde M(\d x_l) \, F_k(x_0,x_1,\dots,x_{k-2}, x)} 
{\sum_{k\in[\kmax]} \int_{W^{k-1}}\prod_{l=1}^{k-1}\widetilde M(\d x_l)\,\e^{-\gamma \widetilde f_k(x_0,\dots,x_{k-1})}},
\end{equation}
where
\begin{equation}\label{Fdefinition}
F_k(x_0,x_1,\dots,x_{k-2}, x)=\sum_{l=1}^{k-1} \e^{-\gamma \widetilde f_k(x_0,y^{l})},
\end{equation}
$y^{l}$ is the vector of length $k-1$, consisting of $x_1,\dots,x_{k-2}$; augmented by $x$ at the $l$-th place, and $\widetilde M(\d x)=C(x)M(\d x)$. This ends our derivation of the Euler--Lagrange equations for any minimizer $\Psi$ of \eqref{variation}.

This description of $C$ and $M$ is rather implicit and involved, therefore we cannot offer any simple criterion for the uniqueness of the minimizers of \eqref{variation}. Also, the question of continuity of the tilting functions $A$, $B$ and $C$ is open.

Since $\mathrm I+\gamma \mathrm S + \beta \mathrm M$ is convex, it follows that any admissible trajectory setting $\Psi$ satisfying \eqref{nukminimizer}--\eqref{Fdefinition} is a minimizer of \eqref{variation}. 

\section{Proof of Proposition \ref{prop-variationbeta=0}}\label{sec-beta=0proof}

We proceed analogously to Sections~\ref{sec-proofingredients} and \ref{sec-prooflargelambda}, and thus we start with part \eqref{freeenergy-variationbeta=0}, i.e., with verifying \eqref{beta0variation}. We use the discretization argument from Section~\ref{sec-discretization} again. We now provide the definition of a \emph{transmission setting}, the analogue of Definition~\ref{defn-standardsetting} of a standard setting with no reference to users receiving given numbers of incoming hops.

\begin{defn} \label{defn-transmissionsetting}
A \emph{transmission setting} is a collection of measures  
\begin{equation}
\begin{aligned}
\underline{\Sigma}&=\Big( \Sigma=(\nu_k)_{k=1}^{\kmax}, ((\nu_k^{\delta})_{k=1}^{\kmax})_{\delta \in \mathbb B}, ((\nu_k^{\delta,\lambda})_{k=1}^{\kmax})_{\delta \in \mathbb B,\lambda>0}, (\mu^{\delta,\lambda})_{\delta \in \mathbb B,\lambda>0} \Big)
\end{aligned}
\end{equation}
such that for any $\delta,\delta'\in\mathbb B$, $\lambda>0$, $k\in [\kmax]$ and $s,s_0,\ldots,s_{k-1}=1,\ldots,\delta^{-d}$, respectively, $\nu_k \in \Mcal(W^k)$, and parts \eqref{muadmissible-standardsetting}, 
%\eqref{muconv-standardsetting}, \eqref{mudelta-standardsetting}, 
\eqref{i-standardsetting}, \eqref{nukconsistency-standardsetting} and \eqref{nukconv-standardsetting} 
%and \eqref{nukdelta-standardsetting} 
of Definition \ref{defn-standardsetting} hold.
\end{defn}
Recall that Definition~\ref{defn-transmissionsetting} implies parts \eqref{muconsistency-standardsetting}, \eqref{muconv-standardsetting}, \eqref{mudelta-standardsetting} and \eqref{nukdelta-standardsetting} of Remark~\ref{remark-standardsetting}. Further, it is easy to see that for any transmission setting $\underline\Sigma$, $\Sigma$ is an asymptotic routing strategy.

The following lemma describes the combinatorics of the choices of message trajectories in the system. We recall the empirical measures
$(R_{\lambda,k}(s))_{k\in[\kmax]}$ from \eqref{Rdef}. 

\begin{lemma}
Let $\underline \Sigma$ be a transmission setting. For $\delta \in \mathbb B$ and $\lambda>0$ let
\[ K^{\delta,\lambda}(\underline\Sigma)=\big\{ s \in \mathcal{S}_{\kmax}(X^\lambda): R_{\lambda,k}^\delta(s)=\nu_k^{\delta,\lambda} ~\forall k=1,\ldots,\kmax \big\}. \]
Then we have
$\# K^{\delta,\lambda}(\underline\Sigma)=N^1_{\delta,\lambda}(\underline\Sigma) \times N^4_{\delta,\lambda}(\underline\Sigma)$,
where $N^1_{\delta,\lambda}(\underline\Sigma)$ equals $N^1_{\delta,\lambda}(\underline\Psi)$ from \eqref{comb-multinomial} for any standard setting $\underline \Psi$ containing $\underline \Sigma$, and
\[ N^4_{\delta,\lambda}(\underline\Sigma)=\prod_{j=1}^{\delta^{-d}} (\lambda\mu^{\delta,\lambda}(W^\delta_j))^{\lambda \sum_{k=1}^{\kmax} \sum_{l=1}^{k-1}\pi_l\nu_k^{\delta,\lambda}(W^\delta_j)}. \]
\end{lemma}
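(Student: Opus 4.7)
The plan is to adapt the three-step counting argument of Lemma~\ref{lem-cardinality}, dropping its second and third steps (which enforced the constraints on the numbers of incoming hops) and replacing them by a single unconstrained choice of relay locations. Concretely, I would decompose the selection of a configuration $s \in K^{\delta,\lambda}(\underline\Sigma)$ into two independent stages: (A) for each user, choose which sequence of $\delta$-subcubes the trajectory passes through, subject to the constraint that $R_{\lambda,k}^\delta(s) = \nu_k^{\delta,\lambda}$ for every $k$; (B) for each hop that has already been assigned a target subcube in stage (A), pick the specific user in that subcube which serves as the relay.

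For stage (A), the argument is verbatim the one carried out in part (A) of the proof of Lemma~\ref{lem-cardinality}: in each $\delta$-subcube $W^\delta_i$ there are $\lambda\mu^{\delta,\lambda}(W^\delta_i)$ users, and they must be partitioned among the sequences $(W^\delta_i, W^\delta_{i_1}, \ldots, W^\delta_{i_{k-1}})$ with multiplicities $\lambda\nu_k^{\delta,\lambda}(W^\delta_i \times W^\delta_{i_1} \times \ldots \times W^\delta_{i_{k-1}})$; these partitions in different starting subcubes are independent, giving the multinomial product $N^1_{\delta,\lambda}(\underline\Sigma)$. Well-definedness of the multinomials follows from part~\eqref{i-standardsetting} of Definition~\ref{defn-standardsetting} (equivalently, Definition~\ref{defn-transmissionsetting}).

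For stage (B), observe that once stage (A) is fixed, for every subcube $W^\delta_j$ we know exactly how many hops land in $W^\delta_j$ as an $l$-th relay for some $l \in \{1,\ldots,k-1\}$, $k \in [\kmax]$, namely $\lambda \sum_{k=1}^{\kmax} \sum_{l=1}^{k-1} \pi_l \nu_k^{\delta,\lambda}(W^\delta_j)$, the $j$-th factor in the exponent of $N^4_{\delta,\lambda}(\underline\Sigma)$. Since in the present (congestion-free) setting there is no constraint at all on how many hops may arrive at any given user, each of these hops can independently be assigned to any of the $\lambda\mu^{\delta,\lambda}(W^\delta_j)$ users of $W^\delta_j$. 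The number of such assignments in $W^\delta_j$ is therefore $(\lambda\mu^{\delta,\lambda}(W^\delta_j))^{\lambda \sum_{k,l}\pi_l\nu_k^{\delta,\lambda}(W^\delta_j)}$, and taking the product over $j=1,\ldots,\delta^{-d}$ gives $N^4_{\delta,\lambda}(\underline\Sigma)$.

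Since any choice in (A) can be combined with any choice in (B) to produce a distinct element of $K^{\delta,\lambda}(\underline\Sigma)$, and every element of $K^{\delta,\lambda}(\underline\Sigma)$ arises in exactly one such way, multiplying the two cardinalities yields the claim. The only minor subtlety is the bookkeeping that identifies the total number of hops landing in $W^\delta_j$ as the sum $\sum_{k=1}^{\kmax}\sum_{l=1}^{k-1}\pi_l\nu_k^{\delta,\lambda}(W^\delta_j)$, which is a straightforward consequence of how the empirical measures $R_{\lambda,k}^\delta(s)$ record the positions of all intermediate hops; this is the only real calculation, and I do not anticipate any obstacle.
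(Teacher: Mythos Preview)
Your proposal is correct and follows essentially the same approach as the paper's own proof: a two-stage counting argument where stage~(A) reproduces part~(A) of Lemma~\ref{lem-cardinality} verbatim to give $N^1_{\delta,\lambda}(\underline\Sigma)$, and stage~(B) observes that, with no congestion constraint, each of the $\lambda\sum_{k,l}\pi_l\nu_k^{\delta,\lambda}(W^\delta_j)$ hops landing in $W^\delta_j$ may independently choose any of the $\lambda\mu^{\delta,\lambda}(W^\delta_j)$ users there, yielding $N^4_{\delta,\lambda}(\underline\Sigma)$. The paper's proof is essentially identical in structure and detail.
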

\begin{proof}
We proceed in two steps by counting first the trajectories, registering only the partition sets $W^\delta_i$ that they travel through, second, the choices of the relays for each hop in each partition set.
Since every choice in the two steps can be freely combined with the other one, the product of the two cardinalities is equal to the number of all trajectory configurations with the prescribed coarsened empirical measures.

\begin{enumerate}[(A)]
\item {\em Number of the transmitters of trajectories passing through given sequences of $\delta$-subcubes.} This is equal to the corresponding quantity in the proof of Lemma~\ref{lem-cardinality}, hence it equals $N^1_{\delta,\lambda}(\underline\Sigma)$.
\item {\em Number of assignments of the hops to the relays.} For each $i=1,\ldots,\delta^{-d}$, there are $\lambda \sum_{k=1}^{\kmax}\sum_{l=1}^{k-1} \pi_l\nu_k(W^\delta_i)$ incoming hops arriving to the relays in $W^\delta_i$ in total. Each incoming hop arriving at $W^\delta_i$ can choose any of the $\lambda \mu^{\delta,\lambda}(W^\delta_i)$ users as the corresponding relay. Such choices between different hops in $W^\delta_i$ are independent, moreover all the choices in $W^\delta_i$ are independent from all the choices in $W^\delta_j$ for $j \neq i$. It follows that the number of assignments equals $N^4_{\delta,\lambda}(\underline\Sigma)$.
\end{enumerate}
We also see that all the choices in the two parts are independent of each other, i.e., they can be freely combined with each other and yield different combinations. Hence, we arrived at the assertion.
\end{proof}
Using the arguments of the proof of Proposition \ref{prop-combinatorics}, the next lemma immediately follows.

\begin{lemma} \label{lemma-beta0combinatorics} Let $\underline \Sigma$ be a transmission setting. Then 
\[ \lim_{\delta\downarrow0}\lim_{\lambda\to\infty}\frac{1}{\lambda}\log\frac{\#
K^{\delta,\lambda}(\underline\Sigma)}{N^0_{\delta,\lambda}(\underline\Sigma)} = -\mathrm J(\Sigma)%=-\Big( \sum_{k=1}^{\kmax} H_{W^k}(\nu_k\mid\mu^{\tensor k}) + M(W)\log\mu(W)\Big) 
\in (-\infty,\infty] , \numberthis\label{newcombinatorics} \]
where  $N^0_{\delta,\lambda}(\underline\Sigma)$ equals $N^0_{\delta,\lambda}(\underline\Psi)$ from \eqref{N0def} for any standard setting $\underline \Psi$ containing $\underline \Sigma$. 
Moreover, if the r.h.s.~of \eqref{newcombinatorics} is finite, then
\[ \lim_{\delta\downarrow0}\lim_{\lambda\to\infty}\frac{1}{\lambda\log\lambda}\log\# K^{\delta,\lambda}(\underline \Sigma) = M(W)=\sum_{k=1}^{\kmax} (k-1)\nu_k(W). \] 
\end{lemma}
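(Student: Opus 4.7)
\begin{proofsect}{Proof plan}
The proof will follow the template of Proposition \ref{prop-combinatorics} very closely, but is considerably simpler because only the measures $\nu_k$ (and no $\mu_m$'s) are involved. The plan is to start from the factorization $\# K^{\delta,\lambda}(\underline\Sigma) = N^1_{\delta,\lambda}(\underline\Sigma)\cdot N^4_{\delta,\lambda}(\underline\Sigma)$ and write
\[
\frac{1}{\lambda}\log \frac{\#K^{\delta,\lambda}(\underline\Sigma)}{N^0_{\delta,\lambda}(\underline\Sigma)} = \frac{1}{\lambda}\log N^1_{\delta,\lambda}(\underline\Sigma) + \frac{1}{\lambda}\log\frac{N^4_{\delta,\lambda}(\underline\Sigma)}{N^0_{\delta,\lambda}(\underline\Sigma)},
\]
treating the two summands separately.

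For the first summand, I reuse the calculation \eqref{comb-largelambdastartingpoint} from the proof of Proposition \ref{prop-combinatorics} verbatim, since $N^1_{\delta,\lambda}(\underline\Sigma)$ coincides with $N^1_{\delta,\lambda}(\underline\Psi)$ of any standard setting containing $\underline\Sigma$. Stirling's formula and part \eqref{nukconv-standardsetting} of Definition \ref{defn-standardsetting} yield
\[
\lim_{\lambda\to\infty}\frac{1}{\lambda}\log N^1_{\delta,\lambda}(\underline\Sigma) = -\sum_{k=1}^{\kmax}\sum_{i_0,\ldots,i_{k-1}=1}^{\delta^{-d}}\nu_k^\delta(W^\delta_{i_0}\times\cdots\times W^\delta_{i_{k-1}})\log\frac{\nu_k^\delta(W^\delta_{i_0}\times\cdots\times W^\delta_{i_{k-1}})}{\mu^\delta(W^\delta_{i_0})}.
\]
For the second summand, no Stirling is needed: $N^4$ and $N^0$ are both pure products of powers. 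Using $\omega\in\Omega_1$, i.e. $N(\lambda)/\lambda\to\mu(W)$ and $\mu^{\delta,\lambda}(W^\delta_j)\to\mu^\delta(W^\delta_j)$, and summing the exponents via the marginal identity $\sum_{j=1}^{\delta^{-d}}\sum_{k=1}^{\kmax}\sum_{l=1}^{k-1}\pi_l\nu_k^\delta(W^\delta_j) = \sum_{k=1}^{\kmax}(k-1)\nu_k(W^k) = M(W)$, I obtain
\[
\lim_{\lambda\to\infty}\frac{1}{\lambda}\log\frac{N^4_{\delta,\lambda}(\underline\Sigma)}{N^0_{\delta,\lambda}(\underline\Sigma)} = \sum_{j=1}^{\delta^{-d}}\sum_{k=1}^{\kmax}\sum_{l=1}^{k-1}\pi_l\nu_k^\delta(W^\delta_j)\log \mu^\delta(W^\delta_j) \; - \; M(W)\log\mu(W).
\]

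Adding the two contributions, I mimic the cancellation in \eqref{Jexprate}: insert $\prod_{l=1}^{k-1}\mu^\delta(W^\delta_{i_l})$ into both numerator and denominator under the $\nu_k^\delta$-logarithm, split the logarithm, and interchange sums to rewrite the extra term as $\sum_{k,l,i_l}\pi_l\nu_k^\delta(W^\delta_{i_l})\log\mu^\delta(W^\delta_{i_l})$, which exactly cancels the first part of the $N^4/N^0$-contribution. What remains is
\[
-\lim_{\lambda\to\infty}\frac{1}{\lambda}\log\frac{\# K^{\delta,\lambda}(\underline\Sigma)}{N^0_{\delta,\lambda}(\underline\Sigma)} = \sum_{k=1}^{\kmax}\sum_{i_0,\ldots,i_{k-1}}\nu_k^\delta(\cdots)\log\frac{\nu_k^\delta(\cdots)}{\prod_{l=0}^{k-1}\mu^\delta(W^\delta_{i_l})} + M(W)\log\mu(W),
\]
which is the $\delta$-discretised relative entropy $\sum_k H_{W^k}(\nu_k^\delta\mid(\mu^\delta)^{\otimes k})$ plus $M(W)\log\mu(W)$. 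Letting $\delta\downarrow 0$, the martingale convergence result \cite[Prop.~(15.6)]{G11} gives $\sum_k H_{W^k}(\nu_k\mid\mu^{\otimes k}) + M(W)\log\mu(W)$. Finally, converting $H$ to $\mathcal H$ via $\mathcal H_V(\nu\mid\widetilde\nu)=H_V(\nu\mid\widetilde\nu)+\widetilde\nu(V)-\nu(V)$ and using $\sum_k\nu_k(W^k)=\mu(W)$ (from $\sum_k\pi_0\nu_k=\mu$) produces exactly $\mathrm J(\Sigma)$ as defined in \eqref{Jentropy}, the $k=1$ contribution $-\mu(W)+\mu(W)$ cancelling.

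For the second assertion, I proceed exactly as at the end of the proof of Proposition \ref{prop-combinatorics}: if $\mathrm J(\Sigma)<\infty$, then $\#K^{\delta,\lambda}(\underline\Sigma)=N^0_{\delta,\lambda}(\underline\Sigma)\cdot e^{O(\lambda)}$ (since both $N^1$ and $N^4/N^0$ are $e^{O(\lambda)}$), so on the $\lambda\log\lambda$-scale only $N^0$ matters and yields the limit $\sum_{k,l,i}\pi_l\nu_k(W)=\sum_k(k-1)\nu_k(W^k)=M(W)$. The only mildly delicate step is the arithmetic bookkeeping converting between $H$ and $\mathcal H$ and matching the constants $-\sum_{k=2}^{\kmax}\mu(W)^k$ in $\mathrm J$; I expect no real obstacle beyond this careful matching.
\end{proofsect}
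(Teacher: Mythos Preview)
Your proof is correct and follows exactly the approach the paper indicates (``using the arguments of the proof of Proposition~\ref{prop-combinatorics}''), carrying out in detail the Stirling computation for $N^1_{\delta,\lambda}$ and the elementary computation for $N^4_{\delta,\lambda}/N^0_{\delta,\lambda}$, with the same cancellation mechanism as in \eqref{Jexprate}. The arithmetic matching you flag as ``mildly delicate'' indeed works cleanly: $\sum_k \Hcal_{W^k}(\nu_k\mid\mu^{\otimes k}) = \sum_k H_{W^k}(\nu_k\mid\mu^{\otimes k}) - \mu(W) + \sum_{k=1}^{\kmax}\mu(W)^k$, and after subtracting $\sum_{k=2}^{\kmax}\mu(W)^k$ the remaining $\mu(W)-\mu(W)$ cancels, giving exactly $\sum_k H_{W^k}(\nu_k\mid\mu^{\otimes k}) + M(W)\log\mu(W)$.
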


Now, the identity in \eqref{beta0variation} follows from the proof of Theorem~\ref{theorem-variation1}, using transmission settings instead of standard settings and replacing Proposition~\ref{prop-combinatorics} by our Lemma \ref{lemma-beta0combinatorics}. There is one more major change in the proof. Indeed, instead of the compactness of $\lbrace \Psi \colon \Psi ~\text{adm.~trajectory setting},~ \mathrm M(\Psi) \leq y \rbrace$ for all $y \geq 0$ in Section~\ref{sec-varproof} and the fact that any level set of $\mathrm I+ \gamma \mathrm S + \beta \mathrm M$ is contained in a larger level set of $\mathrm M$, one shall use the following argument. Using that $\mathrm S$ is continuous on the set of asymptotic routeing strategies, and that $\mathrm J$ is lower semicontinuous, bounded from below and it has compact level sets \cite[Section 6.2]{DZ98}, it follows that each level set of $\mathrm J+\gamma \mathrm S$ is included in a larger level set of $\mathrm S$. Now, for all $y \in \mathbb R$, the set $\lbrace \Sigma \colon \Sigma\text{ asymptotic routeing strategy, } \mathrm S(\Sigma) \leq y \rbrace$ is compact, because it is closed and contained in the set $\lbrace \Sigma \colon \Sigma~ \text{asymptotic routeing strategy},~ \sum_{k=1}^{\kmax} k \nu_k(W^k) \leq y' \rbrace$ for all sufficiently large $y' \in \mathbb R$, and such sets are compact by Prohorov's theorem. These together allow us to conclude \eqref{beta0variation}.

%Note that the difficulty of the proof is substantially reduced by the fact that $(R_{\lambda,k}(\cdot))_{k \in[\kmax]}$ is a finite collection of measures, unlike  $((R_{\lambda,k}(\cdot))_{k \in[\kmax]},(P_{\lambda,m}(\cdot))_{m\in \mathbb N_0})$ considered in \cite{KT17}.

From this, parts \eqref{LDP-variationbeta=0} and \eqref{convergence-variationbeta=0} of Proposition \ref{prop-variationbeta=0} can be derived analogously to how Theorem~\ref{thm-LDP} was derived from Theorem~\ref{theorem-variation1} in Section~\ref{sec-LDPproof}. The additional fact that the rate function $\mathrm J+\mu(W) \log \kmax$ has compact level sets holds because relative entropies with respect to fixed reference measures have compact level sets \cite[Section 6.2]{DZ98}.

Lastly, we verify \eqref{minimizer-variationbeta=0}, i.e., we prove that \eqref{nukminimizerbeta=0} is the unique minimizer of \eqref{beta0variation}. The fact that the set of minimizers of the variational formula on the right-hand side of \eqref{beta0variation} is non-empty, compact and convex follows similarly to Lemma~\ref{lemma-existence}, again by Prohorov's theorem and the compactness of the sets $\lbrace \Sigma \colon \Sigma ~\text{asymptotic routeing strategy}, {\mathrm S}(\Psi) \leq y \rbrace$, $y>0$. Further, an argument analogous to Lemmas~\ref{lemma-everywherepositive} and \ref{lemma-densitybounds} shows that for all minimizers $\Sigma=(\nu_k)_{k \in[\kmax]}$, we have that $\nu_k \ll \mu^{\tensor k} \ll \nu_k$ and $\smfrac{\d \nu_k}{\d \mu^{\tensor k}}$ is bounded away from zero, for all $k \in [\kmax]$. Deriving the Euler--Lagrange equations similarly to Section~\ref{sec-minimizerproof}, it follows that \eqref{nukminimizerbeta=0}--\eqref{Adefnew} hold for any minimizer $\Sigma=(\nu_k)_{k \in [\kmax]}
$ of \eqref{beta0variation}. This also implies that the minimizer $\Sigma$ is unique. Thus, we 
conclude Proposition \ref{prop-variationbeta=0}. \ProofEnde

\appendix
\section{Representations of the entropy term}\label{sec-entropyrepresentation}
We defined the entropy term $\Psi \mapsto \mathrm I(\Psi)$ via the formula \eqref{quenchedentropy}, which we interpreted in Section~\ref{sec-Discussion-varformula}. It is easy to see that \eqref{quenchedentropy} is equivalent to the representation in \eqref{I-minimizer}, which we used for analytical investigations. Now we show that \eqref{quenchedentropy} is equivalent to \eqref{I-proof}, which arises from the combinatorics in Section~\ref{sec-asymptotics}.

Recall that for $k \in \N$ and $\xi,\eta \in \Mcal(W^k)$, we have $\mathcal H_{W^k}(\xi | \eta)=H_{W^k}(\xi |\eta) -\xi(W^k)+\eta(W^k)$. Further, for an admissible trajectory setting $\Psi= ((\nu_k)_{k \in[\kmax]},(\mu_m)_{m \in \N_0})$, recall the measure $M=\sum_{k=1}^{\kmax} \sum_{l=1}^{k-1}\pi_l\nu_k=\sum_{m=0}^{\infty} m \mu_m$.  Starting from the definition of $\mathrm I(\cdot)$, in \eqref{quenchedentropy}, we compute
\begin{align*}
 \mathrm I(\Psi)&=\sum_{k=1}^{\kmax} \mathcal H_{W^k}\big(\nu_k \mid \mu \tensor M^{\tensor (k-1)}\big)+ \sum_{m=0}^{\infty} \mathcal H_{W}(\mu_m \mid \mu c_m)+\mu(W) \Big( 1-\sum_{k=1}^{\kmax} M(W)^{k-1}\Big)-\frac1\e \\ 
&=  \sum_{k=1}^{\kmax} H_{W^k}\big(\nu_k \mid \mu \tensor M^{\tensor (k-1)}\big) - \sum_{k=1}^{\kmax} \nu_k(W^k) + \mu(W) \sum_{k=1}^{\kmax} M(W)^{k-1}  + \sum_{m=0}^{\infty} \mathcal H_{W}(\mu_m \mid \mu c_m) \\ & \qquad +\mu(W) \Big( 1-\sum_{k=1}^{\kmax} M(W)^{k-1}\Big)-\frac1\e \\ 
&=  \sum_{k=1}^{\kmax} H_{W^k}\big(\nu_k \mid \mu \tensor M^{\tensor (k-1)}\big) + \sum_{m=0}^{\infty} H_{W}(\mu_m \mid \mu c_m) -\frac1\e,
\end{align*}
where we used \eqref{mumentropymassidentity}, and the fact that $\sum_{k=1}^{\kmax} \nu_k(W^k)=\mu(W)$ by (i) in \eqref{constraints}. By the definition of the measure $M$, it suffices to show that 
\[ \sum_{k=1}^{\kmax} H_{W^k}\big(\nu_k \mid \mu \tensor M^{\tensor (k-1)}\big)  = \sum_{k=1}^{\kmax} H_{W^k} (\nu_k | \mu^{\tensor k})-H_W(M|\mu). \numberthis\label{2entropies} \]
Clearly, if any of the sides of \eqref{2entropies} is infinite, then so is the other side. Else, we verify \eqref{2entropies} as follows
\begin{align*}
\sum_{k=1}^{\kmax} &H_{W^k}\big(\nu_k \mid \mu \tensor M^{\tensor (k-1)}\big) 
%\\ = & \sum_{k=1}^{\kmax} \int_{W^k} \d \nu_k(x_0,\ldots,x_{k-1}) \log \frac{\d \nu_k}{\d \mu \tensor M^{\tensor (k-1) }}(x_0,\ldots,x_{k-1}) 
\\ = & \sum_{k=1}^{\kmax} \int_{W^k} \d \nu_k(x_0,\ldots,x_{k-1}) \Big[ \log \frac{\d \nu_k}{\d \mu^{\tensor k}}(x_0,\ldots,x_{k-1}) - \log \frac{\d (\mu \tensor M^{\tensor (k-1)})}{\d \mu^{\tensor k}}(x_0,\ldots,x_{k-1}) \Big]  
\\ = &  \sum_{k=1}^{\kmax} \int_{W^k} \d \nu_k(x_0,\ldots,x_{k-1}) \Big[ \log \frac{\d \nu_k}{\d \mu^{\tensor k}}(x_0,\ldots,x_{k-1}) - \log \Big( \prod_{l=1}^{k-1} \frac{\d M}{\d \mu}(x_l) \Big) \Big] 
\\ = & \sum_{k=1}^{\kmax} \int_{W^k} \d \nu_k(x_0,\ldots,x_{k-1})  \log \frac{\d \nu_k}{\d \mu^{\tensor k}}(x_0,\ldots,x_{k-1}) - \sum_{k=1}^{\kmax} \sum_{l=1}^{k-1} \int_{W} \pi_l\nu_k(\d x_l) \log \frac{\d M}{\d \mu}(x_l) 
\\ = & \sum_{k=1}^{\kmax} H_{W^k}(\nu_k | \mu^{\tensor k}) - H_W(M|\mu).
\end{align*}

\subsection*{Acknowledgements} The authors thank B. Jahnel, C. Hirsch, M. Klimm, S. Morgenstern and M. Renger for interesting discussions and comments. The second author acknowledges support by the Berlin Mathematical School (BMS) and the DFG RTG 1845 \lq Stochastic Analysis with Applications in Biology, Finance and Physics\rq.

\end{document}